\NewDocumentCommand{\aspectratio}{smo}
 {
  \hbox_set:Nn \l_tmpa_box {\includegraphics{#2}}
  \IfNoValueTF{#3}
   {
    \__student_aspectratio:nn { \box_wd:N \l_tmpa_box } { \box_ht:N \l_tmpa_box }
   }
   {
    \IfBooleanTF{#1}{ \tl_gset:Nx } { \tl_set:Nx } #3
     {
      \__student_aspectratio:nn { \box_wd:N \l_tmpa_box } { \box_ht:N \l_tmpa_box }
     }
   }
 }
\newcommand{\neworrenewcommand}[1]{\providecommand{#1}{}\renewcommand{#1}}
\newcommand{\zoomincludgraphic}[9]{
    \neworrenewcommand{\ffoo}[5]{
\begin{tikzpicture}[x=#1, y=#1, font=\footnotesize]
\aspectratio{#2}[\imsizeratio] 

  \node[anchor = south east, inner sep=0] (image) at (1,0) {\includegraphics[width=#1]{#2}};
	    \coordinate (viewport lower left) at (#3,#4/\imsizeratio);  
	    \coordinate(viewport upper right) at (#5,#6/\imsizeratio);  
        \draw[##5, line width = ##4 pt] (viewport lower left) rectangle (viewport upper right);
 
     \pgfmathsetmacro{\multone}{#5-#3}
     \pgfmathsetmacro{\multtwo}{#6/\imsizeratio-#4/\imsizeratio}
     
     \ifthenelse{\equal{#9}{bottom_left} }{ 
	      \node[anchor=north, draw= ##3, inner sep=0pt, line width = ##2 pt,outer sep=0pt] (zoomPart) at (\multone*#7/2+##2/345*1.333, \multtwo*#7+##2/345*1.333) {
	       \scalebox{#7}{\tikz{
	         \clip (#3,#4/\imsizeratio) rectangle (#5,#6/\imsizeratio);
	           
	         \node[anchor=south east, inner sep=0] at (1,0) {\includegraphics[width=#1]{#2}}; 
	         }}};
	   \ifthenelse{\equal{##1}{line_connection_on} }{ 
		  \draw[red, dashed] (viewport upper right|-viewport lower left) -- (zoomPart.north east); 
		  \draw[red, dashed] (viewport lower left) -- (zoomPart.north west);
		   }{}
	       
	 }{}

     \ifthenelse{\equal{#9}{bottom_right} }{ 
	      \node[anchor=north, draw= ##3, inner sep=0pt, line width = ##2 pt,outer sep=0pt] (zoomPart) at (1-\multone*#7/2-##2/345*1.333, \multtwo*#7 + ##2/345*1.333) {
	       \scalebox{#7}{\tikz{
			 \clip (#3,#4/\imsizeratio) rectangle (#5,#6/\imsizeratio);
	         \node[anchor=south east, inner sep=0] at (1,0) {\includegraphics[width=#1]{#2}}; 
	         }}%
	       };
		\ifthenelse{\equal{##1}{line_connection_on} }{ 
			  \draw[red, dashed] (viewport upper right|-viewport lower left) -- (zoomPart.south west); 
			  \draw[red, dashed] (viewport upper right) -- (zoomPart.north west);
		   }{}
     }{}  
       
    \ifthenelse{\equal{#9}{up_right} }{  
	      \node[anchor=north, draw= ##3, inner sep=0pt, line width = ##2pt, outer sep=0pt] (zoomPart) at (1-\multone*#7/2-##2/345*1.333,1/\imsizeratio-##2/345*1.333) {
	       \scalebox{#7}{\tikz{
	          \clip (#3,#4/\imsizeratio) rectangle (#5,#6/\imsizeratio);
	          \node[anchor=south east, inner sep=0] at (1,0) {\includegraphics[width=#1]{#2}}; 
	         }}%
	         
	       };
	   \ifthenelse{\equal{##1}{line_connection_on} }{ 
		  \draw[red, dashed] (viewport lower left|-viewport upper right) -- (zoomPart.south west);
		  \draw[red, dashed] (viewport upper right) -- (zoomPart.south east);
		   }{}
     }{}

     \ifthenelse{\equal{#9}{up_left} }{ 
	      \node[anchor=north, draw= ##3, inner sep=0pt, line width = ##2pt,outer sep=0pt] (zoomPart) at (\multone*#7/2+##2/345*1.333, 1/\imsizeratio-##2/345*1.333) {
	       \scalebox{#7}{\tikz{
	         \clip (#3,#4/\imsizeratio) rectangle (#5,#6/\imsizeratio);
	           
	          \node[anchor=south east, inner sep=0] at (1,0) {\includegraphics[width=#1]{#2}}; 
	         }}};
		   \ifthenelse{\equal{##1}{line_connection_on} }{ 
			  \draw[red, dashed] (viewport lower left|-viewport upper right) -- (zoomPart.south west);
			  \draw[red, dashed] (viewport upper right) -- (zoomPart.south east);
			   }{}
	     }{}

  	\ifthenelse{\equal{#8}{help_grid_on} }{ 
           \begin{scope}[
                x={(image.south east)},
                y={(image.north west)},
                font=\footnotesize,
                help lines,
                overlay
            ]
            
            \draw[help lines, xstep=.1,ystep=.1,overlay] (0,0) grid (1,1);
            \foreach \x in {0,1,...,9} { 
                \node[anchor=north] at (\x/10,0) {0.\x}; 
            }
            \foreach \y in {0,1,...,9} {
                \node[anchor=east] at (0,\y/10) {0.\y};
            }
        \end{scope}    
	}{}  
   
\end{tikzpicture}

    }
    \ffoo
}
\newcommand{\cmark}{\ding{51}}%
\newcommand{\xmark}{\ding{55}}%
\newcommand{\ie}{i.e.,}
\DeclareMathOperator*{\argmin}{\arg\!\min}
\newtheorem{assumption}{Assumption}
\newsavebox\CBox
\def\tBF#1{\sbox\CBox{#1}\resizebox{\wd\CBox}{\ht\CBox}{\textbf{#1}}}
\definecolor{olive}{rgb}{0.6, 0.6, 0.2}
\definecolor{sand}{rgb}{0.8666666666666667, 0.8, 0.4666666666666667}
\definecolor{wine}{rgb}{0.5333333333333333, 0.13333333333333333, 0.3333333333333333}
\definecolor{deblue}{RGB}{11,132,147}
\definecolor{ocra}{RGB}{204, 119, 34}
\newcommand{\fcircle}[2][red,fill=red]{\tikz[baseline=-0.5ex]\draw[#1,radius=#2] (0,0.03) circle ;}
\begin{document}

\title{Inertial Proximal Difference-of-Convex Algorithm with Convergent Bregman Plug-and-Play for Nonconvex Imaging}

\author{\name Tsz Ching Chow$^\dagger$ \email tcchow@math.cuhk.edu.hk \\
       \addr Department of  Mathematics\\
       The  Chinese  University of  Hong  Kong, Shatin,  Hong  Kong, China
       \AND
       \name Chaoyan Huang$^\dagger$ \email cyhuang@math.cuhk.edu.hk \\
       \addr Department of  Mathematics\\
       The  Chinese  University of  Hong  Kong, Shatin,  Hong  Kong, China
       \AND
       \name Zhongming Wu$^*$ \email wuzm@nuist.edu.cn \\
       \addr School of Management Science and Engineering\\
       Nanjing University of Information Science and Technology, Nanjing, China
        \AND
        \name Tieyong Zeng \email zeng@math.cuhk.edu.hk \\
       \addr Department of  Mathematics\\
       The  Chinese  University of  Hong  Kong, Shatin,  Hong  Kong, China
       \AND
       \name Angelica I. Aviles-Rivero
       \email  ai323@cam.ac.uk\\ \addr Department of Applied Mathematics and Theoretical Physics\\ University of Cambridge, Cambridge, UK  
       }


\maketitle

\begin{abstract}
Imaging tasks are typically tackled using a structured optimization framework. This paper delves into a class of algorithms for difference-of-convex (DC) structured optimization, focusing on minimizing a DC function along with a possibly nonconvex function. Existing DC algorithm (DCA) versions often fail to effectively handle nonconvex functions or exhibit slow convergence rates. We propose a novel inertial proximal DC algorithm in Bregman geometry, named iBPDCA, designed to address nonconvex terms and enhance convergence speed through inertial techniques. We provide a detailed theoretical analysis, establishing both subsequential and global convergence of iBPDCA via the Kurdyka-{\L}ojasiewicz property. Additionally, we introduce a Plug-and-Play variant, PnP-iBPDCA, which employs a deep neural network-based prior for greater flexibility and robustness while ensuring theoretical convergence. We also establish that the Gaussian gradient step denoiser used in our method is equivalent to evaluating the Bregman proximal operator for an implicitly weakly convex functional. We extensively validate our method on Rician noise and phase retrieval. We demonstrate that iBPDCA surpasses existing state-of-the-art methods.
\end{abstract}

\begin{keywords}
 nonconvex optimization, difference-of-convex algorithm, plug-and-play, Bregman denoiser, Rician noise, phase retrieval
\end{keywords}

\section{Introduction}
In this paper, we consider the following type of difference-of-convex (DC) composite optimization problem:
\begin{equation} \label{eq:DCA_model}
    \min_{{\bf x} \in \mathcal{X}} ~\left\{ \Psi({\bf x}):= f_1({\bf x}) - f_2({\bf x}) +g({\bf x}) \right\},
\end{equation}
where $\mathcal{X}\subseteq \mathbb{R}^n$ is a closed convex set with nonempty interior denoted by ${\rm int}(\mathcal{X})$, and $\mathbb{R}^n$ is a real finite dimensional Euclidean space.
$f_1:\mathbb{R}^n\rightarrow\mathbb{R}$ and $f_2:\mathbb{R}^n\rightarrow\mathbb{R}$ are both convex functions, and $g: \mathbb{R}^n\rightarrow (-\infty,\infty]$ is a proper closed (possibly nonconvex) function.
 We assume ${\rm dom}(g) \cap  \mathcal{X}$ is a nonempty closed set and $\Psi$ is bounded from below.

Some notable applications of Problem \eqref{eq:DCA_model} includes large-scale molecular optimization in computational biology \citep{le2014dca,ying2009enhanced}, machine learning \citep{bradley1998feature, yuille2003concave}, data mining \citep{gasso2009recovering,yin2015minimization}, signal and image processing  \citep{xiao2015convergence, lou2015weighted}. In this paper, we focus on designing efficient algorithms for Problem \eqref{eq:DCA_model} and their applications to the following nonconvex imaging problems.

\paragraph{Rician noise removal.}
The degradation model for the Rician noise removal problem \citep{chen2019variational,wu2022efficient} can be formulated as 
\begin{equation} \label{eq:Rician_noise_model}
    {\bf b} = \sqrt{(\mathcal{A}{\bf x}+\eta_1^2)+\eta_2^2}, \quad \eta_1 \sim \mathcal{N}(0,\sigma^2), \quad \eta_2 \sim \mathcal{N} (0,\sigma^2),
\end{equation}
where $\bf{x}$ represents ground truth, $\mathcal{A}$ is a forward operator, ${\bf b}$ represents image corrupted with Rician noise. The variables \(\eta_1\) and \(\eta_2\) are independent Gaussian noise, each following a normal distribution \(\mathcal{N}(0, \sigma^2)\) with zero-mean and variance \(\sigma^2\). Rician noise is more challenging to address than additive Gaussian noise because it is signal-dependent, whereas Gaussian noise is signal-independent.
Based on maximum a posterior (MAP) estimation, the nonconvex regularization model for Rician noise removal to recover ${\bf x}$ from noisy measurement ${\bf b}$ from Problem \eqref{eq:Rician_noise_model} reads
\begin{equation} \label{eq:map_Rician_model}
    \inf _{{\bf x}}\left\{F({\bf x})=\frac{1}{2 \sigma^2}\|\mathcal{A} {\bf x}\|^2-\left\langle\log \left(I_0\left(\frac{{\bf b}\mathcal{A} {\bf x}}{\sigma^2}\right)\right), \mathbf{1}\right\rangle+ \mu \phi({\bf {\bf x}})\right\},
\end{equation}
where $I_0$ is the modified Bessel function of the first kind with zeroth order \citep{gray1895treatise}, $\phi$ is the regularizing term (or prior term), and $\mu$ is the trade-off parameter between the data-fidelity term and prior term. 
The model of that~\eqref{eq:map_Rician_model} is an instance of Problem \eqref{eq:DCA_model} with 
\begin{equation} \label{eq:Rician_obj_split}
        f_1({\bf x}) := \frac{1}{2 \sigma^2}\|\mathcal{A} {\bf x}\|^2, ~ f_2({\bf x}):=\left\langle\log \left(I_0\left(\frac{{\bf b} \mathcal{A} {\bf x}}{\sigma^2}\right)\right), \mathbf{1}\right\rangle, ~ {\rm and}~ g({\bf x}) := \mu \phi({\bf x}).
\end{equation}
While the incorporation of a nonconvex plug-and-play learned prior $\phi({\bf x})$ may seem advantageous for enhancing recovery quality, existing DC-based methods face challenges in applying this model with guaranteed convergence or within a reasonable timeframe.

\paragraph{Phase retrieval.}
The phase retrieval process in imaging~\citep{fannjiang2020numerics} seeks to reconstruct the original image from the squared magnitude of its Fourier transformation~\citep{gerchberg1972practical}. This technique is utilized across diverse fields including X-ray crystallography~\citep{harrison1993phase, millane1990phase}, optics~\citep{walther1963question}, and diffraction imaging \citep{miao1999extending}. The degradation model applicable to the phase retrieval problem can be described as follows:
\begin{equation} \label{eq:phase_retrieval_model}
    {\bf d} = |\mathcal{K}{\bf x}|^2 + \omega,
\end{equation}
where  $ {\bf d} $ represents the noisy phaseless measurement. The measurement operator, $ \mathcal{K} \in \mathbb{C}^{m \times n} $, is defined by $ (\mathcal{K}{\bf x})[r] = \mathcal{F}(\mathcal{M}_r \odot {\bf x}) $, where $ \mathcal{M}_1, \mathcal{M}_2, \ldots, \mathcal{M}_r $ are diagonal matrices in $ \mathbb{R}^{m \times n} $ with modulation patterns on their diagonals. $ \mathcal{F} $ denotes the 2D Fourier transform, $ [r] $ indicates the $r$-th element (row) of its corresponding vector or matrix, and $ m $ specifies the number of measurements. Additionally, $ \omega $ signifies the additive noise, which may be either Gaussian or Poisson.
For additive white Gaussian noise $\omega \sim \mathcal{N}(0, 10^{-\frac{{\rm SNR}}{10}})$, the noise level is controlled by the signal-to-noise ratio (SNR) which is defined as ${\rm SNR} = 10 \log_{10} \left[{\left \Vert |\mathcal{K}{\bf x}|^2 \right\Vert^2} / {\left \Vert |\mathcal{K}{\bf x}|^2 - {\bf d} \right\Vert^2} \right ]$. For shot noise $\omega \sim \mathcal{N}\left(0,\alpha^2 |\mathcal{K} {\bf x}|^2\right)$, the sigma-to-noise ratio is controlled by $\alpha$. Following the setting from \citep{metzler2018prdeep,wei2022tfpnp}, the phase retrieval Problem~\eqref{eq:phase_retrieval_model} for both noise models can be expressed as 
\begin{equation} \label{eq:phase_retrieval}
    \inf_{{\bf x}} \left\{G({\bf x}) = \frac{1}{4}\Vert |\mathcal{K}{\bf x}|^2 -{\bf d} \Vert^2 + \varsigma \vartheta({\bf x}) \right\},
\end{equation}
where $ \vartheta({\bf x})$ is the regularizer term (or prior term), and $\varsigma$ is the trade-off parameter. 
We can observe that the model of that~\eqref{eq:phase_retrieval} exemplifies another instance of Problem \eqref{eq:DCA_model} with 
\begin{equation}\label{decompostion}
    f_1({\bf x}):= \frac{1}{4} \left\Vert |\mathcal{K}{\bf x}|^2\right \Vert^2 + \frac{1}{4} \Vert {\bf d} \Vert^2,~
    f_2({\bf x}):= \frac{1}{2} \left\langle {\bf d},|\mathcal{K}{\bf x}|^2\right\rangle, ~{\rm and}~ 
    g({\bf x}):= \varsigma \vartheta({\bf x}). 
\end{equation}
Note that $f_1$ does not have a globally Lipschitz continuous gradient, and $g$ may be nonconvex in this setting. It is essential to develop efficient and convergent methods to tackle these challenges.

Some existing studies concentrate on the solution methods for Problem \eqref{eq:DCA_model} or its specific instances, including the classical difference-of-convex algorithm (DCA) \citep{tao1997convex, an2005dc}, proximal DCA \citep{gotoh2018dc, wen2018proximal}, and the proximal gradient-based methods \citep{fukushima1981generalized,bolte2018first}. Nonetheless, these approaches typically require the global Lipschitz continuity of the gradient of $f_1$ or $f_1-f_2$, and/or the convexity of $g$.
On the other hand, although DCA encapsulates a wide range of applications in various fields, little attention has been paid to incorporating neural networks with DCA. Traditionally, model-based priors like variational priors are convex. However, when utilizing data-driven deep priors, such as pre-trained convolutional neural networks, the resulting deep priors often exhibit weak convexity \citep{hurault2022proximal, goujon2024learning}. Some might contend that a weakly convex prior can be integrated into the $f_1-f_2$ terms in Problem \eqref{eq:DCA_model}. However, this approach could undermine the intrinsic structure of the practical model. 
Additionally, this modification may not be viable within the framework of the Bregman distance-based methods. Above all, bundling deep prior with data fidelity term to enforce the use of DC-based methods
may compromise the denoising performance of off-the-shelf denoisers. Therefore, it is necessary to design some efficient methods to solve Problem \eqref{eq:DCA_model}, especially for the nonconvex Rician noise removal and phase retrieval problems in imaging.

\paragraph{Contributions.} In this paper, we propose a Bregman proximal DCA with inertial acceleration to address Problem \eqref{eq:DCA_model}, particularly in the setting of nonconvex $g$ and the absence of a globally Lipschitz continuous gradient for $f_1$. \textit{To the best of our knowledge, no previous studies have integrated DCA with a deep prior through the plug-and-play (PnP) framework while achieving fast convergence with inertial techniques.} Our work aims to address this research gap by conducting rigorous theoretical analysis and demonstrating practical applications of the proposed algorithms. Our main contributions are summarized next.

\fcircle[fill=wine]{2.5pt} We propose a novel inertial Bregman proximal DCA (iBPDCA) to solve Problem \eqref{eq:DCA_model}, which minimizes the sum of a DC function and a weakly convex function. This method incorporates a Bregman distance-based proximal term to broaden its applicability and employs an inertial step to accelerate practical convergence. Notably, the inertial step size is adaptively selected using a line search strategy based on two Bregman distances of the iterates. Theoretically, we establish the subsequential convergence of the method under $L$-smooth adaptable property, which is a less stringent condition than the global smoothness of $f_1$. We further establish the global convergence by leveraging the Kurdyka-{\L}ojasiewicz property. 
\vspace{0.2cm}

\fcircle[fill=wine]{2.5pt}  We extend the proposed iBPDCA by integrating deep priors as regularisers, resulting in PnP-iBPDCA.  This scheme can be seamlessly applied to a wide range of DC problems while leveraging the advantages of neural networks. Specifically, we perform the PnP method by replacing the Bregman proximal operator of $g$ with a common Gaussian denoiser. As a result, we eliminate the necessity of retraining the Bregman denoiser, which is usually tailored to each Legendre kernel. The adopted denoiser is established to be equivalent to evaluating the Bregman proximal operator of an implicit functional with weak convexity. The theoretical convergence of PnP-iBPDCA thus can be guaranteed.
\vspace{0.2cm}

\fcircle[fill=wine]{2.5pt} We apply the proposed iBPDCA and PnP-iBPDCA to address nonconvex Rician noise removal and phase retrieval problems in imaging. For Rician noise removal, the classical Euclidean norm is employed as the kernel function. In phase retrieval, we select a quartic kernel function. Additionally, we adopted a Gaussian denoiser corresponding to the Bregman proximal operator of a weakly convex implicit functional, applicable to both kernel functions and beyond. We also rigorously verify the theoretical convergence of PnP-iBPDCA in both applications.
\vspace{0.2cm}

\fcircle[fill=wine]{2.5pt} In our Rician noise removal experiments, we demonstrate the advantages of incorporating inertial techniques into DC methods. Notably, PnP-iBPDCA operates up to twice as fast as PnP-BPDCA while maintaining high computational efficiency. Moreover, it outperforms existing state-of-the-art methods in both visual and quantitative metrics for phase retrieval and Rician noise removal. Our proposed approaches leverage mathematical models to tackle these issues, providing high explainability and theoretical convergence. They can handle a broad spectrum of DC problems and forward operators without the need for extensive training. In contrast, specialized neural networks often lack transparency, achieve only empirical convergence, and require problem-specific training.
\vspace{0.2cm}

\paragraph{Organization.} The remainder of this paper is organized as follows: Section~\ref{sec:related_work} reviews related work in DCA, plug-and-play methods, and inertial acceleration in proximal algorithms. Section~\ref{sec:alg} outlines our proposed iBPDCA and its convergence results, with detailed proofs provided in Appendix~\ref{app:proof}. In Section~\ref{sec:pnp_alg}, we introduce a novel plug-and-play approach in Bregman geometry using a common Gaussian denoiser. Alongside this, we employ our proposed Bregman PnP framework and iBPDCA, leading to PnP-iBPDCA. Section~\ref{sec:experiments} describes the application of PnP-iBPDCA to Rician noise removal and phase retrieval in nonconvex imaging problems. Conclusions follow in Section~\ref{sec:conclusions}. 

\newpage
\section{Related Work} \label{sec:related_work}
In this section, we review the foundational and recent developments in Difference-of-Convex (DCA) algorithms, Plug-and-Play methods, and inertial acceleration in proximal algorithms. We explore how these areas influence our proposed iBPDCA approach, contextualizing our work within the broader optimization field.

\subsection{Difference-of-Convex Algorithm}
The classical difference-of-convex algorithm (DCA) \citep{tao1997convex, tao1998dc, an2005dc} has been extensively studied in the context of nonconvex optimization. DCA is originally developed for a specific instance of Problem \eqref{eq:DCA_model} with \( g= 0 \), which compute a subgradient \( \xi^k \in \partial f_2({\bf x}^k) \) and updates the next iterate \( {\bf x}^{k+1} \) as follows
\begin{equation} \nonumber
    {\bf x}^{k+1} = \argmin_{{\bf x} \in \mathcal{X}} \left\{ f_1({\bf x}) - \left\langle \xi^k, {\bf x}^k \right\rangle\right\}.
\end{equation}
Since then, numerous efforts have been made to broaden its applications and enhance its efficiency. We refer readers to \cite{le2018dc} for a comprehensive review of DCA. Notably, \cite{gotoh2018dc} introduced a proximal variant of DCA (PDCA), which is for Problem~\eqref{eq:DCA_model} with a convex $g$. The iterative scheme of PDCA can be read as  
\begin{equation} \nonumber
    {\bf x}^{k+1} = \argmin_{{\bf x} \in \mathcal{X}} \left\{ g({\bf x}) - \left\langle \nabla f_1({\bf x}^k) - \xi^k, {\bf x} \right\rangle +\frac{L}{2} \Vert {\bf x} - {\bf x}^k\Vert^2\right\},
\end{equation}
where $\xi^k \in \partial f_2({\bf x}^k)$, and $L>0$ is the global Lipschitz modulus of $\nabla f_1$. The PDCA can be seen as a generalization of classic DCA and proximal gradient (PG) method \citep{fukushima1981generalized}. There are many efforts have been made to enhance the performance of PDCA, including nonmonotone-enhanced PDCA \citep{lu2019nonmonotone, lu2019enhanced} and inexact PDCA \citep{yang2021inexact, yang2024inexact, nakayama2024inexact}.

Classical DCA often restricts the convex $f_1$ to the global Lipschitz gradient continuity assumption, similar to the classical PG method. \cite{bauschke2017descent} relaxed the requirement of global Lipschitz continuity of $\nabla f_1$ by introducing the concept of $L$-smooth adaptable property through Bregman distances. This lays the groundwork for the development of the two-sided extended descent lemma (known as extended descent lemma), and the Bregman proximal gradient (BPG) algorithm \citep{bolte2018first}. Building upon this, \cite{phan2023difference} developed DCAe to accommodate nonconvex function without Lipschitz continuous gradient. 
More specifically, they reformulated the objective function \eqref{eq:DCA_model} as $\Psi({\bf x}) = J({\bf x}) - H({\bf x})$ by introducing $J({\bf x}) = L h({\bf x}) + f_1({\bf x})$ and $H({\bf x}) = L h({\bf x}) - g({\bf x}) +f_2({\bf x})$, where $h$ is a Legendre kernel and $L>0$ is the adaptive Lipschitz smoothness modulus of pair $(g,h)$. Hence, their algorithm can be written as
\begin{equation} \label{eq:DCAe_iter_scheme}
    {\bf x}^{k+1} = \argmin_{{\bf x} \in \mathcal{X}} \left \{ J({\bf x}) - \left \langle L h({\bf y}^k) - \nabla g({\bf y}^k) + { \xi}^k , {\bf x}\right \rangle\right\},
\end{equation}
where ${ \xi}^k \in \partial f_2({\bf x}^k)$, and ${\bf y}^k = {\bf x}^k + \beta_k ({\bf x}^k -{\bf x}^{k-1})$ with $\beta_k$ being an extrapolation step-size. 
Since the scheme \eqref{eq:DCAe_iter_scheme} cannot be reformulated as a Bregman proximal operator or a classical proximal operator, this hinders the application of the deep PnP method. Later, \cite{takahashi2022new} introduced the Bregman proximal DCA (BPDCA), which extended PDCA \citep{wen2018proximal} to Bregman geometry. However, their proposed algorithm has slow convergence without the extrapolation technique while the extrapolation requires $g$ to be convex.

\subsection{Plug-and-Play Method}
Plug-and-Play (PnP) method is an effective learning-to-optimize approach \citep{chen2022learning}, which integrates a pre-trained neural network into part of the iteration of an optimization algorithm. 
In particular, the PnP method replaces the proximal operator with implicit denoising prior (e.g. pretrained neural network) in proximal splitting algorithms. For instance, the iterative schemes of the PG method and PnP-PG method for $\min_{{\bf x}\in\mathbb{R}^n} f({\bf x})+g({\bf x})$ can be read as
\begin{equation*}
   {\bf x}^{k+1} = {\rm prox}_{\lambda g}({\bf x}^k-\lambda \nabla f({\bf x}^k)),   
\end{equation*}
and 
\begin{equation*}
   {\bf x}^{k+1} = \mathcal{D}_{\gamma}({\bf x}^k-\lambda \nabla f({\bf x}^k)),   
\end{equation*}
respectively. The proximal operator of $\lambda g$ is defined as ${\rm prox}_{\lambda g}({\bf y})=\argmin_{{\bf x}\in\mathbb{R}^n}\{g({\bf x})+\frac{1}{2\lambda}\|{\bf x}-{\bf y}\|^2\}$ for any ${\bf y}\in\mathbb{R}^n$ and $\lambda>0$. Additionally, $\mathcal{D}_{\gamma}$ represents a pre-trained deep denoiser corresponding to ${\rm prox}_{\lambda g}$ with $\gamma=\sqrt{\lambda}$ be the input noise level of off-the-shelves Gaussian denoiser.

PnP method was first introduced by \cite{venkatakrishnan2013plug} and was based on the alternating direction method of the multipliers (ADMM) algorithm. Since then, PnP framework has been widely used in various splitting algorithms such as half-quadratic splitting \citep{zhang2017learning,zhang2021plug}, forward-backward splitting  \citep{sreehari2016plug,tirer2018image}, Douglas-Rachford splitting \citep{buzzard2018plug}, and proximal gradient descent  \citep{terris2020building}. At that time, these algorithms achieved state-of-the-art results in some practical problems such as ill-posed image restoration problems \citep[see][]{buzzard2018plug, zhang2021plug, wei2022tfpnp, wu2024extrapolated}. However, most of the aforementioned methods have no theoretical convergence guarantee.
The problem of lack of theoretical convergence in PnP method was first addressed by \cite{chan2016plug}. They have attempted to impose a bounded denoiser assumption; however, there is no assurance that the solution is a minimum or critical point of any function. Later, \cite{ryu2019plug} proposed to apply real spectral normalization for each convolutional layer to encourage non-expansiveness and convergence; but their result did not apply to the lack of strong-convexity data-fidelity terms. Others have attempted to establish convergence by restricting to nonexpansive denoisers \citep{sun2019online, reehorst2018regularization}, but its performance deteriorated empirically \citep{romano2017little, zhang2021plug} due to a lack of $1$-Lipschitz continuity in off-the-shelf denoisers. 

Most recently, \cite{hurault2022gradient} tackled the above issues by training a deep gradient-based denoiser. The provable convergence of PnP-HQS was demonstrated without compromising denoising performance. Subsequently, they found that the proposed prior was associated with a proximal operator of a weakly-convex function \citep{hurault2022proximal} and further extended their convergence result to PnP-FBS, PnP-ADMM, and PnP-DRS. Later, \cite{hurault2024convergent} extended the framework to Bregman geometry and applied it to the Poisson inverse problem. 
However, several challenges hinder us from applying the PnP framework to the existing DC algorithms. First, the DC algorithm usually requires function $g$ to be convex; while the gradient-based denoiser associates the proximal operator with a weakly-convex function. Second, some DC algorithms cannot formulate their subproblems as the evaluation of the proximal operator, which is incompatible with the PnP framework.

\begin{table}[t!] 
    \centering
    \begin{tabular}{c|ccccc}
    \hline 
        \rowcolor[HTML]{EFEFEF} \textsc{Methods} & $g$ & Inertial & Bregman & PnP & Condition for $f_1$\\ \hline
        iDCA & Vanish & \cmark & \xmark & \xmark & Global smooth \\ 
        ADCA  & Vanish & \cmark & \xmark & \xmark & Global smooth \\ 
        Boosted DCA &  Vanish & \cmark & \xmark & \xmark & Global smooth \\ 
        PDCA &   Vanish & \xmark & \xmark & \xmark  & Global smooth\\ 
        PDCAe  &   Convex & \cmark & \xmark & \xmark  & Global smooth\\ 
        DCAe  & $L$-smad & \cmark & \cmark & \xmark & Convex\\ 
        BPDCA   &  Nonconvex & \xmark & \cmark & \xmark & $L$-smad\\ 
        BPDCAe  &   Convex & \cmark & \cmark & \xmark  & $L$-smad\\ 
        \rowcolor[HTML]{D4FFD3}
        iBPDCA (Ours) &  Nonconvex & \cmark & \cmark & \cmark& $L$-smad \\ \hline
    \end{tabular}
    \caption{Comparison with existing DC-based methods regarding the property of $g$, the use of inertial acceleration techniques (inertial), the incorporation of Bregman distance-based proximal terms (Bregman), the integration of deep plug-and-play approach (PnP), and the convergence conditions for $f_1$.}
    \label{tab:comparsion_DCA}
\end{table}

\subsection{Acceleration in Proximal Algorithm}
Accelerating convergence without a significant increase in computational costs is highly desirable in both convex and nonconvex optimization. One popular strategy is to incorporate an inertial force, also known as extrapolation, into the iterative scheme.
This strategy combines iterates from the previous two iterations and updates the current iterate.
Specifically, one can add an inertial force with an extrapolation parameter $\alpha_k$ to the current iterate ${\bf x}^k$ using $\alpha_k({\bf x}^k-{\bf x}^{k-1})$ for the update of the next iterate ${\bf x}^{k+1}$, where ${\bf x}^{k-1}$ is the previous iterate.
Some well-known examples belonging to this type of acceleration method include the heavy-ball method  \citep{polyak1964some} and Nesterov’s acceleration techniques \citep{nesterov1983method, nesterov2007dual, nesterov2013gradient,nesterov2013introductory}.

In terms of accelerating DCA, \cite{aragon2018accelerating} proposed the boosted DCA, which incorporates the algorithm proposed by \cite{fukushima1981generalized}, also known as backtracking line search with Armijo condition. Later, \cite{aragon2020boosted} proved the global convergence of boosted DCA and extended their framework to the difference between two possibly nonsmooth functions. Moreover, \cite{wen2018proximal} and \cite{takahashi2022new} both adapted extrapolation parameters from FISTA \citep{beck2009fast} with a restart scheme. Their algorithms are referred to as PDCAe and BPDCAe, respectively. Even though BPDCAe tends to have faster convergence than its counterpart without extrapolation \citep[see][Tables 1 and 2]{takahashi2022new}, BPDCAe requires $g$ to be convex. To the best of our knowledge, integrating acceleration
strategies and PnP approach to the general DCA framework have not been studied in the literature.

Motivated by the aforementioned, we propose an inertial proximal DCA and integrate the Plug-and-Play (PnP) approach.  Our method offers a Bregman PnP solution that guarantees convergence for the general DC problem. 
To contextualize our approach, we compare existing methods to solve the DC problem with ours in Table~\ref{tab:comparsion_DCA}. The comparison methods include iDCA \citep{de2019inertial},  ADCA \citep{nhat2018accelerated,le2021novel}, Boosted DCA \citep{aragon2018accelerating,aragon2020boosted}, PDCA \citep{gotoh2018dc}, PDCAe \citep{wen2018proximal}, DCAe \citep{phan2023difference}, BPDCA and BPDCAe \citep{takahashi2022new}.

\section{Inertial Bregman Proximal DC Algorithm} \label{sec:alg}
In this section, we first review the notation and key results in Bregman geometry. Next, we introduce the proposed inertial Bregman proximal DC algorithm (iBPDCA), followed by a theoretical convergence analysis towards a stationary point of Problem \eqref{eq:DCA_model}. 

\subsection{Notations and Preliminaries}
Throughout the paper, we represent scalars, vectors, and matrices using lowercase letters, bold lowercase letters, and uppercase letters, respectively. We use $\mathbb{R},~\mathbb{R}_+,~\mathbb{R}^n$ and $\mathbb{R}^{m \times n}$ to represent the set of real numbers, non-negative real numbers, $n$-dimensional real vectors, and $m\times n$ real matrices, respectively. For a real matrix $M \in \mathbb{R}^{m \times n}$, we denote $M^{\top}$ as the transpose of $M$, and $\lambda_{\min}(M)$ and $\lambda_{\max}(M)$ as the minimal and maximal eigenvalues of $M$, respectively. For the set of complex numbers, n-dimensional complex vectors, and $m \times n$ complex matrices, we denote them as $\mathbb{C}, ~\mathbb{C}^n$ and $\mathbb{C}^{m \times n}$, respectively. For a given matrix $\mathcal{Z} \in \mathbb{C}^{m \times n}$, $\mathcal{Z}^{\dagger}$ represents its conjugate transpose matrix. 
We use $I_d$ to denote the identity mapping. We use $\left \langle \cdot, \cdot \right \rangle$ and $\Vert \cdot \Vert$ to denote the inner product and norm induced from the inner product. We use $\circ$ to stand for composition operator, and $\cdot$ to represent dot product.

For an extended real-valued function $f$, the domain of $f$ is defined as ${\rm dom}(f):=\{{\bf x}\in\mathbb{R}^n\;|\;f({\bf x})<\infty\}$. A function $f$ is proper if $ {\rm dom}  f \neq \emptyset$ and $f({\bf x}) > -\infty$ for any $x \in  {\rm dom} (f)$, and is closed if it is lower semicontinuous. For any subset $S \subseteq \mathbb{R}^{n}$ and any point ${\bf x}\in \mathbb{R}^{n}$, the distance from ${\bf x}$ to $S$ is defined by ${\rm dist}({\bf x},S):= \inf\left\{\|{\bf y}-{\bf x}\|\; \big| \; {\bf y}\in S\right\}$, and ${\rm dist}({\bf x},S)=\infty$ when $S=\emptyset$.

\paragraph{Bregman geometry.} 
The Bregman distance \citep{bregman1967relaxation} is a proximity measure commonly used to relax the global Lipschitz continuity assumption in first-order methods \citep{bolte2018first}. First, we review the proximity measure that is commonly referred to as Bregman distance \citep{bregman1967relaxation}.

\begin{definition} {\rm ({\bf Bregman distance})} \label{def:breg_dist} For a proper lower semicontinuous convex function $h:\mathbb{R}^n \rightarrow (-\infty, +\infty]$, which is known as kernel function, the Bregman distance $D_h: {\rm dom}(h) \times  {\rm int}{\rm dom}(h) \rightarrow \mathbb{R}_+$ is defined by 
\[ D_h({\bf x},{\bf y}) := h({\bf x}) - h({\bf y}) - \langle \nabla h({\bf y}), {\bf x} - {\bf y} \rangle.\]
\end{definition}

Through gradient inequality of the convex function, we can deduce that $h$ is convex if and only if $D_h(\mathbf{x}, \mathbf{y}) \geq 0,~\forall ~ \mathbf{x} \in  {\rm dom}(h),~ \mathbf{y} \in  {\rm int}{\rm dom}(h)$. Equality holds if and only if $\mathbf{x}=\mathbf{y}$ from strict convexity of $h$. When $h$ is $\kappa$-strongly convex (\ie\  $\nabla^2 h({\bf x}) \succeq \kappa I_d \succ 0, \forall {\bf x} \in  {\rm dom}(h)$), we have $D_h({\bf x},{\bf y}) \geq \frac{\kappa}{2}\Vert {\bf x} - {\bf y} \Vert ^2$. An indispensable property in Bregman geometry is the three-point identity \cite[Lemma 3.1]{chen1993convergence} outlined in the following lemma.
\begin{lemma} \label{lem:three-point-identity}{\rm{({\bf Three-point identity})}} 
    For any ${\bf y}, {\bf z} \in  {\rm int}{\rm dom}(h)$, ${\bf x} \in  {\rm dom}(h)$, we have
    \[
    D_h({\bf x}, {\bf z}) - D_h({\bf x}, {\bf y}) - D_h({\bf y}, {\bf z}) = \langle \nabla h({\bf y}) - \nabla h({\bf z}), {\bf x} - {\bf y} \rangle.
    \]
\end{lemma}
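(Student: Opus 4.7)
The plan is to prove the three-point identity by direct algebraic expansion of all three Bregman distances using Definition~\ref{def:breg_dist}, followed by cancellation of repeated terms. Since ${\bf y},{\bf z}\in{\rm int}{\rm dom}(h)$, the gradients $\nabla h({\bf y})$ and $\nabla h({\bf z})$ are well defined, so each of $D_h({\bf x},{\bf z})$, $D_h({\bf x},{\bf y})$, $D_h({\bf y},{\bf z})$ can be written explicitly as a sum of function values of $h$ and a linear-in-the-first-argument term. No convexity, strict convexity, or Lipschitz hypothesis beyond what is already assumed in the definition of $D_h$ will be needed; this identity is purely algebraic.

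First I would substitute the definition into the left-hand side to obtain
\[
\bigl[h({\bf x})-h({\bf z})-\langle\nabla h({\bf z}),{\bf x}-{\bf z}\rangle\bigr]
-\bigl[h({\bf x})-h({\bf y})-\langle\nabla h({\bf y}),{\bf x}-{\bf y}\rangle\bigr]
-\bigl[h({\bf y})-h({\bf z})-\langle\nabla h({\bf z}),{\bf y}-{\bf z}\rangle\bigr].
\]
The $h({\bf x})$ terms cancel immediately, and the two occurrences of $h({\bf y})$ and the two occurrences of $h({\bf z})$ cancel as well, leaving only the three inner-product contributions
\[
-\langle\nabla h({\bf z}),{\bf x}-{\bf z}\rangle+\langle\nabla h({\bf y}),{\bf x}-{\bf y}\rangle+\langle\nabla h({\bf z}),{\bf y}-{\bf z}\rangle.
\]

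Next I would collect the two terms involving $\nabla h({\bf z})$. By linearity of the inner product,
\[
-\langle\nabla h({\bf z}),{\bf x}-{\bf z}\rangle+\langle\nabla h({\bf z}),{\bf y}-{\bf z}\rangle
=\langle\nabla h({\bf z}),{\bf y}-{\bf x}\rangle,
\]
so the whole expression becomes
$\langle\nabla h({\bf y}),{\bf x}-{\bf y}\rangle+\langle\nabla h({\bf z}),{\bf y}-{\bf x}\rangle
=\langle\nabla h({\bf y})-\nabla h({\bf z}),{\bf x}-{\bf y}\rangle,$
which is exactly the desired right-hand side.

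There is essentially no main obstacle: the statement is a bookkeeping identity and the only point that needs any care is verifying that $\nabla h$ exists at ${\bf y}$ and ${\bf z}$, which is precisely why the hypothesis ${\bf y},{\bf z}\in{\rm int}{\rm dom}(h)$ is imposed. In particular, no appeal to subgradients, duality, or the Legendre property is needed at this stage.
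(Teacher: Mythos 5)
Your proof is correct, and the algebraic expansion and cancellation you carry out is exactly the standard argument; the paper itself does not reprove this identity but simply cites it from Chen and Teboulle (Lemma 3.1 of the 1993 reference), where the same direct computation is used. Nothing further is needed.
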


Unlike \cite{bolte2018first, bauschke2017descent}, we employ a relaxed notion of relative smoothness by imposing the condition solely on a restricted set $\mathcal{X}$. As discussed in \cite{yang2021inexact}, this definition of relative smoothness allows a broader range of choices for the pairs $(f, h)$.

\begin{definition}
 {\rm ({\bf Restricted $L$-smooth adaptable on $\mathcal{X}$})} \label{lem:full_extended_descent_lemma}
Let $f,h: \mathbb{R}^n \rightarrow (-\infty,\infty]$ be proper lower semicontinuous convex functions with ${\rm dom}(h)\subseteq {\rm dom}(f)$, and $f, h$ are differentiable on ${\rm int} {\rm dom}(h)$. Given a closed convex set $\mathcal{X} \subseteq \mathbb{R}^n$ with $\mathcal{X} \cap {\rm int}{\rm dom}(h)\neq \emptyset$, we say that $(f,h)$ is $L$-smooth adaptable ($L$-smad) restricted on $\mathcal{X}$ if there exists $L \geq 0$ such that
    \[
    \left| f({\bf x}) - f({\bf y}) - \langle \nabla f({\bf y}), {\bf x} - {\bf y} \rangle \right| \leq LD_h({\bf x}, {\bf y}), ~\forall {\bf x}\in \mathcal{X}\cap\textnormal{intdom}(h),~{\bf y}\in \mathcal{X}\cap\textnormal{dom}(h). \]   
\end{definition}

From Definition \ref{lem:full_extended_descent_lemma}, the property that $(f,h)$ is $L$-smooth adaptable ($L$-smad) restricted on $\mathcal{X}$ is equivalent to 
\[
    \exists\ L\geq0 \quad \text{such that }  Lh +f \text{ and } Lh -f \text{ are convex on } \mathcal{X}\cap{\rm int}{\rm dom}(h).
\]
Since $f$ and $h$ are assumed to be convex, $Lh+f$ holds trivially, and hence only $Lh-f$ needs to be verified, which is often referred to as \textit{NoLips} condition \citep{bauschke2017descent}.

Additionally, if the kernel-generating distance function $h$ is of Legendre type, as discussed in \cite[Chapter 26]{rockafellar2015convex}, 
it is referred to as a Legendre kernel. Legendre function assumption on $h$ is imposed in Section~\ref{sec:pnp_alg} to apply the Plug-and-Play framework.

\begin{definition} {\rm{({\bf Legendre functions})}}
    Let $h: X \rightarrow (-\infty, \infty]$ be the lower semicontinuous, proper, and convex function. It is called:
     \begin{itemize}
        \item[(i)] \textbf{Essentially smooth:} if $h$ is differentiable on $ {\rm int}{\rm dom}(h)$, and $\Vert \nabla h({\bf x}^k) \Vert \rightarrow \infty $ for every sequence $\{ {\bf x}^k \}_{k \in \mathbb{N}} \subset  {\rm int}{\rm dom}(h)$ converging to a boundary point of ${\rm dom}(h)$ as $k\rightarrow +\infty$.
        \item[(ii)] \textbf{Legendre Type:} if $h$ is  essentially smooth and strictly convex on $ {\rm int}{\rm dom}(h)$.
    \end{itemize}
\end{definition}

Similar to the classical proximal mapping \citep{teboulle1992entropic}, a Bregman proximal mapping  \citep{censor1992proximal,gribonval2020characterization} associated with kernel-generating distance function $h$ is defined as follows.
\begin{definition} {\rm{({\bf Bregman proximal operator})}} \label{bregman_proximal_mapping} 
Suppose the kernel function $h$ is of Legendre type, and let \( f: \mathbb{R}^n \rightarrow (-\infty, \infty] \). The Bregman proximal mapping associated with \( f \) and \( h \) is defined as
\[
    {\rm prox}_f^h({\bf y}) \in \argmin_{{\bf x}} \{ f({\bf x}) + D_h({\bf x}, {\bf y}) \}, \quad \forall {\bf y} \in  {\rm int}{\rm dom}(h).
\]
This mapping is referred to as a Bregman proximal operator. If $h\equiv \frac{1}{2} \Vert \cdot \Vert^2$, ${\rm prox}_f^h({\bf x})$ readily reduce to Moreau proximal mapping \citep{moreau1965proximite}.
\end{definition}

For other well-known but essential preliminaries of nonconvex nonsmooth optimization, including the subdifferential and the Kurdyka-{\L}ojasiewicz property, we direct readers to Appendix~\ref{app:preliminaries}.

\subsection{The proposed iBPDCA}
To present our algorithm and establish its theoretical convergence, we  establish the following foundational technical assumptions.

\begin{assumption} \label{asm:assumption_on_h}
Problem \eqref{eq:DCA_model} and the kernel function $h$ satisfy the following assumptions:
\begin{itemize}
    \item[(i)] $h:\mathbb{R}^n\rightarrow (-\infty,\infty]$ is  $\kappa$-strongly convex and $\nabla h$ is $L_h$-Lipschitz continuous on any bounded subset of $\mathbb{R}^n$, and $\overline{\textnormal{dom}(h)}=\mathcal{X}$, where $\overline{\textnormal{dom}(h)}$ denotes the closure of $\textnormal{dom}(h)$.
    \item[(ii)]  $f_1: \mathbb{R}^n \rightarrow (-\infty,+\infty]$ is a proper closed convex function with $\textnormal{dom}(h) \subseteq \textnormal{dom}(f_1)$ and $f_1$ is continuously differentiable on ${\rm int dom}(f_1)$. Moreover, $(f_1, h)$ is $L$-smad restricted on $\mathcal{X}$.
    \item[(iii)] $f_2: \mathbb{R}^n \rightarrow (-\infty,+\infty]$ is proper and convex. $g: \mathbb{R}^n \rightarrow (-\infty,+\infty]$ is proper, $\eta$-weakly convex and lower semicontinuous, with $\textnormal{dom}(g) \cap {\rm int}(\mathcal{X}) \neq \emptyset$.
    \item[(iv)] The objective function $\Psi$ is level-bounded on $\mathcal{X}$, which means for any $r \in \mathbb{R}$, the lower level sets $\{{\bf x} \in \mathcal{X} \mid \Psi({\bf x}) \leq r\}$ are bounded.
    \item[(v)] For any $\lambda > 0$, $\lambda g + h$ is supercoercive, that is,
  $\underset{\|{\bf u}\| \to \infty}{\lim} \frac{\lambda g({\bf u}) + h({\bf u})}{\|{\bf u}\|} = \infty$.
\end{itemize}
\end{assumption}

\begin{remark}
The above assumptions are commonly made for analyzing the convergence of the Bregman-type
algorithms. Note that since $\Psi$ is bounded from below, we know that
\begin{equation} \label{v(p)}
    \Psi^* = \inf \left \{\Psi({\bf x}) | x \in \mathcal{X} \right \} > -\infty
\end{equation}
follows from Assumption~\ref{asm:assumption_on_h}(ii) and (iii) that $ {\rm dom} (\Psi) \cap {\rm int}{\rm dom} (h) =  {\rm dom} (g) \cap {\rm int}{\rm dom} (h) \neq \emptyset$. Also, Assumption~\ref{asm:assumption_on_h}(v) is automatically satisfied when $\mathcal{X}$ is compact.   
\end{remark}

Before presenting the algorithm to tackle Problem \eqref{eq:DCA_model}, we define the following Bregman proximal mapping.
For ${\bf y} \in {\rm int}{\rm dom}(h)$, ${\bf z} \in {\rm dom}(f_2)$ and $\lambda >0$, we define the proximal mapping corresponding to the subproblem of iBPDCA as 
\begin{equation}\label{T_lambda}
    \begin{split}
        \mathcal{T}_{\lambda}({\bf y},{\bf z}) :&= \argmin_{{\bf u} \in \mathcal{X}} \left \{ g({\bf u}) + \left \langle \nabla f_1({\bf y }) - \xi, {\bf u-y} \right \rangle + \frac{1}{\lambda} D_h({\bf u,y}) \right\} \\
        &= \argmin_{{\bf u} \in \mathbb{R}^n} \left \{ g({\bf u}) + \left \langle \nabla f_1({\bf y }) - \xi, {\bf u-y} \right \rangle + \frac{1}{\lambda} D_h({\bf u,y}) \right\},
    \end{split}
\end{equation}
where $\xi\in\partial f_2({\bf z})$, and the second equality follows from the $\overline{{\rm dom}(h)}=\mathcal{X}$ in Assumption \ref{asm:assumption_on_h}(i). Additionally, we put the following assumption to guarantee the well-definedness of the subproblem.
\begin{assumption} \label{asm:proximal_mapping_subset_C}
    For the functions $f_1$, $f_2$, $g$ and $h$ satisfying Assumption \ref{asm:assumption_on_h}, and $\lambda >0$, we assume
 $\mathcal{T}_{\lambda} \in {\rm int} {\rm dom}(h)$ for any $ {\bf y} \in {\rm dom}(h)$, where $\mathcal{T}_{\lambda}$ is defined in \eqref{T_lambda}.   
\end{assumption}

The above assumption trivially holds when ${\rm dom}(h)=\mathbb{R}^n$ or $g$ is convex. Otherwise, it requires a classical constraint qualification condition as mentioned in \cite{bolte2018first}. 
Now we are ready to propose iBPDCA for solving Problem~\eqref{eq:DCA_model}, described in Algorithm~\ref{alg:iBPDCA}. This method mainly includes an inertial step and a Bregman proximal step, where the inertial step size is chosen adaptively by a line search strategy. 

\begin{algorithm}[t!]
\caption{Inertial Bregman Proximal DC Algorithm (iBPDCA)}
  \label{alg:iBPDCA}  
  \begin{algorithmic}[1]
    \Require{Choose a $\kappa$-strongly convex kernel function $h$ in accordance to Assumption~\ref{asm:assumption_on_h} such that $(f_1,h)$ is $L$-smad. Choose $\delta,\epsilon$ with $1> \delta \geq\epsilon>0$ and ${\rm tol}>0$.}
    \State{\textbf{Initialization.} $ {\bf x}^0={\bf x}^{-1} \in {\rm intdom}(h)\text{, and } \frac{1}{\lambda} >  \max \left \{\delta+\frac{\eta}{\kappa} , L \right\}.$} 
     \For{$k = 0, 1, 2,\ldots,$}
        \State{Compute}
        \begin{equation} \label{extrapolation_step}
            {\bf y}^{k} = {\bf x}^{k} + \beta_k \left({\bf x}^k - {\bf x}^{k-1}\right),
        \end{equation}
        
        \State{where $\beta_k \in [0,1)$ is chosen such that
        \begin{equation} \label{bregman_distance_condition_on_extrapolation}
            \lambda\left(\delta-\epsilon\right)D_h({\bf x}^{k-1},{\bf x}^k) \geq D_h({\bf x}^k,{\bf y}^k).
        \end{equation}} 
        
        \If {${\bf y}^k \notin {\rm int}{\rm dom}(h)$,} \label{if}
            \State{${\bf y}^k = {\bf x}^k$}.
        \EndIf
        
        \State{Take $\xi^k \in \partial f_2({\bf x}^k)$, and compute}
        \begin{equation} \label{subproblem:main_subproblem_of_iBPDCA}
            {\bf x}^{k+1} = \argmin_{{\bf x}\in \mathcal{X}} {\left\{ g({\bf x}) +\left \langle \nabla f_1({\bf y}^k)-\xi^k, {\bf x}-{\bf y}^k \right \rangle +\frac{1}{\lambda} D_h({\bf x},{\bf y}^k) \right\}}.
        \end{equation}
            \If{$\frac{\|{\bf x}_{k+1} - {\bf x}_{k}\|}{\|{\bf x}^k\|} < {\rm tol}$}
            break
        \EndIf
    \EndFor
  \end{algorithmic}
\end{algorithm}

\begin{remark} \label{remark:range_of_beta_k}
    It is easy to verify that $\beta_k = 0$ always satisfies \eqref{bregman_distance_condition_on_extrapolation}. Hence, iBPDCA will reduce to BPDCA \citep{takahashi2022new}. If $h = \frac{1}{2} \Vert \cdot \Vert^2$, the line search condition \eqref{bregman_distance_condition_on_extrapolation} can be reduced to $\lambda (\delta - \epsilon) \Vert x^{k-1} - x^k \Vert^2 \geq \Vert \beta_k (x^k -x^{k-1} )\Vert^2$, or simply $\beta_k\leq \sqrt{\lambda (\delta - \epsilon)} $. 
    As shown in \cite{mukkamala2020convex}, there exists a value $\gamma_k$ such that Inequality~\eqref{bregman_distance_condition_on_extrapolation} is satisfied for all $\beta_k \in [0,\gamma_k]$. A backtracking line search is employed when the choice of $\beta_k$ is not deterministic, as in the case where $h = \frac{1}{2} \| \cdot \|^2$. At each iteration, we initialize $\beta_k = \frac{\mu_k - 1}{\mu_k}$\vspace{0.05in} and update it by decreasing $\beta_k$ to $c \beta_k$ for some constant $c \in (0, 1)$ until the condition~\eqref{bregman_distance_condition_on_extrapolation} is satisfied. The value of $\mu_k$ is updated every iteration using the formula $\mu_k = \frac{1 + \sqrt{1 + 4\mu_k^2}}{2}$, with an initial value of $\mu_0 = 1$.    
    \end{remark}
    
Later on, we demonstrate that $\epsilon$ quantifies the reduction in the objective function relative to the optimal solution at each iteration.
To ensure the well-definedness of subproblem \eqref{subproblem:main_subproblem_of_iBPDCA}, we need to ensure the auxiliary variable ${\bf y}^k$ defined in \eqref{extrapolation_step} belongs to $ {\rm int}{\rm dom}(h)$. 
As shown in \cite{bolte2018first,mukkamala2020convex,takahashi2022new}, when $ {\rm dom}(h) =\mathbb{R}^n$, ${\bf y}^k$  always stay in ${\rm int}{\rm dom} (h)$. If $ {\rm dom}(h) \neq \mathbb{R}^n$, we set ${\bf y}^k ={\bf x}^k$ if ${\bf y}^k \notin  {\rm int}{\rm dom}(h)$ to guarantee well-definedness of ${\bf y}^k$. This leads us to line~\ref{if} in Algorithm~\ref{alg:iBPDCA}.

\subsection{Convergence of iBPDCA} \label{sec:convergence_iBPDCA}
 In this subsection, we are devoted to establishing the subsequential and global convergence of iBPDCA (Algorithm \ref{alg:iBPDCA}), based on Assumption~\ref{asm:assumption_on_h},~\ref{asm:proximal_mapping_subset_C} and an auxiliary Lyapunov function.
Let $\{{\bf x}^k\}_{k=0}^\infty$ be a sequence generated by iBPDCA. We define, at iterate $k \in \mathbb{N}$, the following auxiliary Lyapunov function for $\delta>0$,
\begin{equation} \label{eq:definition_Hk}
    H_\delta({\bf x},{\bf y})= \Psi({\bf x}) + \delta D_h({\bf y},{\bf x}),\quad \forall {\bf x} \in  {\rm dom}(h), ~{\bf y}\in  {\rm int}{\rm dom}(h).
\end{equation}
We first show the sufficient decrease property of $H_\delta$ in the following lemma, whose proof can be found in Appendix~\ref{proof:descending_property_auxiliary_function}.

\begin{lemma} {\rm{({\bf Sufficient decrease property of $H_\delta$})}}
\label{lem:descending_property_auxiliary_function}
    Suppose Assumptions~\ref{asm:assumption_on_h} and \ref{asm:proximal_mapping_subset_C} hold. Let $\{{\bf x}^k\}_{k=0}^\infty$ be a sequence generated by iBPDCA presented in Algorithm~\ref{alg:iBPDCA}. Then, it holds that
      \begin{equation} \label{inequality_for_auxiliary_function}
    \begin{aligned}
        H_\delta ({\bf x}^k,{\bf x}^{k-1}) &\geq H_\delta({\bf x}^{k+1},{\bf x}^k) + \left(\frac{1}{\lambda} -\frac{\eta}{\kappa}-\delta \right)D_h({\bf x}^k,{\bf x}^{k+1}) \\
        &\quad + \left(\frac{1}{\lambda} -L \right) D_h({\bf x}^{k+1},{\bf y}^k) + \epsilon D_h({\bf x}^{k-1},{\bf x}^k).
    \end{aligned}
    \end{equation} 
    Moreover, the sequence $\{H_\delta\}_{k=0}^\infty$ is non-increasing.
\end{lemma}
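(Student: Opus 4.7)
The plan is to derive a one-step descent inequality for $\Psi$ and then absorb the unwanted $D_h({\bf x}^k,{\bf y}^k)$ term using the line-search condition \eqref{bregman_distance_condition_on_extrapolation}; adding $\delta D_h({\bf x}^{k-1},{\bf x}^k) - \delta D_h({\bf x}^k,{\bf x}^{k+1})$ to both sides then reshapes the inequality into the $H_\delta$-form.

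\textbf{Step 1 (first-order condition of the subproblem).} The minimization in \eqref{subproblem:main_subproblem_of_iBPDCA} is set on $\mathcal{X}$, but since $\overline{\textnormal{dom}(h)}=\mathcal{X}$ and $\nabla h$ blows up on the boundary under Assumption~\ref{asm:assumption_on_h}(i)/Assumption~\ref{asm:proximal_mapping_subset_C}, the constraint is inactive at the optimum. Writing down the stationarity condition therefore yields a $\zeta^{k+1}\in\partial g({\bf x}^{k+1})$ with
\[
\zeta^{k+1}=-\nabla f_1({\bf y}^k)+\xi^k-\tfrac{1}{\lambda}\bigl(\nabla h({\bf x}^{k+1})-\nabla h({\bf y}^k)\bigr).
\]
Applying the $\eta$-weak convexity of $g$ at the pair $({\bf x}^{k+1},{\bf x}^k)$ and substituting $\zeta^{k+1}$ gives an upper bound for $g({\bf x}^{k+1})-g({\bf x}^k)$ in terms of $\langle\nabla f_1({\bf y}^k)-\xi^k,{\bf x}^k-{\bf x}^{k+1}\rangle$, a Bregman inner product, and $\tfrac{\eta}{2}\|{\bf x}^k-{\bf x}^{k+1}\|^2$. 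I then use the three-point identity (Lemma~\ref{lem:three-point-identity}) with $({\bf x},{\bf y},{\bf z})=({\bf x}^k,{\bf x}^{k+1},{\bf y}^k)$ to rewrite the Bregman inner product as $D_h({\bf x}^k,{\bf y}^k)-D_h({\bf x}^k,{\bf x}^{k+1})-D_h({\bf x}^{k+1},{\bf y}^k)$.

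\textbf{Step 2 (bounds on $f_1$ and $-f_2$).} Combining the $L$-smad inequality at $({\bf x}^{k+1},{\bf y}^k)$ with the convexity inequality $f_1({\bf x}^k)\ge f_1({\bf y}^k)+\langle\nabla f_1({\bf y}^k),{\bf x}^k-{\bf y}^k\rangle$ produces
\[
f_1({\bf x}^{k+1})-f_1({\bf x}^k)\le \langle \nabla f_1({\bf y}^k),\,{\bf x}^{k+1}-{\bf x}^k\rangle+L\,D_h({\bf x}^{k+1},{\bf y}^k).
\]
Convexity of $f_2$ with $\xi^k\in\partial f_2({\bf x}^k)$ gives $-f_2({\bf x}^{k+1})+f_2({\bf x}^k)\le\langle \xi^k,{\bf x}^k-{\bf x}^{k+1}\rangle$.

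\textbf{Step 3 (assemble $\Psi$).} Summing the three inequalities for $f_1$, $-f_2$, and $g$ makes the $\nabla f_1({\bf y}^k)$ and $\xi^k$ inner products cancel exactly, leaving
\[
\Psi({\bf x}^{k+1})-\Psi({\bf x}^k)\le \Bigl(L-\tfrac{1}{\lambda}\Bigr)D_h({\bf x}^{k+1},{\bf y}^k)-\tfrac{1}{\lambda}D_h({\bf x}^k,{\bf x}^{k+1})+\tfrac{1}{\lambda}D_h({\bf x}^k,{\bf y}^k)+\tfrac{\eta}{2}\|{\bf x}^k-{\bf x}^{k+1}\|^2.
\]
Invoking $\kappa$-strong convexity of $h$ to replace $\tfrac{\eta}{2}\|{\bf x}^k-{\bf x}^{k+1}\|^2$ by $\tfrac{\eta}{\kappa}D_h({\bf x}^k,{\bf x}^{k+1})$ converts this into a purely Bregman-distance statement.

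\textbf{Step 4 (line search and $H_\delta$-reshaping).} The line-search condition \eqref{bregman_distance_condition_on_extrapolation} gives $\tfrac{1}{\lambda}D_h({\bf x}^k,{\bf y}^k)\le(\delta-\epsilon)D_h({\bf x}^{k-1},{\bf x}^k)$ (and it also holds in the fallback branch ${\bf y}^k={\bf x}^k$, where $D_h({\bf x}^k,{\bf y}^k)=0$). Substituting and then adding $\delta D_h({\bf x}^{k-1},{\bf x}^k)-\delta D_h({\bf x}^k,{\bf x}^{k+1})$ to both sides produces exactly \eqref{inequality_for_auxiliary_function}. Finally, since $\tfrac{1}{\lambda}>\max\{\delta+\tfrac{\eta}{\kappa},L\}$ by initialization and $\epsilon>0$, every coefficient on the right-hand side is positive while each $D_h(\cdot,\cdot)\ge 0$ by convexity of $h$, so $\{H_\delta({\bf x}^{k+1},{\bf x}^k)\}$ is non-increasing.

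The main obstacle is the bookkeeping in Step~3: making sure that the subgradient/gradient inner products from the three inequalities cancel and that the three-point identity is applied with the right triple so that the $D_h({\bf x}^k,{\bf y}^k)$ term appears with the correct sign for the line-search absorption in Step~4. Once that algebraic cancellation is executed cleanly, the remaining steps are routine.
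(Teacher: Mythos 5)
Your proposal is correct and follows essentially the same route as the paper's proof: first-order optimality of the subproblem plus $\eta$-weak convexity of $g$, the three-point identity applied to the triple $({\bf x}^k,{\bf x}^{k+1},{\bf y}^k)$, the $L$-smad and convexity bounds on $f_1$ and $f_2$, conversion of $\tfrac{\eta}{2}\|{\bf x}^k-{\bf x}^{k+1}\|^2$ into $\tfrac{\eta}{\kappa}D_h({\bf x}^k,{\bf x}^{k+1})$ via $\kappa$-strong convexity, and absorption of $\tfrac{1}{\lambda}D_h({\bf x}^k,{\bf y}^k)$ by the line-search condition to produce the $\epsilon D_h({\bf x}^{k-1},{\bf x}^k)$ term. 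No gaps.
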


The above fact yields the following result, whose proof can be found in Appendix~\ref{proof:convergence_property_of_Dh}.

\begin{proposition} \label{prop:convergence_property_of_Dh}
   Suppose Assumptions~\ref{asm:assumption_on_h} and \ref{asm:proximal_mapping_subset_C} hold. Let $\{{\bf x}^k\}_{k=0}^\infty$ be a sequence generated by iBPDCA presented in Algorithm~\ref{alg:iBPDCA}. Then, the following statements hold:
    \begin{itemize} 
    \item[(i)] $\sum_{k=1}^\infty D_h({\bf x}^{k-1},{\bf x}^k)<\infty$; hence, the sequence $\{D_h({\bf x}^{k-1},{\bf x}^k)\}^\infty_{k=0}$ converges to zero.
    \item[(ii)] $\min_{1\leq k \leq n}D_h({\bf x}^{k-1},{\bf x}^k)\leq \frac{1}{n\epsilon} (\Psi({\bf x}^0) - \Psi^*)$, where $\Psi^*= \inf_{\bf x} \Psi({\bf x}) >-\infty$.
    \end{itemize}
\end{proposition}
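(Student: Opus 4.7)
The plan is to derive both conclusions directly from the sufficient decrease inequality in Lemma~\ref{lem:descending_property_auxiliary_function} by telescoping. The key observation is that under the step-size restriction $\frac{1}{\lambda}>\max\{\delta+\eta/\kappa,\,L\}$ from Algorithm~\ref{alg:iBPDCA}, all three coefficients on the right-hand side of \eqref{inequality_for_auxiliary_function}, namely $\tfrac{1}{\lambda}-\tfrac{\eta}{\kappa}-\delta$, $\tfrac{1}{\lambda}-L$, and $\epsilon$, are strictly positive, and the Bregman distances $D_h(\cdot,\cdot)$ are nonnegative by the convexity of $h$ (in fact bounded below by $\tfrac{\kappa}{2}\|\cdot\|^2$ via strong convexity in Assumption~\ref{asm:assumption_on_h}(i)). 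In particular, dropping the first two Bregman terms on the right-hand side of \eqref{inequality_for_auxiliary_function} yields the simpler inequality
\begin{equation*}
H_\delta({\bf x}^k,{\bf x}^{k-1})-H_\delta({\bf x}^{k+1},{\bf x}^k)\;\ge\;\epsilon\,D_h({\bf x}^{k-1},{\bf x}^k),
\end{equation*}
valid for all $k\ge 0$.

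Next I would sum this inequality from $k=0$ to $k=n-1$ (noting that ${\bf x}^{-1}={\bf x}^{0}$ by initialization, so that $D_h({\bf x}^{-1},{\bf x}^0)=0$ and hence $H_\delta({\bf x}^0,{\bf x}^{-1})=\Psi({\bf x}^0)$). Telescoping produces
\begin{equation*}
\Psi({\bf x}^0)-H_\delta({\bf x}^{n},{\bf x}^{n-1})\;\ge\;\epsilon\sum_{k=0}^{n-1}D_h({\bf x}^{k-1},{\bf x}^k)\;=\;\epsilon\sum_{k=1}^{n-1}D_h({\bf x}^{k-1},{\bf x}^k).
\end{equation*}
Since $D_h({\bf x}^{n-1},{\bf x}^{n})\ge 0$, we have $H_\delta({\bf x}^{n},{\bf x}^{n-1})\ge\Psi({\bf x}^{n})\ge \Psi^*$ (using that ${\bf x}^n\in{\rm intdom}(h)\subseteq\mathcal{X}$ by Assumption~\ref{asm:proximal_mapping_subset_C} and the definition \eqref{v(p)}). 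After a suitable re-indexing (or shifting by one iterate and using the same argument to cover $k=n$), this yields
\begin{equation*}
\epsilon\sum_{k=1}^{n}D_h({\bf x}^{k-1},{\bf x}^k)\;\le\;\Psi({\bf x}^0)-\Psi^*.
\end{equation*}
This single estimate simultaneously implies both assertions: letting $n\to\infty$ gives $\sum_{k=1}^\infty D_h({\bf x}^{k-1},{\bf x}^k)<\infty$ and therefore $D_h({\bf x}^{k-1},{\bf x}^k)\to 0$, proving (i); and bounding the average of the $n$ nonnegative terms below by the minimum yields $n\cdot\min_{1\le k\le n}D_h({\bf x}^{k-1},{\bf x}^k)\le\tfrac{1}{\epsilon}(\Psi({\bf x}^0)-\Psi^*)$, proving (ii).

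I do not expect any serious obstacle here: the proof is essentially a telescoping argument once the sufficient decrease lemma is in hand. The only technicalities worth being careful about are (a) justifying $H_\delta({\bf x}^0,{\bf x}^{-1})=\Psi({\bf x}^0)$ via the initialization ${\bf x}^{-1}={\bf x}^0$, (b) verifying that all iterates stay in ${\rm intdom}(h)$ so that the Bregman distances and $H_\delta$ are well-defined (guaranteed by Assumption~\ref{asm:proximal_mapping_subset_C}), and (c) handling the index shift when replacing the telescoped bound on $\sum_{k=0}^{n-1}$ by the stated bound on $\sum_{k=1}^{n}$, which amounts to applying the decrease inequality one additional step.
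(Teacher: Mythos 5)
Your proposal is correct and follows essentially the same route as the paper's own proof: both drop the two nonnegative Bregman terms from the sufficient decrease inequality of Lemma~\ref{lem:descending_property_auxiliary_function}, telescope using the initialization ${\bf x}^{-1}={\bf x}^0$ (so $D_h({\bf x}^{-1},{\bf x}^0)=0$) and the bound $H_\delta({\bf x}^n,{\bf x}^{n-1})\ge\Psi({\bf x}^n)\ge\Psi^*$, and then deduce (i) by summability and (ii) by bounding the minimum by the average. The only cosmetic differences are the paper's intermediate multiplication by $\lambda$ and a slightly different choice of summation endpoints, which you already address via the index shift.
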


Now we present the result of subsequential convergence for the proposed iBPDCA, and its proof is presented in Appendix~\ref{proof:thm:subsquential_convergence}.

\begin{theorem}{\rm ({\bf Subsequential convergence of iBPDCA})} \label{thm:subsquential_convergence} 
Suppose Assumptions~\ref{asm:assumption_on_h} and \ref{asm:proximal_mapping_subset_C} hold. Let $\{{\bf x}^k\}_{k=0}^\infty$ be a sequence generated by iBPDCA presented in Algorithm~\ref{alg:iBPDCA}. Then the following statements hold:
\begin{itemize}
\item[(i)] The sequence $\{{\bf x}^k\}^\infty_{k=0}$ is bounded.
\item[(ii)]  The sequence $\{\xi^k\}^\infty_{k=0}$ is bounded.
\item[(iii)] $\lim_{k \rightarrow \infty} \Vert {\bf x}^{k+1}-{\bf x}^k \Vert =0.$
\item[(iv)] Any accumulation point of $\{{\bf x}^k\}^\infty_{k=0}$ is a limiting critical point of Problem \eqref{eq:DCA_model}.
\end{itemize}
\end{theorem}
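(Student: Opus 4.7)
The plan is to establish the four assertions in order, with (i)--(iii) following quickly from the sufficient-decrease machinery already in hand and (iv) requiring a careful limit argument in the first-order optimality condition of the Bregman subproblem.

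For (i), I would iterate Lemma~\ref{lem:descending_property_auxiliary_function} to obtain $H_\delta({\bf x}^{k+1},{\bf x}^k)\le H_\delta({\bf x}^0,{\bf x}^{-1})=\Psi({\bf x}^0)$. Since $D_h\ge 0$, this yields $\Psi({\bf x}^{k+1})\le\Psi({\bf x}^0)$, and the level-boundedness in Assumption~\ref{asm:assumption_on_h}(iv) gives (i). For (ii), $f_2$ is proper convex and $\{{\bf x}^k\}$ lies in a bounded subset of the interior of ${\rm dom}(f_2)$, so local boundedness of the convex subdifferential produces a uniform bound on $\{\xi^k\}$. For (iii), Proposition~\ref{prop:convergence_property_of_Dh}(i) gives $D_h({\bf x}^{k-1},{\bf x}^k)\to 0$, and the $\kappa$-strong convexity of $h$ in Assumption~\ref{asm:assumption_on_h}(i) converts this into $\tfrac{\kappa}{2}\|{\bf x}^k-{\bf x}^{k-1}\|^2\to 0$.

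For (iv), let ${\bf x}^{k_j}\to{\bf x}^*$. From (iii), ${\bf x}^{k_j+1}\to{\bf x}^*$. The line-search condition~\eqref{bregman_distance_condition_on_extrapolation} forces $D_h({\bf x}^k,{\bf y}^k)\to 0$, and strong convexity then gives ${\bf y}^{k_j}\to{\bf x}^*$. Using Assumption~\ref{asm:proximal_mapping_subset_C} to place ${\bf x}^{k+1}$ in ${\rm int}\,{\rm dom}(h)$ so the constraint $\mathcal{X}$ contributes no normal cone, the first-order optimality condition of subproblem~\eqref{subproblem:main_subproblem_of_iBPDCA} reads
\begin{equation*}
\xi^k-\nabla f_1({\bf y}^k)-\tfrac{1}{\lambda}\bigl(\nabla h({\bf x}^{k+1})-\nabla h({\bf y}^k)\bigr)\in \partial g({\bf x}^{k+1}).
\end{equation*}
By (ii), $\{\xi^{k_j}\}$ is bounded, so I pass to a further subsequence with $\xi^{k_j}\to\xi^*$; continuity of $\nabla f_1$ and of $\nabla h$ on bounded sets then shows the left-hand side converges to $\xi^*-\nabla f_1({\bf x}^*)$.

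The main obstacle is closing the limiting subdifferential of $g$, which demands $g({\bf x}^{k_j+1})\to g({\bf x}^*)$. I would test ${\bf x}^*$ as a feasible competitor in subproblem~\eqref{subproblem:main_subproblem_of_iBPDCA}: since the linear and Bregman terms vanish in the limit along the subsequence (by continuity of $\nabla f_1$ and of $\nabla h$, together with ${\bf y}^{k_j},{\bf x}^{k_j+1}\to{\bf x}^*$), taking $\limsup$ yields $\limsup_j g({\bf x}^{k_j+1})\le g({\bf x}^*)$, and combining with the lower semicontinuity of $g$ forces $g({\bf x}^{k_j+1})\to g({\bf x}^*)$. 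The closedness of $\partial g$ (which holds since $g+\tfrac{\eta}{2}\|\cdot\|^2$ is convex) then delivers $\xi^*-\nabla f_1({\bf x}^*)\in\partial g({\bf x}^*)$; closedness of $\partial f_2$ gives $\xi^*\in\partial f_2({\bf x}^*)$, so $0\in\nabla f_1({\bf x}^*)+\partial g({\bf x}^*)-\partial f_2({\bf x}^*)$ and ${\bf x}^*$ is a limiting critical point of $\Psi$.
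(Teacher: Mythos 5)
Your proposal is correct and follows the same overall architecture as the paper's proof: (i) via monotonicity of $H_\delta$ plus level-boundedness, (ii) via local boundedness of the convex subdifferential of the finite-valued $f_2$ on the bounded iterate set, and (iv) via the first-order optimality condition of the subproblem with a vanishing perturbation and closedness of the subdifferentials. Two localized differences are worth recording. For (iii), you invoke Proposition~\ref{prop:convergence_property_of_Dh}(i) together with $D_h({\bf x}^{k-1},{\bf x}^k)\ge\frac{\kappa}{2}\|{\bf x}^{k-1}-{\bf x}^k\|^2$, which settles the claim in one line; the paper instead re-sums the decrease inequality through the term $(\frac{1}{\lambda}-L)D_h({\bf x}^{k+1},{\bf y}^k)$ and a lower bound of the form $\|{\bf x}^{k+1}-{\bf y}^k\|^2\ge\|{\bf x}^{k+1}-{\bf x}^k\|^2-\beta_k^2\|{\bf x}^k-{\bf x}^{k-1}\|^2$, a route that is both longer and less transparent, so your shortcut is preferable. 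For (iv), you explicitly secure the $g$-attentive convergence $g({\bf x}^{k_j+1})\to g({\bf x}^*)$ by testing ${\bf x}^*$ as a competitor in the subproblem before closing the limiting subdifferential $\partial g$; the paper's proof of Theorem~\ref{thm:subsquential_convergence}(iv) instead appeals to ``continuity of $g$,'' which is not among the stated hypotheses ($g$ is only proper, weakly convex and lower semicontinuous), and defers the competitor argument to the proof of Proposition~\ref{prop:proporty_accumulation_point}. Your version therefore fills a step the paper glosses over, at no extra cost in assumptions.
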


Next, we study the behavior of the sequence $\{\Psi({\bf x}^k)\}_{k=0}^\infty$ for a sequence $\{{\bf x}^k\}_{k=0}^\infty$ generated by iBPDCA. The result will be used in establishing the global convergence of the whole sequence $\{{\bf x}^k\}_{k=0}^\infty$  under additional assumptions. The proof can be found in Appendix~\ref{proof:proporty_accumulation_point}.

\begin{proposition} \label{prop:proporty_accumulation_point}
Suppose Assumptions~\ref{asm:assumption_on_h} and \ref{asm:proximal_mapping_subset_C} hold. Let $\{{\bf x}^k\}_{k=0}^\infty$ be a sequence generated by iBPDCA presented in Algorithm~\ref{alg:iBPDCA}. Then, we obtain the following conclusions:
\begin{itemize} 
    \item[(i)] $\zeta := \lim_{k\rightarrow\infty} \Psi({\bf x}^k) =\lim_{k\rightarrow \infty}H_\delta({\bf x}^k,{\bf x}^{k-1})$ exists.
    \item[(ii)] $\Psi \equiv \zeta$ on $\Omega$, where $\Omega$ is the set of accumulation points of $\{{\bf x}^k\}^\infty_{k=0}$.
\end{itemize}
\end{proposition}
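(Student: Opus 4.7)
The plan is to treat the two parts separately, with part (i) being a direct consequence of Lemma~\ref{lem:descending_property_auxiliary_function} and Proposition~\ref{prop:convergence_property_of_Dh}, and part (ii) requiring a slightly more careful argument that pushes through the lower semicontinuity of $g$.

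For part (i), I would first observe that the sequence $\{H_\delta({\bf x}^{k+1},{\bf x}^k)\}_{k=0}^\infty$ is non-increasing by Lemma~\ref{lem:descending_property_auxiliary_function} (since the three error terms on the right-hand side of \eqref{inequality_for_auxiliary_function} are non-negative, using $1/\lambda>\delta+\eta/\kappa$ and $1/\lambda>L$). It is also bounded below because $D_h\geq 0$ gives $H_\delta({\bf x}^{k+1},{\bf x}^k)\geq \Psi({\bf x}^{k+1})\geq \Psi^*>-\infty$, where $\Psi^*$ is finite by \eqref{v(p)}. A monotone bounded sequence converges, so $\zeta:=\lim_{k\to\infty}H_\delta({\bf x}^k,{\bf x}^{k-1})$ exists. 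To transfer this to $\Psi$, I would use the definition $H_\delta({\bf x}^k,{\bf x}^{k-1})=\Psi({\bf x}^k)+\delta D_h({\bf x}^{k-1},{\bf x}^k)$, rearrange, and apply Proposition~\ref{prop:convergence_property_of_Dh}(i), which yields $D_h({\bf x}^{k-1},{\bf x}^k)\to 0$. Hence $\lim_{k\to\infty}\Psi({\bf x}^k)=\zeta$.

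For part (ii), fix ${\bf x}^*\in\Omega$ and pick a subsequence ${\bf x}^{k_j}\to{\bf x}^*$. I would first argue that ${\bf x}^{k_j-1}\to{\bf x}^*$ and ${\bf y}^{k_j-1}\to{\bf x}^*$: the former follows from Theorem~\ref{thm:subsquential_convergence}(iii), i.e.\ $\|{\bf x}^{k+1}-{\bf x}^k\|\to 0$, and the latter from $\|{\bf y}^{k-1}-{\bf x}^{k-1}\|=\beta_{k-1}\|{\bf x}^{k-1}-{\bf x}^{k-2}\|\leq\|{\bf x}^{k-1}-{\bf x}^{k-2}\|\to 0$ (or trivially when the reset ${\bf y}^k={\bf x}^k$ is invoked). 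Next, using that ${\bf x}^{k_j}$ is the minimizer of the subproblem \eqref{subproblem:main_subproblem_of_iBPDCA} at index $k=k_j-1$, plugging in ${\bf x}={\bf x}^*\in\mathcal{X}$ as a test point gives
\begin{equation*}
g({\bf x}^{k_j})\leq g({\bf x}^*)+\bigl\langle\nabla f_1({\bf y}^{k_j-1})-\xi^{k_j-1},{\bf x}^*-{\bf x}^{k_j}\bigr\rangle+\tfrac{1}{\lambda}\bigl[D_h({\bf x}^*,{\bf y}^{k_j-1})-D_h({\bf x}^{k_j},{\bf y}^{k_j-1})\bigr].
\end{equation*}
I would then take $\limsup$ as $j\to\infty$: the inner-product term vanishes because $\{\xi^k\}$ is bounded by Theorem~\ref{thm:subsquential_convergence}(ii), $\nabla f_1$ is continuous at ${\bf x}^*$, and ${\bf x}^*-{\bf x}^{k_j}\to 0$; both Bregman terms tend to $D_h({\bf x}^*,{\bf x}^*)=0$ by the continuity of $h$ and the local Lipschitz continuity of $\nabla h$ from Assumption~\ref{asm:assumption_on_h}(i). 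This yields $\limsup_{j}g({\bf x}^{k_j})\leq g({\bf x}^*)$. Coupling with the lower semicontinuity $g({\bf x}^*)\leq\liminf_{j}g({\bf x}^{k_j})$ forces $g({\bf x}^{k_j})\to g({\bf x}^*)$.

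Finally, because $f_1$ and $f_2$ are convex and finite on a neighborhood of ${\bf x}^*$ (using Assumption~\ref{asm:assumption_on_h}(ii)-(iii) and the fact that the bounded iterates lie in $\mathcal{X}\cap\mathrm{dom}(g)$), they are continuous there, so $f_1({\bf x}^{k_j})\to f_1({\bf x}^*)$ and $f_2({\bf x}^{k_j})\to f_2({\bf x}^*)$. Combining these three limits gives $\Psi({\bf x}^{k_j})\to\Psi({\bf x}^*)$; comparing with part (i), which asserts $\Psi({\bf x}^{k_j})\to\zeta$, produces $\Psi({\bf x}^*)=\zeta$, completing the proof.

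The main obstacle is part (ii): one must show, despite the merely lower semicontinuous $g$, that $g({\bf x}^{k_j})\to g({\bf x}^*)$. The subproblem test-point trick is standard, but it requires carefully verifying that ${\bf y}^{k_j-1}\to{\bf x}^*$ (handling both branches of the $\mathbf{y}^k$ construction, including the safeguard ${\bf y}^k={\bf x}^k$ when ${\bf y}^k\notin\mathrm{intdom}(h)$) and that all error terms—particularly the Bregman distances and the subgradient inner product—vanish in the limit; these rely crucially on the boundedness results from Theorem~\ref{thm:subsquential_convergence} and the regularity of $h$.
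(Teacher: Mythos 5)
Your proposal is correct and follows essentially the same route as the paper: part (i) via monotonicity and boundedness of $H_\delta$ plus $D_h({\bf x}^{k-1},{\bf x}^k)\to 0$, and part (ii) via the test-point trick in the subproblem combined with lower semicontinuity. The only cosmetic differences are that you apply lower semicontinuity to $g$ (together with continuity of $f_1,f_2$) rather than to $\Psi$ directly, and you index the subproblem so as to control $g({\bf x}^{k_j})$ instead of $g({\bf x}^{k_j+1})$; neither changes the substance of the argument.
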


To analyze the global convergence property of the sequence $\left\{{\bf x}^k\right\}^\infty_{k=0}$ generated by iBPDCA, we need to make the following additional assumption.

\begin{assumption} \label{asm:f2_local_continuity}
The functions $f_2$ and $h$ satisfy the following assumptions:
    \begin{enumerate}
    \item[(i)] $f_2$ is continuously differentiable on an open set $\mathcal{N}_0\subseteq \mathbb{R}^n$ that contains all the limiting critical points of $\Psi$ (\ie\  ${\rm crit} \Psi$), and $\nabla f_2$ is locally Lipschitz continuous on $\mathcal{N}_0$.
    \item[(ii)] $h$ has bounded second derivative on any compact subset $B \subset  {\rm int}{\rm dom}(h)$.
    \end{enumerate}
\end{assumption}

The above, together with the Kurdyka-{\L}ojasiewicz (KL) property (see \cite{bolte2007lojasiewicz} and Appendix \ref{app:preliminaries} for details) allows us to establish global convergence of iBPDCA. The proof can be found in Appendix~\ref{proof:global_convergence}.

\begin{theorem} {\rm{({\bf Global convergence of iBPDCA})} }\label{thm:global_convergence}
Suppose Assumptions~\ref{asm:assumption_on_h},~\ref{asm:proximal_mapping_subset_C}, and~\ref{asm:f2_local_continuity} hold, and the auxiliary function $H_\delta$ is a KL function. Let $\{ {\bf x}^k\}^\infty_{k=0}$ be the sequence generated by iBPDCA presented in Algorithm \ref{alg:iBPDCA}. Then the following statements hold:
\begin{enumerate}
    \item[(i)] $\lim_{k\rightarrow\infty} {\rm dist}\left({\bf (0, 0)},\partial H_\delta({\bf x}^k,{\bf x}^{k-1})\right)=0$. 
    \item[(ii)] The set of accumulation points of $\left\{({\bf x}^k,{\bf x}^{k-1}) \right\}^\infty_{k=0}$ is $\Upsilon:= \{({\bf x},{\bf x})\mid {\bf x} \in \Omega \}$ and $H_\delta \equiv \zeta$ on $\Upsilon$, where $\Omega$ is the set of accumulation point of $\{{\bf x}^k\}^\infty_{k=0}$.
    \item[(iii)]  The sequence $\left\{{\bf x}^k \right\}^\infty_{k=0}$ converges to a limiting critical point of Problem~\eqref{eq:DCA_model}, and $\sum_{k=1}^\infty \Vert {\bf x}^k - {\bf x}^{k-1}\Vert < \infty$.
\end{enumerate}
\end{theorem}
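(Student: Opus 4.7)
The plan is to follow the Attouch--Bolte--Svaiter / Bolte--Sabach--Teboulle KL-based framework, adapted to the Bregman and inertial-DC setting, treating the three parts in the order stated. For part~(i), I would start from the first-order optimality of subproblem \eqref{subproblem:main_subproblem_of_iBPDCA}. Since ${\bf x}^{k+1}\in {\rm intdom}(h)\subseteq{\rm int}(\mathcal{X})$ by Assumption~\ref{asm:proximal_mapping_subset_C}, the constraint contributes no normal-cone term, so
\[
\tfrac{1}{\lambda}\bigl(\nabla h({\bf y}^k)-\nabla h({\bf x}^{k+1})\bigr)+\xi^k-\nabla f_1({\bf y}^k)\in\partial g({\bf x}^{k+1}).
\]
By Assumption~\ref{asm:f2_local_continuity}(i) and Theorem~\ref{thm:subsquential_convergence}(iv), for $k$ large ${\bf x}^k$ lies in $\mathcal{N}_0$, so $\xi^k=\nabla f_2({\bf x}^k)$. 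Adding and subtracting $\nabla f_1({\bf x}^{k+1})-\nabla f_2({\bf x}^{k+1})$ and using the sum rule for the $\eta$-weakly convex function $g$ produces an explicit $v^{k+1}\in\partial\Psi({\bf x}^{k+1})$. Computing $\partial H_\delta$ directly via $\nabla_{\bf x}D_h({\bf y},{\bf x})=\nabla^2 h({\bf x})({\bf x}-{\bf y})$ and $\nabla_{\bf y}D_h({\bf y},{\bf x})=\nabla h({\bf y})-\nabla h({\bf x})$ yields a concrete element of $\partial H_\delta({\bf x}^{k+1},{\bf x}^k)$ whose norm is bounded by $C\bigl(\|{\bf x}^{k+1}-{\bf x}^k\|+\|{\bf y}^k-{\bf x}^{k+1}\|\bigr)$ using Lipschitzness of $\nabla f_1,\nabla f_2,\nabla h,\nabla^2 h$ on bounded sets (Assumptions~\ref{asm:assumption_on_h}(i)--(ii) and~\ref{asm:f2_local_continuity}). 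Because $\|{\bf y}^k-{\bf x}^{k+1}\|\le\|{\bf x}^k-{\bf x}^{k+1}\|+\|{\bf x}^k-{\bf x}^{k-1}\|$ with $\beta_k\in[0,1)$, Theorem~\ref{thm:subsquential_convergence}(iii) drives this bound to zero.

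For part~(ii), the argument is short. Boundedness of $\{{\bf x}^k\}$ from Theorem~\ref{thm:subsquential_convergence}(i) together with $\|{\bf x}^k-{\bf x}^{k-1}\|\to 0$ from Theorem~\ref{thm:subsquential_convergence}(iii) implies every cluster point of $\{({\bf x}^k,{\bf x}^{k-1})\}$ lies on the diagonal $\{({\bf x},{\bf x})\mid{\bf x}\in\Omega\}$. Combined with $D_h({\bf x},{\bf x})=0$ and $\Psi\equiv\zeta$ on $\Omega$ (Proposition~\ref{prop:proporty_accumulation_point}(ii)), this gives $H_\delta\equiv\zeta$ on $\Upsilon$.

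For part~(iii), I would split into two cases. If $H_\delta({\bf x}^{k_0},{\bf x}^{k_0-1})=\zeta$ at some $k_0$, then monotonicity (Lemma~\ref{lem:descending_property_auxiliary_function}) together with the sufficient-decrease inequality~\eqref{inequality_for_auxiliary_function}, lower-bounded by a positive multiple of $\|{\bf x}^{k+1}-{\bf x}^k\|^2$ thanks to $\kappa$-strong convexity of $h$, forces the sequence to be eventually constant and convergence is immediate. Otherwise $H_\delta({\bf x}^k,{\bf x}^{k-1})>\zeta$ for all $k$, and I would apply the uniformized KL property on the compact set $\Upsilon$ (where $H_\delta$ is constant and $H_\delta$ is a KL function by hypothesis): for $k$ large,
\[
\varphi'\bigl(H_\delta({\bf x}^k,{\bf x}^{k-1})-\zeta\bigr)\cdot{\rm dist}\bigl(0,\partial H_\delta({\bf x}^k,{\bf x}^{k-1})\bigr)\ge 1.
\]
Combining this with the lower bound from~\eqref{inequality_for_auxiliary_function}, the subgradient bound $C'(\|{\bf x}^{k+1}-{\bf x}^k\|+\|{\bf x}^k-{\bf x}^{k-1}\|)$ from part~(i), concavity of $\varphi$, and Young's inequality, the standard telescoping argument produces $\sum_k\|{\bf x}^{k+1}-{\bf x}^k\|<\infty$. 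The sequence is therefore Cauchy and converges to some ${\bf x}^*$, which by Theorem~\ref{thm:subsquential_convergence}(iv) is a limiting critical point of $\Psi$.

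\textbf{Main obstacle.} The delicate step is part~(i): carefully identifying a computable element of $\partial H_\delta({\bf x}^{k+1},{\bf x}^k)$ requires the Hessian factor $\nabla^2 h({\bf x}^{k+1})$, which is precisely why Assumption~\ref{asm:f2_local_continuity}(ii) is imposed, and replacing $\xi^k$ by $\nabla f_2({\bf x}^k)$ relies on pushing iterates into $\mathcal{N}_0$ via Assumption~\ref{asm:f2_local_continuity}(i) together with Theorem~\ref{thm:subsquential_convergence}(iv). Once this subgradient bound is in place, the short geometric statement~(ii) and the KL-telescoping in~(iii) follow the now-standard recipe.
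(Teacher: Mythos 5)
Your proposal is correct and follows essentially the same route as the paper's proof: the subgradient bound on $\partial H_\delta$ via the subproblem's optimality condition, local smoothness of $f_2$ near ${\rm crit}\,\Psi$, and Lipschitz continuity of $\nabla h$ and $\nabla^2 h$ on bounded sets; the diagonal structure of $\Upsilon$ from Theorem~\ref{thm:subsquential_convergence}(iii); and the two-case split with the uniformized KL property, concavity of the desingularizing function, and the AM--GM telescoping to obtain summability of $\|{\bf x}^k-{\bf x}^{k-1}\|$. The only differences are cosmetic (evaluating the subgradient at $({\bf x}^{k+1},{\bf x}^k)$ rather than $({\bf x}^k,{\bf x}^{k-1})$, and which nonnegative term of the sufficient-decrease inequality supplies the squared-distance lower bound), neither of which affects the argument.
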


\section{Plug-and-Play iBPDCA}
\label{sec:pnp_alg}
In this section, we focus on developing the iBPDCA (Algorithm~\ref{alg:iBPDCA}) in conjunction with a deep prior, namely plug-and-play inertial proximal difference-of-convex algorithm (PnP-iBPDCA). 
Similar to the PnP concept in Euclidean space, our approach seeks to substitute the evaluation of the Bregman proximal operator with a PnP deep prior in the iterative scheme. Specifically, we opt for the Gaussian gradient step denoiser and investigate its theoretical connection with the Bregman proximal operator of a specific implicit functional. While previous works concentrate on adapting the Bregman denoiser to a particular $h$, our method offers a new perspective for a variety of kernel functions by utilizing the same Gaussian denoiser. Our approach eliminates the need to retrain the Bregman denoiser when handling Rician noise removal and phase retrieval tasks (more details can be found in Section \ref{sec:experiments}).
In the following, we first explore the relationship between the gradient step denoiser and Bregman proximal operator to establish a Bregman PnP denoiser; then we propose a novel PnP-iBPDCA framework with convergence guarantee.

\subsection{Bregman PnP Denoiser}\label{subsec4-1}
To ensure $h$ is of Legendre type, we have to make the following additional assumption, which is crucial for reformulating the subproblem \eqref{subproblem:main_subproblem_of_iBPDCA} as a Bregman proximal operator. 
\begin{assumption} \label{asm:essentially_smooth}
    For a given kernel function $h$ satisfying Assumption \ref{asm:assumption_on_h}, we further suppose that $h$ is essentially smooth.
    \end{assumption}
With the above assumption, we now have $h$, which is of Legendre type, and its properties are showcased in the following proposition.

\begin{proposition} {\rm{({\bf Legendre Type})}} \label{prop:Legendre}\cite[Theorem 26.5]{rockafellar2015convex} 
        A convex kernel function $h$ is of Legendre type, if and only if its conjugate $h^*$ is also of Legendre type. Moreover, $\nabla h:  {\rm int}{\rm dom}(h) \rightarrow  {\rm int}{\rm dom}(h^*)$ is bijective map and have following properties:
        \[
            (\nabla h)^{-1} = \nabla h^*,~~ h^*(\nabla h({\bf x})) = \left \langle {\bf x}, \nabla h({\bf x})\right \rangle -h({\bf x}),
        \] 
        and
        \[
             {\rm dom}(\partial h) =  {\rm int}{\rm dom}(h) \text{ with } \partial h({\bf x}) =\{\nabla h({\bf x})\}, \quad \forall {\bf x} \in  {\rm int}{\rm dom}(h).
        \]
\end{proposition}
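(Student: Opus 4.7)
The plan is to follow the classical argument in Rockafellar's \emph{Convex Analysis} (Section~26), decomposing the statement into three parts: (a) the equivalence of the Legendre property under conjugation, (b) the bijectivity of $\nabla h$ with inverse $\nabla h^*$ together with the Fenchel identity, and (c) single-valuedness of $\partial h$ on ${\rm int}{\rm dom}(h)$ and the identification ${\rm dom}(\partial h) = {\rm int}{\rm dom}(h)$. Since $h$ is proper, closed, and convex, the biconjugation theorem $h^{**} = h$ reduces each claim to one direction.

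For (a), I would invoke the dual characterizations of essential smoothness and essential strict convexity: a proper closed convex $h$ is essentially smooth if and only if $h^*$ is essentially strictly convex, and $h$ is essentially strictly convex if and only if $h^*$ is essentially smooth. Combining these two dualities gives that $h$ is Legendre (i.e., essentially smooth \emph{and} essentially strictly convex on ${\rm int}{\rm dom}(h)$) if and only if $h^*$ is Legendre, establishing the first assertion.

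For (b) and (c), the starting point is the subgradient inversion formula: for any proper closed convex $h$, one has ${\bf y} \in \partial h({\bf x})$ if and only if ${\bf x} \in \partial h^*({\bf y})$, coupled with the Fenchel--Young equality case $h({\bf x}) + h^*({\bf y}) = \langle {\bf x}, {\bf y}\rangle$. Essential smoothness of $h$ forces $\partial h({\bf x}) = \emptyset$ for ${\bf x}$ on the boundary of ${\rm dom}(h)$, while differentiability on ${\rm int}{\rm dom}(h)$ yields $\partial h({\bf x}) = \{\nabla h({\bf x})\}$ there; together this gives ${\rm dom}(\partial h) = {\rm int}{\rm dom}(h)$ with $\partial h = \{\nabla h\}$. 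Applying the same analysis to $h^*$ (Legendre by part (a)) and then using the subgradient inversion produces the bijection $\nabla h : {\rm int}{\rm dom}(h) \to {\rm int}{\rm dom}(h^*)$ with inverse $\nabla h^*$. Substituting ${\bf y} = \nabla h({\bf x})$ into the Fenchel--Young equality yields $h^*(\nabla h({\bf x})) = \langle {\bf x}, \nabla h({\bf x})\rangle - h({\bf x})$.

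The main obstacle is justifying the duality between essential smoothness and essential strict convexity used in part (a). This is the technically delicate step in Rockafellar's exposition, requiring a careful analysis of the boundary behavior of subdifferentials and the normal cones of sublevel sets. Since the proposition is explicitly cited as Theorem~26.5 of Rockafellar, the role of this ``proof'' is to identify the ingredients and defer the technical duality argument to the standard reference.
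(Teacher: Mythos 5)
Your outline is correct and is exactly the classical argument behind the citation: the paper itself gives no proof of this proposition, deferring entirely to \cite[Theorem 26.5]{rockafellar2015convex}, and your decomposition (essential smoothness/essential strict convexity duality, subgradient inversion, Fenchel--Young equality, and single-valuedness of $\partial h$ on ${\rm int}{\rm dom}(h)$) matches the standard proof in that reference. Nothing to flag beyond the fact that, like the paper, the technically delicate duality step is appropriately deferred to Rockafellar rather than reproduced.
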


By leveraging Assumption \ref{asm:assumption_on_h} and Proposition \ref{prop:Legendre}, we can reformulate the subproblem \eqref{subproblem:main_subproblem_of_iBPDCA} into the evaluation of a Bregman proximal operator.
\begin{lemma}
Let $h$ be a kernel function satisfying Assumption~\ref{asm:assumption_on_h},  Assumption~\ref{asm:essentially_smooth} and \[\nabla h({\bf y}^k) - \lambda \left(\nabla f_1({\bf y}^k) -\xi^k\right) \in  {\rm dom}(h^*),\] then the subproblem~\eqref{subproblem:main_subproblem_of_iBPDCA} can be reformulated as
\begin{equation} \nonumber
     {\bf x}^{k+1} = {\rm prox}_{\lambda g}^h \circ \nabla h^* \left(\nabla h({\bf y}^k) - \lambda \left(\nabla f_1({\bf y}^k) -\xi^k\right)\right).
\end{equation}
\end{lemma}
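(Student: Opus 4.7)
The plan is to show that the subproblem in \eqref{subproblem:main_subproblem_of_iBPDCA}, after expanding the Bregman distance and collecting all terms independent of ${\bf x}$ as additive constants (which do not affect the argmin), has an objective that coincides with $\lambda g({\bf x}) + D_h({\bf x}, {\bf v})$ for the specific choice ${\bf v} = \nabla h^{*}(\nabla h({\bf y}^k) - \lambda(\nabla f_1({\bf y}^k) - \xi^k))$. That would match exactly the definition of ${\rm prox}_{\lambda g}^{h}({\bf v})$ given in Definition~\ref{bregman_proximal_mapping}.

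First, using the second equality of \eqref{T_lambda} (which relies on $\overline{{\rm dom}(h)} = \mathcal{X}$ from Assumption~\ref{asm:assumption_on_h}(i)), I would drop the constraint ${\bf x} \in \mathcal{X}$ in \eqref{subproblem:main_subproblem_of_iBPDCA} and scale the objective by $\lambda > 0$ so that the $\lambda g({\bf x})$ term appears. Then I would expand
\[
D_h({\bf x}, {\bf y}^k) = h({\bf x}) - h({\bf y}^k) - \langle \nabla h({\bf y}^k), {\bf x} - {\bf y}^k \rangle
\]
and combine the linear-in-${\bf x}$ pieces with $\lambda \langle \nabla f_1({\bf y}^k) - \xi^k, {\bf x} \rangle$, so that the objective becomes
\[
\lambda g({\bf x}) + h({\bf x}) - \langle \nabla h({\bf y}^k) - \lambda(\nabla f_1({\bf y}^k) - \xi^k),\, {\bf x} \rangle + C_1,
\]
where $C_1$ collects constants independent of ${\bf x}$.

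Next, under Assumption~\ref{asm:essentially_smooth}, $h$ is of Legendre type, so Proposition~\ref{prop:Legendre} gives $\nabla h^{*} = (\nabla h)^{-1}$ as a bijection between ${\rm int}\,{\rm dom}(h^{*})$ and ${\rm int}\,{\rm dom}(h)$. Combined with the hypothesis that $\nabla h({\bf y}^k) - \lambda(\nabla f_1({\bf y}^k) - \xi^k) \in {\rm dom}(h^{*})$, the point
\[
{\bf v} := \nabla h^{*}\bigl(\nabla h({\bf y}^k) - \lambda(\nabla f_1({\bf y}^k) - \xi^k)\bigr)
\]
is well defined and satisfies $\nabla h({\bf v}) = \nabla h({\bf y}^k) - \lambda(\nabla f_1({\bf y}^k) - \xi^k)$. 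Substituting this identity back into the linear term lets me rewrite the objective as
\[
\lambda g({\bf x}) + h({\bf x}) - \langle \nabla h({\bf v}), {\bf x} \rangle + C_1 = \lambda g({\bf x}) + D_h({\bf x}, {\bf v}) + C_2,
\]
where $C_2 = C_1 - h({\bf v}) - \langle \nabla h({\bf v}), {\bf v}\rangle$ is again independent of ${\bf x}$. Taking the argmin over ${\bf x}$ yields exactly ${\rm prox}_{\lambda g}^{h}({\bf v})$, which is the claim.

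The computation itself is routine algebra; the only subtlety is the domain bookkeeping, namely checking that ${\bf v}$ lies in ${\rm int}\,{\rm dom}(h)$ so that $D_h(\cdot, {\bf v})$ and ${\rm prox}_{\lambda g}^{h}({\bf v})$ are legitimate objects in the sense of Definition~\ref{def:breg_dist} and Definition~\ref{bregman_proximal_mapping}. This is precisely what the Legendre-type bijection from Proposition~\ref{prop:Legendre} secures, once the hypothesis $\nabla h({\bf y}^k) - \lambda(\nabla f_1({\bf y}^k) - \xi^k) \in {\rm dom}(h^{*})$ is in place. I expect this to be the main place one must be careful; otherwise the argument is a direct manipulation of the Bregman distance.
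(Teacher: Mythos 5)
Your argument is correct, but it takes a different route from the paper. The paper derives the identity by passing through first-order optimality conditions: it writes $0 \in \partial g({\bf x}) + \nabla f_1({\bf y}^k) - \xi^k + \frac{1}{\lambda}(\nabla h({\bf x}) - \nabla h({\bf y}^k))$, introduces $p_\lambda({\bf y}^k) = \nabla h^*(\nabla h({\bf y}^k) - \lambda(\nabla f_1({\bf y}^k) - \xi^k))$, uses $\nabla h \circ \nabla h^* = I$ to rewrite the inclusion as $0 \in \partial g({\bf x}) + \frac{1}{\lambda}(\nabla h({\bf x}) - \nabla h(p_\lambda({\bf y}^k)))$, and then recognizes this as the optimality condition of the Bregman proximal problem. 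You instead manipulate the objective directly, showing that after expanding $D_h({\bf x},{\bf y}^k)$ and absorbing the linear term, the subproblem's objective equals $\lambda g({\bf x}) + D_h({\bf x},{\bf v})$ up to an additive constant, so the argmin sets coincide by inspection. Your route is arguably the more robust of the two here: since $g$ is only weakly convex, the first-order inclusion is a necessary but not sufficient condition for a minimizer, so the paper's argument strictly identifies the critical-point sets of the two problems rather than their argmin sets, whereas your constant-shift identity equates the minimizer sets outright. Both proofs rely on the same ingredients (Proposition~\ref{prop:Legendre} for $(\nabla h)^{-1} = \nabla h^*$, the domain hypothesis to make ${\bf v}$ well defined, and $\overline{{\rm dom}(h)} = \mathcal{X}$ to drop the constraint), so the difference is purely in how the Legendre inversion is deployed.
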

\begin{proof}
From the first-order optimality condition of the subproblem~\eqref{subproblem:main_subproblem_of_iBPDCA}, we obtain
\[ 
    0 \in \partial g({\bf x}) + \nabla f_1({\bf y}^k) -\xi^k +\frac{1}{\lambda} \left(\nabla h({\bf x}) - \nabla h({\bf y}^k)\right) . 
\]
With the condition that  $\nabla h({\bf y}^k) - \lambda \left(\nabla f_1({\bf y}^k) -\xi^k\right) \in  {\rm dom}(h^*)$, we can define
\[p_\lambda({\bf y}^k) = \nabla h^*\left( \nabla h({\bf y}^k) -\lambda \left(\nabla f_1({\bf y}^k)-\xi^k\right)\right).\]
According to Proposition~\ref{prop:Legendre}, we have $\nabla h^* = (\nabla h)^{-1}$ for all Legendre type function $h$. Therefore, $\nabla h \circ \nabla h^* = I$ and we can simplify the first-order optimality condition as 
\[
    0 \in \partial g({\bf x}) +\frac{1}{\lambda} \left(\nabla h({\bf x}) - \nabla h\left(p_\lambda({\bf y}^k)\right) \right).  
\]
Then, with the definition of Bregman proximal mapping \eqref{bregman_proximal_mapping}, we can rewrite ${\bf x}^{k+1}$ as 
\begin{equation} \label{eq:subproblem_breg_prox_oper}
    \begin{aligned}
    {\bf x}^{k+1} &= \argmin_{{\bf x} \in \mathcal{X}} { \left\{ g({\bf x})  +\frac{1}{\lambda} D_h(x,p_\lambda({\bf y}^k)) \right \}} \\
    &= {\rm prox}_{\lambda g}^h \left(p_\lambda({\bf y}^k) \right)\\
    &= {\rm prox}_{\lambda g}^h \circ \nabla h^* \left(\nabla h({\bf y}^k) - \lambda \left(\nabla f_1({\bf y}^k) -\xi^k\right)\right).
    \end{aligned}
\end{equation}
This completes the proof.
\end{proof}

As we can see, the subproblem ${\bf x}^{k+1}$ can be seen as the evaluation of a Bregman proximal operator. In Euclidean geometry, an off-the-shelf denoiser can replace the proximal operator, transforming the original minimization problem into a weakly-convex potential \citep{hurault2022gradient, hurault2022proximal}. However, a generalized PnP framework in Bregman geometry has not yet been established. \cite{hurault2024convergent} has recently proposed a novel approach that replaces the Bregman proximal operator with a Bregman score denoiser $\mathcal{B}_\gamma$ to solve the Poisson inverse problem with Burg entropy (\ie\ $h({\bf x}) = -\sum_i x_i$). \textit{The exploration of generalizing their proposed method to other kernel functions $h$ remains an open question.} For instance, the kernel function $h= \frac{1}{4} \Vert \cdot \Vert^4 +\frac{1}{2} \Vert \cdot \Vert^2$ for phase retrieval problem cannot be associated with a probability distribution in the Bregman noise model as follows:
\[
    \text{for } {\bf x , y} \in  {\rm dom}({\bf x}) \times  {\rm int}{\rm dom}(h), \quad p({\bf y|x}) := \exp\{-\gamma D_h({\bf x,y}) + \rho({\bf x})\},
\]
since there is no probability distribution can be associated with 
\[
    p({\bf y|x}) := \exp \left[ -\gamma\left(\frac{1}{2}\Vert {\bf x}- {\bf y} \Vert^2 + \frac{1}{4}\Vert {\bf x} \Vert^4 - \frac{1}{4} \Vert {\bf y} \Vert^4 -\left \langle \Vert {\bf y} \Vert^2 {\bf y, x-y} \right \rangle \right) + \rho({\bf x}) \right].
\]

Motivated by the above example, we correlate the gradient step denoiser \citep{hurault2022proximal,hurault2022gradient} with the Bregman proximal operator to perform PnP. Our method accommodates commonly used Legendre kernels in solving DC problems.

First, we have to review some properties for Bregman proximal mapping. Previous work from \cite{hurault2024convergent} has extended the work from \cite{gribonval2020characterization}, which characterize the Bregman proximal mapping $\varphi({\bf y}) \in \argmin_{{\bf x}} \left\{ D_h({\bf y}, {\bf x}) + \phi({\bf x}) \right\}
$. However, Bregman distance is asymmetric \citep{bauschke2017descent} and hence it is not suitable for solving problems like~\eqref{eq:subproblem_breg_prox_oper}. \cite{hurault2024convergent} modified the relation as:

\begin{proposition}\label{prop:charac_bregman} \citep{hurault2024convergent}
Let $h$ be a function of Legendre type on $\mathbb{R}^n$. 
Let $\psi : \mathbb{R}^n \to \mathbb{R}\cup \{ +\infty \}$ be a proper lower-semicontinuous convex function on ${\rm int} {\rm dom}(h^*)$, which satisfy ${\varphi(\nabla h^*({\bf z})) \in \partial \psi({\bf z})}$, ${\bf z} \in {\rm int} {\rm dom}(h^*)$, for a certain function $\varphi: {\rm int} {\rm dom}(h) \to \mathbb{R}^n$. 
Then, for the function $\phi$ defined by
\begin{equation} \nonumber
    \phi({\bf x}) := \left\{\begin{array}{ll}
        \langle \varphi({\bf y}), \nabla h({\bf y}) \rangle - h(\varphi({\bf y})) - \psi(\nabla h({\bf y})) \ \ \text{for} \ {\bf y} \in \varphi^{-1}({\bf x}), \ \ &\text{if} \ {\bf x} \in {\rm Im}(\varphi), \\
        \hspace{-3pt}+\infty, \ \ &\text{otherwise}.
    \end{array}\right.
\end{equation}
It holds that ${\rm Im}(\varphi) \subset {\rm dom}(\phi)$, and for each ${\bf y} \in {\rm  int  dom}(h)$,
\begin{equation} \nonumber
    \varphi({\bf y}) \in \argmin_{{\bf x} \in \mathbb{R}^n} \left\{ D_h({\bf x},{\bf y}) + \phi({\bf x}) \right\}.
\end{equation}
\end{proposition}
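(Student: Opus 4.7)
The plan is to derive the identity $\varphi({\bf y}) \in \argmin\{D_h(\cdot,{\bf y}) + \phi\}$ as a direct translation of the subgradient inequality for $\psi$ at $\nabla h({\bf y})$, pulled back through the kernel $h$. Fix ${\bf y} \in {\rm int dom}(h)$ and set $F({\bf x}) := D_h({\bf x},{\bf y}) + \phi({\bf x})$.

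First I would evaluate $F$ at ${\bf x} = \varphi({\bf y})$. Since ${\bf y}$ itself lies in $\varphi^{-1}(\varphi({\bf y}))$, substituting the explicit formula for $\phi(\varphi({\bf y}))$ and expanding $D_h(\varphi({\bf y}),{\bf y})$ produces a cancellation of the two $h(\varphi({\bf y}))$ terms, leaving
\[
F(\varphi({\bf y})) \;=\; \langle {\bf y},\nabla h({\bf y})\rangle - h({\bf y}) - \psi(\nabla h({\bf y})) \;=\; h^*(\nabla h({\bf y})) - \psi(\nabla h({\bf y})),
\]
via the Legendre identity $h^*(\nabla h({\bf y})) = \langle {\bf y},\nabla h({\bf y})\rangle - h({\bf y})$ from Proposition~\ref{prop:Legendre}.

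Next, for any competitor ${\bf x} \in {\rm Im}(\varphi)$, I would pick ${\bf y}_0 \in \varphi^{-1}({\bf x})$ and compute $F({\bf x})$ analogously. The $h({\bf x})$ contributions again cancel, and the desired inequality $F(\varphi({\bf y})) \le F({\bf x})$ reduces, after rearrangement, to the bound
\[
\psi(\nabla h({\bf y}_0)) - \psi(\nabla h({\bf y})) \;\le\; \langle {\bf x},\,\nabla h({\bf y}_0) - \nabla h({\bf y})\rangle.
\]
This is precisely the subgradient inequality for the convex function $\psi$ at the base point $\nabla h({\bf y}_0)$ with subgradient ${\bf x} = \varphi({\bf y}_0)$. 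The subgradient membership comes from applying the hypothesis $\varphi(\nabla h^*({\bf z})) \in \partial\psi({\bf z})$ at ${\bf z} = \nabla h({\bf y}_0) \in {\rm int dom}(h^*)$, using $\nabla h^* = (\nabla h)^{-1}$ (again Proposition~\ref{prop:Legendre}). Any ${\bf x} \notin {\rm Im}(\varphi)$ satisfies $\phi({\bf x}) = +\infty$, so the inequality holds trivially, and $\varphi({\bf y})$ is a minimizer.

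The main obstacle is ensuring that $\phi$ is actually well-defined on ${\rm Im}(\varphi)$ when $\varphi$ fails to be injective: two pre-images ${\bf y}_1,{\bf y}_2$ of the same ${\bf x}$ must yield identical values. I would settle this via Fenchel--Young duality. The hypothesis applied at ${\bf z} = \nabla h({\bf y}_i)$ gives ${\bf x} \in \partial\psi(\nabla h({\bf y}_i))$, hence
\[
\psi(\nabla h({\bf y}_i)) + \psi^*({\bf x}) \;=\; \langle {\bf x},\,\nabla h({\bf y}_i)\rangle, \qquad i = 1,2.
\]
Subtracting these two equalities yields $\langle {\bf x},\nabla h({\bf y}_1)\rangle - \psi(\nabla h({\bf y}_1)) = \langle {\bf x},\nabla h({\bf y}_2)\rangle - \psi(\nabla h({\bf y}_2))$, which is exactly the ambiguous piece of the definition of $\phi({\bf x})$. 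The inclusion ${\rm Im}(\varphi) \subset {\rm dom}(\phi)$ then follows because the defining expression is finite for any admissible choice of ${\bf y} \in \varphi^{-1}({\bf x})$, completing the proof.
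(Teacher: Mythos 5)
Your argument is correct. The paper itself does not prove this proposition --- it is imported verbatim from the cited reference and used as a black box in the construction of the Bregman denoiser --- so there is no in-paper proof to compare against; your derivation is essentially the standard one for this Gribonval--Nikolova-type characterization. The two computations check out: evaluating $D_h(\cdot,{\bf y})+\phi$ at $\varphi({\bf y})$ and at a competitor $\varphi({\bf y}_0)$ does cancel the $h$-terms, and the resulting inequality
\[
\psi(\nabla h({\bf y})) - \psi(\nabla h({\bf y}_0)) \;\ge\; \langle \varphi({\bf y}_0),\, \nabla h({\bf y}) - \nabla h({\bf y}_0)\rangle
\]
is exactly the subgradient inequality for $\psi$ at $\nabla h({\bf y}_0)$, with the membership $\varphi({\bf y}_0)\in\partial\psi(\nabla h({\bf y}_0))$ obtained from the hypothesis at ${\bf z}=\nabla h({\bf y}_0)\in{\rm int}\,{\rm dom}(h^*)$ via the bijectivity of $\nabla h$ (Proposition~\ref{prop:Legendre}). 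Your Fenchel--Young argument for well-definedness is also sound: ${\bf x}\in\partial\psi(\nabla h({\bf y}_i))$ forces $\langle{\bf x},\nabla h({\bf y}_i)\rangle-\psi(\nabla h({\bf y}_i))=\psi^*({\bf x})$ independently of the pre-image, so $\phi({\bf x})=\psi^*({\bf x})-h({\bf x})$ is unambiguous and finite-from-above, giving ${\rm Im}(\varphi)\subset{\rm dom}(\phi)$. The only point worth flagging is implicit regularity: for the cancellation at $\varphi({\bf y})$ to be more than formal one needs $h(\varphi({\bf y}))<\infty$, i.e.\ ${\rm Im}(\varphi)\subset{\rm dom}(h)$; this is not stated in the proposition but is assumed wherever the paper applies it (e.g.\ ${\rm Im}(\mathcal{B}_\gamma)\subset{\rm int}\,{\rm dom}(h)$), so it is a hypothesis gap inherited from the statement rather than a flaw in your reasoning.
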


Employing a similar methodology as \cite{hurault2022gradient, hurault2022proximal, hurault2024convergent}, we define a Gaussian gradient step (GS) denoiser as 
\begin{equation}\label{gaussian_denoiser}
    \mathcal{D}_\gamma := I - \nabla g_\gamma, 
\end{equation}
where $g_\gamma$ is the deep differentially parameterized potential and $\nabla g_\gamma$ should be $L_{g_\gamma}$-Lipschitz continuous with $L_{g_\gamma}<1$, defined by 
\begin{equation}\label{g_gamma}
    g_\gamma({\bf x}) = \frac{1}{2} \Vert {\bf x} - N_\gamma({\bf x}) \Vert^2,
\end{equation}
with $N_\gamma$ the deep convolutional neural network. In practice, we use lightweight DRUNet \citep{zhang2021plug,hurault2022gradient} to construct $N_\gamma$. 
As discussed in \cite{hurault2022proximal}, $\mathcal{D}_\gamma$ defined in \eqref{gaussian_denoiser} is injective, and $\forall x \in \mathcal{X}$, $\mathcal{D}_\gamma({\bf x})=\operatorname{Prox}_{\theta_\gamma}({\bf x})$, with $\theta_\gamma: \mathcal{X} \rightarrow \mathbb{R} \cup\{+\infty\}$ defined by
\begin{equation}\label{theta_gamma}
   \theta_\gamma({\bf x}):=\left\{\begin{array}{lc}
 g_\gamma (\mathcal{D}_\gamma^{-1}({\bf x}))-\frac{1}{2}\left\|\mathcal{D}_\gamma^{-1}({\bf x})-{\bf x}\right\|^2, & \text { if } {\bf x} \in \operatorname{Im}\left(\mathcal{D}_\gamma\right), \\
+\infty, 
& \text {otherwise}.
\end{array}\right.
\end{equation}
Moreover, it follows from Propostion 3.1 of \cite{hurault2022proximal} that $\theta_\gamma$ is $\frac{L_{g_\gamma}}{1+g_\gamma}$-weakly convex. To ensure the differentiability of the network, we follow the approach as \cite{hurault2022proximal} and replace the ReLU activation functions with softplus activation. Next, we discuss the relationship between GS denoiser $\mathcal{D}_\gamma$ and the Bregman proximal operator.

\begin{proposition} \label{prop:bregman_denoiser_defined}
    Let $h$ be $\mathcal{C}^2$ and of Legendre type on $\mathbb{R}^n$ and $\psi_\gamma : \mathbb{R}^n \rightarrow \mathbb{R} \cup \{+ \infty \}$ be a function with first-order derivatives as
    \begin{equation}\label{add1}
         \nabla \psi_\gamma({\bf y}):= \begin{cases} \nabla^2 h({\bf y}) \cdot ({\bf y}-\nabla g_\gamma ({\bf y})), & \text { if } {\bf y} \in  {\rm int}{\rm dom}(h),\\
        +\infty, & \text { otherwise,}\end{cases}       
    \end{equation}
    where $g_\gamma:\mathbb{R}^n \rightarrow \mathbb{R} \cup \{+ \infty \}$ be a proper and differentiable potential defined in \eqref{g_gamma}.
   Suppose $\psi_\gamma \circ \nabla h^*$ is convex on $ {\rm int}{\rm dom}(h^*)$ and ${\rm Im}(\mathcal{B}_\gamma )\subset {\rm dom}(\phi_\gamma) = {\rm int}{\rm dom}(h)$. Then,  for the functional $\phi_\gamma: \mathbb{R}^n \rightarrow \mathbb{R} \cup \{+\infty\}$ defined by
\begin{equation}\label{add2}
   \phi_\gamma({\bf x}) := \begin{cases}
       \left \langle {\bf x} , \nabla h({\bf y}) \right \rangle - h({\bf x}) -\psi_\gamma({\bf y}) \text{ for } {\bf y} \in\mathcal{D}_\gamma^{-1}({\bf x}), & \text { if } {\bf x}\in{\rm Im}(\mathcal{D}_\gamma),
       \\ +\infty, & \text { otherwise,}
   \end{cases}
\end{equation}
we have  
\begin{equation} \nonumber
    \mathcal{D}_\gamma ({\bf y}) \in {\rm prox}_{\phi_\gamma}^h ({\bf y}),  
\end{equation} 
for any $~ {\bf y} \in  {\rm int}{\rm dom}(h)$, where $\mathcal{D}_\gamma$ is the GS denoiser in \eqref{gaussian_denoiser}.
\end{proposition}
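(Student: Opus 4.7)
The plan is to apply the characterization of Bregman proximal mappings in Proposition~\ref{prop:charac_bregman} with the choices $\varphi = \mathcal{D}_\gamma$ and $\psi = \psi_\gamma \circ \nabla h^*$. The proposition will then produce a function whose formula matches \eqref{add2} exactly, and yield $\mathcal{D}_\gamma({\bf y}) \in {\rm prox}^h_{\phi_\gamma}({\bf y})$ directly. The bulk of the work is to verify the key hypothesis of Proposition~\ref{prop:charac_bregman}, namely that $\mathcal{D}_\gamma(\nabla h^*({\bf z})) \in \partial(\psi_\gamma \circ \nabla h^*)({\bf z})$ for every ${\bf z} \in {\rm int}{\rm dom}(h^*)$.

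To check this inclusion, I will use that $\psi_\gamma \circ \nabla h^*$ is assumed convex on ${\rm int}{\rm dom}(h^*)$, so its subdifferential coincides with its gradient wherever the composition is differentiable. Since $h$ is $\mathcal{C}^2$ and of Legendre type, Proposition~\ref{prop:Legendre} gives $\nabla h^* = (\nabla h)^{-1}$, and the inverse function theorem yields $\nabla h^* \in \mathcal{C}^1$ on ${\rm int}{\rm dom}(h^*)$ with $\nabla^2 h^*({\bf z}) = [\nabla^2 h({\bf y})]^{-1}$ whenever ${\bf y} = \nabla h^*({\bf z})$. Combined with the differentiability of $\psi_\gamma$ on ${\rm int}{\rm dom}(h)$ given by \eqref{add1}, the chain rule delivers
\begin{equation*}
  \nabla(\psi_\gamma \circ \nabla h^*)({\bf z}) = \nabla^2 h^*({\bf z}) \cdot \nabla \psi_\gamma(\nabla h^*({\bf z})) = [\nabla^2 h({\bf y})]^{-1} \cdot \nabla^2 h({\bf y}) \cdot ({\bf y} - \nabla g_\gamma({\bf y})) = \mathcal{D}_\gamma({\bf y}),
\end{equation*}
which is exactly $\mathcal{D}_\gamma(\nabla h^*({\bf z}))$ as required.

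With this verified, Proposition~\ref{prop:charac_bregman} tells us that $\mathcal{D}_\gamma({\bf y}) \in \argmin_{{\bf x}}\{D_h({\bf x},{\bf y}) + \widetilde\phi({\bf x})\}$, where
\begin{equation*}
  \widetilde\phi({\bf x}) = \langle \mathcal{D}_\gamma({\bf y}), \nabla h({\bf y}) \rangle - h(\mathcal{D}_\gamma({\bf y})) - (\psi_\gamma \circ \nabla h^*)(\nabla h({\bf y}))
\end{equation*}
for any ${\bf y} \in \mathcal{D}_\gamma^{-1}({\bf x})$, and $+\infty$ otherwise. Substituting ${\bf x} = \mathcal{D}_\gamma({\bf y})$ and using $\nabla h^* \circ \nabla h = I$ on ${\rm int}{\rm dom}(h)$, this simplifies to $\widetilde\phi({\bf x}) = \langle {\bf x}, \nabla h({\bf y}) \rangle - h({\bf x}) - \psi_\gamma({\bf y})$, which is precisely $\phi_\gamma({\bf x})$ in \eqref{add2}. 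The hypothesis ${\rm Im}(\mathcal{D}_\gamma) \subset {\rm dom}(\phi_\gamma) = {\rm int}{\rm dom}(h)$ ensures the definition is consistent and $\phi_\gamma$ is proper. Therefore $\mathcal{D}_\gamma({\bf y}) \in {\rm prox}^h_{\phi_\gamma}({\bf y})$.

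The main obstacle I anticipate is a careful handling of the domain and regularity conditions so that Proposition~\ref{prop:charac_bregman} truly applies: confirming that $\psi_\gamma \circ \nabla h^*$ is proper and lower semicontinuous (and not merely convex) on ${\rm int}{\rm dom}(h^*)$, that the injectivity of $\mathcal{D}_\gamma$ established in \cite{hurault2022proximal} makes $\mathcal{D}_\gamma^{-1}({\bf x})$ a singleton for ${\bf x} \in {\rm Im}(\mathcal{D}_\gamma)$ so that the piecewise formula for $\phi_\gamma$ is well defined, and that the chain-rule identification of $\nabla^2 h^*$ with $[\nabla^2 h]^{-1}$ is legitimate (this uses strict convexity and essential smoothness of $h$). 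The algebraic identification of $\widetilde\phi$ with $\phi_\gamma$ is then only a bookkeeping step.
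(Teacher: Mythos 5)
Your proposal is correct and follows essentially the same route as the paper's proof: both invoke Proposition~\ref{prop:charac_bregman} with $\psi = \psi_\gamma \circ \nabla h^*$, verify the subdifferential hypothesis via the chain rule together with $\nabla^2 h^*(\nabla h({\bf y})) = (\nabla^2 h({\bf y}))^{-1}$, and then simplify the resulting potential to the formula in \eqref{add2}. The only cosmetic difference is ordering — the paper first defines an auxiliary map $\mathcal{B}_\gamma = \nabla(\psi_\gamma \circ \nabla h^*)\circ\nabla h$, applies the characterization, and only at the end identifies $\mathcal{B}_\gamma$ with $\mathcal{D}_\gamma$, whereas you perform that identification up front.
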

 \begin{proof}
    We first define $ \mathcal{B}_\gamma : {\rm int} {\rm dom} (h) \rightarrow \mathbb{R}^n$ for all $ {\bf y} \in {\rm int} {\rm dom}(h)$ by
    \begin{equation} \label{eq:breg_denoiser_def}
        \mathcal{B}_\gamma({\bf y}) =\nabla (\psi_\gamma \circ \nabla h^*) \circ \nabla h({\bf y}).
    \end{equation}
    Then, we can verify that our given $\mathcal{B}_\gamma$ satisfy requirements in Proposition~\ref{prop:charac_bregman} with $\varphi \equiv \mathcal{B}_\gamma$. Since $h$ is of Legendre type, we have $\nabla h^* : {\rm int} {\rm dom}(h^*) \rightarrow {\rm int} {\rm dom}(h)$ is bijective map from Proposition~\ref{prop:Legendre}. For each ${\bf z} \in {\rm int} {\rm dom}(h^*)$, we have $\nabla h^* ({\bf z}) \in {\rm int} {\rm dom}(h)$. Hence,
    \[
        \mathcal{B}_\gamma(\nabla h^*({\bf z})) =\nabla (\psi_\gamma \circ \nabla h^*) ({\bf z}),
    \]
    where $\psi_\gamma \circ \nabla h^*$ is the $\psi$ in Proposition~\ref{prop:charac_bregman}. Since we have already assumed that  $\psi_\gamma \circ \nabla h^*$ is convex on ${\rm int} {\rm dom} (h^*)$, we can guarantee the existence of $\phi_\gamma: \mathbb{R}^n \rightarrow \mathbb{R} \cup \{ +\infty \}$ through Proposition~\ref{prop:charac_bregman}. For any ${\bf y} \in {\rm int} {\rm dom} (h)$ we have
    \begin{equation}\label{add3}\begin{split}
     \mathcal{B}_\gamma({\bf y}) &\in \argmin_{{\bf x} \in \mathbb{R}^n} \{ D_h({\bf x},{\bf y}) + \phi_\gamma ({\bf x})\} \\
        &= {\rm prox}_{\phi_\gamma}^h ({\bf y}).   
    \end{split}       
    \end{equation}
    Furthermore, with Proposition~\ref{prop:charac_bregman}, $\phi_\gamma$ can be expressed explicitly as
    \[
        \phi_\gamma({\bf x}) := \begin{cases} 
            \left \langle \mathcal{B}_\gamma({\bf y}), \nabla h({\bf y}) \right \rangle - h(\mathcal{B}_\gamma({\bf y})) - \psi_\gamma \circ \nabla h^*(\nabla h({\bf y})) ~ {\rm for} ~ {\bf y} \in \mathcal{B}_\gamma^{-1}({\bf x}),\!\! & {\rm if} ~ {\bf x} \in {\rm Im}(\mathcal{B}_\gamma), \\
            +\infty, & {\rm otherwise.}
        \end{cases}
    \]
    It follows from ${\bf y} \in \mathcal{B}_\gamma^{-1} ({\bf x})$ that ${\bf x} = \mathcal{B}_\gamma({\bf y})$. This implies
    \begin{align*}
        \phi_\gamma(\mathcal{B}_\gamma({\bf y})) &= \left \langle\mathcal{B}_\gamma({\bf y}) , \nabla h({\bf y}) \right \rangle - h(\mathcal{B}_\gamma({\bf y})) -\psi_\gamma \circ \nabla h^*(\nabla h({\bf y})) \\
        &= \left \langle\mathcal{B}_\gamma({\bf y}) , \nabla h({\bf y}) \right \rangle - h(\mathcal{B}_\gamma({\bf y})) -\psi_\gamma({\bf y}),
    \end{align*}
    which leads us to
    \[
        \phi_\gamma({\bf x})= \left \langle {\bf x} , \nabla h({\bf y}) \right \rangle - h({\bf x}) -\psi_\gamma({\bf y}).
   \]  
    On the other hand, $\mathcal{B}_\gamma$ in Equation~\eqref{eq:breg_denoiser_def} can be simplified as 
    \begin{align*}
        \mathcal{B}_\gamma({\bf y}) & =\nabla (\psi_\gamma \circ \nabla h^*) \circ \nabla h({\bf y}) \\
        &= \nabla^2 h^* (\nabla h({\bf y})) \cdot \nabla \psi_\gamma \circ \nabla h^* \circ \nabla h({\bf y}) \\
        &= \nabla^2 h^*(\nabla h({\bf y})) \cdot \nabla \psi_\gamma({\bf y}).
    \end{align*}
    Since $h$ is assumed to be strictly convex from Assumption~\ref{asm:assumption_on_h} on ${\rm int} {\rm dom} (h)$, Hessian of $h$ (denoted as $\nabla^2 h({\bf y})$) is invertible. Since $h$ is of Legendre type from Assumption~\ref{asm:essentially_smooth}, we have $\nabla h^*(\nabla h({\bf y})) = {\bf y}$. By taking the first derivative of this relationship, we have $\nabla^2 h^*(\nabla h({\bf y})) = (\nabla^2 h({\bf y}))^{-1}
    $. This leads to 
    \begin{align*}
        \mathcal{B}_\gamma ({\bf y}) &= (\nabla^2 h({\bf y}))^{-1} \cdot \nabla \psi_\gamma ({\bf y}) = {\bf y} - \nabla g_\gamma ({\bf y}),
    \end{align*}
    where the last equality comes from \eqref{add1}. Therefore we obtain $\mathcal{B}_\gamma=\mathcal{D}_\gamma$, and thus $\mathcal{D}_\gamma  
    \in {\rm prox}_{\phi_\gamma}^h $ from \eqref{add3}. This completes the proof.
    \end{proof}

\subsection{The proposed PnP-iBPDCA}
With the discussions in Subsection \ref{subsec4-1}, we turn our attention to specifying $\lambda g: = \phi_\gamma$ in Problem~\eqref{eq:DCA_model}, which is associating the weakly-convex term $g$ with a data-driven implicit objective function.
Specifically, the DC optimization problem with Bregman-based deep prior can be formulated as 
\begin{equation} \label{eq:DCA_with_deep_prior}
   \min_{{\bf x} \in \mathbb{R}^n}~\left\{\Psi_\lambda({\bf x}) := f_1({\bf x}) -f_2({\bf x}) +\frac{1}{\lambda} \phi_\gamma({\bf x})\right\},
\end{equation}
where $\phi_\gamma({\bf x})$ is defined in \eqref{add2}. 
We are now outlining the plug-and-play inertial Bregman proximal DC algorithm for addressing Problem \eqref{eq:DCA_with_deep_prior} in Algorithm~\ref{alg:PnP-iBPDCA} (PnP-iBPDCA). The key distinction between Algorithms~\ref{alg:iBPDCA} and \ref{alg:PnP-iBPDCA} is the scheme of updating ${\bf x}^{k+1}$, where Algorithm~\ref{alg:PnP-iBPDCA} utilizes an off-the-shelve denoiser instead of evaluating the Bregman proximal operator of a specific function $g$. 

\begin{algorithm}[!h]
   \caption{Plug-and-play Inertial Bregman Proximal DC Algorithm (PnP-iBPDCA)}
  \label{alg:PnP-iBPDCA}  
  \begin{algorithmic}[1]
    \Require{Choose a $\kappa$-strongly convex kernel function $h$ in accordance to Assumption~\ref{asm:assumption_on_h} such that $(f_1,h)$ is $L$-smad. Choose $\delta,\epsilon$ with $1> \delta \geq\epsilon>0$ and ${\rm tol}>0$.}
    \State{\textbf{Initialization:} $ {\bf x}^0={\bf x}^{-1} \in {\rm intdom}(h)\text{, and } \frac{1}{\lambda} >  \max \left \{\delta+\frac{\eta}{\kappa} , L \right\}.$} 
     \For{$k = 0, 1, 2,\ldots,$}
        \State{Compute}
        \begin{equation} 
            {\bf y}^{k} = {\bf x}^{k} + \beta_k \left({\bf x}^k - {\bf x}^{k-1}\right),
        \end{equation}
        
        \State{where $\beta_k \in [0,1)$ is chosen such that
        \begin{equation} \label{eq:bregman_distance_condition_on_extrapolation_Rician}
            \lambda\left(\delta-\epsilon\right)D_h({\bf x}^{k-1},{\bf x}^k) \geq D_h({\bf x}^k,{\bf y}^k).
        \end{equation}} 
        
        \If {${\bf y}^k \notin
         {\rm int}{\rm dom}(h)$,}
            \State{${\bf y}^k = {\bf x}^k$}.
        \EndIf
        
        \State{Take $\xi^k \in \partial f_2({\bf x}^k)$, and compute}
        \begin{equation}
            {\bf x}^{k+1} = \mathcal{D}_\gamma \circ \nabla h^* (\nabla h({\bf y}^k) - \lambda (\nabla f_1({\bf y}^k) -\xi^k)).
        \end{equation}
        \If{$\frac{\|{\bf x}_{k+1} - {\bf x}_{k}\|}{\|{\bf x}^k\|} < {\rm tol}$}
            break
        \EndIf
    \EndFor
  \end{algorithmic}
\end{algorithm}

The following is dedicated to establishing the convergence of PnP-iBPDCA (Algorithm \ref{alg:PnP-iBPDCA}). As mentioned in Theorem~\ref{thm:global_convergence} (Global convergence of iBPDCA), we do require objective function $\Psi_\lambda$ to be a KL function to guarantee global convergence of PnP-iBPDCA. Our main focus lies on validating the KL property of the newly proposed deep prior $g = \frac{1}{\lambda}\phi_\gamma$.

\begin{lemma} {\rm{({\bf Validation of KL property of $\phi_\gamma$})} }\label{prop:validation_real_analytic}
     Let $\mathcal{D}_\gamma $ be a GS denoiser defined in \eqref{gaussian_denoiser}. Then $\phi_\gamma$ which is associated with $\mathcal{D}_\gamma$ 
     given in \eqref{add2} 
    is subanalytic, and thus is a KL function.
\end{lemma}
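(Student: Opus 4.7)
The plan is to prove subanalyticity of $\phi_\gamma$ by building it up from real analytic ingredients, and then invoke the classical result of Bolte--Daniilidis--Lewis that proper lower semicontinuous subanalytic functions automatically satisfy the Kurdyka-{\L}ojasiewicz inequality.

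First, I would verify real analyticity of the neural network $N_\gamma$. Since the paper explicitly substitutes ReLU with softplus, $N_\gamma$ becomes a composition of affine maps (weight matrices, biases, convolutions) and the real analytic softplus activation $t\mapsto \log(1+e^t)$. Composition and sum of real analytic functions are real analytic, so $N_\gamma$ is real analytic on $\mathbb{R}^n$. Consequently $g_\gamma({\bf x}) = \tfrac{1}{2}\|{\bf x}-N_\gamma({\bf x})\|^2$ is real analytic, and so are $\nabla g_\gamma$ and hence $\mathcal{D}_\gamma = I - \nabla g_\gamma$. For the kernels used in the applications (quadratic for Rician noise and the quartic-plus-quadratic kernel for phase retrieval), $h$ itself is polynomial and therefore real analytic together with $\nabla h$ and $\nabla^2 h$; consequently $\psi_\gamma$, whose gradient is $\nabla^2 h({\bf y})\cdot({\bf y}-\nabla g_\gamma({\bf y}))$ by \eqref{add1}, is also real analytic on ${\rm int}{\rm dom}(h)$.

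Next, I would handle the inverse $\mathcal{D}_\gamma^{-1}$. Because $\nabla g_\gamma$ is $L_{g_\gamma}$-Lipschitz with $L_{g_\gamma}<1$, the Jacobian $I-\nabla^2 g_\gamma$ of $\mathcal{D}_\gamma$ is uniformly non-singular, so the analytic inverse function theorem gives that $\mathcal{D}_\gamma^{-1}$ is real analytic on the open set ${\rm Im}(\mathcal{D}_\gamma)$. Moreover ${\rm Im}(\mathcal{D}_\gamma)$, being the image of a proper real analytic diffeomorphism, is a subanalytic subset of $\mathbb{R}^n$. Plugging these facts into the explicit formula
\begin{equation}\nonumber
\phi_\gamma({\bf x}) = \langle {\bf x}, \nabla h(\mathcal{D}_\gamma^{-1}({\bf x}))\rangle - h({\bf x}) - \psi_\gamma(\mathcal{D}_\gamma^{-1}({\bf x})),\quad {\bf x}\in{\rm Im}(\mathcal{D}_\gamma),
\end{equation}
exhibits $\phi_\gamma$ as real analytic, and in particular subanalytic, on its effective domain.

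Finally, I would extend this to the full extended-real-valued function. The graph of $\phi_\gamma$ in $\mathbb{R}^n\times \mathbb{R}$ is the graph of a real analytic function over the subanalytic set ${\rm Im}(\mathcal{D}_\gamma)$, which is a subanalytic subset of $\mathbb{R}^{n+1}$; setting $\phi_\gamma \equiv +\infty$ on the complement does not enlarge the graph in $\mathbb{R}^{n+1}$, so the whole graph remains subanalytic. Combined with properness and lower semicontinuity (which follow from the characterization in Proposition~\ref{prop:bregman_denoiser_defined} together with the fact that $\theta_\gamma$ in \eqref{theta_gamma} is lsc), the Bolte--Daniilidis--Lewis theorem yields the KL property.

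The main subtlety I anticipate is justifying that the image ${\rm Im}(\mathcal{D}_\gamma)$, and thus the ``boundary transition'' to $+\infty$, does not destroy subanalyticity in a neighborhood of $\partial{\rm Im}(\mathcal{D}_\gamma)$. This requires a careful appeal to the stability of the subanalytic class under taking images of proper analytic maps; routine verification of real analyticity of $N_\gamma$ and of $\mathcal{D}_\gamma^{-1}$ is comparatively mechanical.
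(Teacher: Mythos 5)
Your proposal is correct and follows essentially the same route as the paper: real analyticity of $N_\gamma$ (via softplus) propagates to $g_\gamma$, $\mathcal{D}_\gamma$ and $\psi_\gamma$, the real analytic inverse function theorem gives analyticity of $\mathcal{D}_\gamma^{-1}$ on ${\rm Im}(\mathcal{D}_\gamma)$, and the explicit formula for $\phi_\gamma$ then exhibits it as real analytic, hence subanalytic, hence KL. The only deviations are cosmetic: you justify invertibility of $J_{\mathcal{D}_\gamma}$ directly from the Lipschitz bound $L_{g_\gamma}<1$ (arguably cleaner than the paper's appeal to positive definiteness through the strict convexity of $\psi_\gamma\circ\nabla h^*$), and you add a careful discussion of why extending by $+\infty$ off ${\rm Im}(\mathcal{D}_\gamma)$ preserves subanalyticity, a point the paper leaves implicit.
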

    \begin{proof}
         Since $\mathcal{D}_\gamma = I_d - \nabla g_\gamma$ from \eqref{gaussian_denoiser} and $g_\gamma = \frac{1}{2} \Vert {\bf x} - \mathcal{N}_\gamma ({\bf x}) \Vert^2$ where $\mathcal{N}_\gamma$ is parameterized with a U-Net with softplus activation function, which is real analytic. From \cite{krantz2002primer}, we know that the sum and composition of real analytic functions are real analytic, leading us to the fact that $N_\gamma$ and $g_\gamma$ are real analytic functions.
    
        For ${\bf y} \in  {\rm int}{\rm dom}(h)$, the Jacobian of $\mathcal{D}_\gamma$ writes as
        \begin{align*}
            J_{\mathcal{D}_\gamma({\bf y})} &= \nabla(\nabla (\psi_\gamma \circ \nabla h^*) \circ \nabla h({\bf y})) \\
            &= \nabla^2 h ({\bf y}) \cdot \nabla^2 (\psi_\gamma \circ \nabla h^*)(\nabla h({\bf y})).
        \end{align*}
        By our assumption that $h$ is of Legendre type, mapping $\nabla h^*: {\rm int}{\rm dom}(h^*) \rightarrow  {\rm int}{\rm dom}(h)$ is bijective. 
        From Assumption~\ref{asm:assumption_on_h}, kernel function $h$ is strictly convex, and hence $\nabla^2 h({\bf y})$ is positive definite. For all ${\bf u} \in \mathbb{R}^n$, we have
        \begin{align*}
            \left \langle J_{\mathcal{D}_\gamma}({\bf y}) {\bf u}, {\bf u} \right \rangle &= \left \langle \nabla^2 h ({\bf y}) \cdot \nabla^2 (\psi_\gamma \circ \nabla h^*)(\nabla h({\bf y})) {\bf u}, {\bf u} \right \rangle \\
            &   > \left \langle \nabla^2 (\psi_\gamma \circ \nabla h^*)(\nabla h({\bf y})) {\bf u}, {\bf u} \right \rangle > 0,
        \end{align*}
        where the last line holds because $\psi_\gamma \circ \nabla h^*$ is strictly convex.
        Therefore $J_{\mathcal{D}_\gamma}$ is positive definite on $ {\rm int}{\rm dom}(h)$. By real analytic inverse function theorem \cite[Theorem 1.5.3]{krantz2002primer}, $\mathcal{D}_\gamma ^{-1}$ is real analytic on ${\rm Im}(\mathcal{D}_\gamma)$. 
       Furthermore, it follows from \cite[Proposition~2.2.3]{krantz2002primer} that the partial derivative of all orders, and the indefinite integral of the real analytic function are real analytic.
        Since $g_\gamma$ and $h$ are real analytic, $\nabla \psi_\gamma$ and hence $\psi_\gamma$ are also real analytic. Since the sum and composition of real analytic functions are real analytic \citep{krantz2002primer}, we finally have $\phi_\gamma$ is real analytic (also subanalytic), and thus is a KL function on its domain. This completes our proof.
\end{proof}    
In line with Assumption~\ref{asm:assumption_on_h}, we require $\phi_\gamma$ to be weakly convex to ensure the convergence of PnP-iBPDCA. Before proceeding, let us. consider an important implication derived from \cite{gribonval2020characterization}. 

\begin{lemma}\cite[Theorem~3]{gribonval2020characterization} \label{thm:convexity_of_deep_prior}
Suppose $\mathcal{Y} \subset \mathbb{R}^n$ be a non-empty set. Let $a:\mathcal{Y}\rightarrow \mathbb{R} \cup  \{ +\infty\}$, $b:\mathbb{R}^n\rightarrow \mathbb{R} \cup \{ + \infty\}$, and $A: \mathcal{Y} \rightarrow \mathbb{R}^n$ be arbitrary function, and $\varphi:\mathcal{Y}\rightarrow \mathbb{R}^n$. 
Suppose the following conditions hold: (i) there exists $\phi:\mathbb{R}^n \rightarrow \mathbb{R} \cup \{ +\infty \}$ such that $\varphi({\bf y}) \in \argmin_{{\bf x}\in \mathbb{R}^n} \{D({\bf x},{\bf y}) + \phi({\bf x})\}$ for each ${\bf y} \in \mathcal{Y}$, with $D({\bf x},{\bf y}) =a({\bf y}) -\left\langle {\bf x}, A({\bf y}) \right \rangle +b({\bf x})$; (ii) there exists a convex lower semi-continuous function $q:\mathbb{R}^n \rightarrow \mathbb{R} \cup \{ + \infty\}$ s.t. $ A(\varphi^{-1}({\bf x})) \subset \partial q({\bf x}), ~\forall \bf{x} \in {\rm Im}(\varphi)$, and let $C' \subset {\rm Im}(\varphi)$ be polygonally connected. Then, there exists a constant $K \in \mathbb{R}$ such that 
    \[
        q({\bf x}) = b({\bf x}) + \phi({\bf x}) + K, \quad \forall {\bf x} \in C'.
    \]
\end{lemma}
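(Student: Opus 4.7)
The strategy is to extract, from the minimization defining $\varphi$ in hypothesis (i), a common \emph{global} subgradient of $F := b+\phi$ and of $q$ at each point of $\mathrm{Im}(\varphi)$, and then propagate this pointwise agreement to a constant difference over $C'$ by a telescoping argument on line segments.

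First, I would unwind hypothesis (i). Since $a({\bf y})$ is independent of ${\bf x}$, the condition $\varphi({\bf y}) \in \argmin_{{\bf x}} \{D({\bf x},{\bf y}) + \phi({\bf x})\}$ asserts that ${\bf x}_0 := \varphi({\bf y})$ minimizes ${\bf x} \mapsto F({\bf x}) - \langle {\bf x}, A({\bf y})\rangle$ over $\mathbb{R}^n$. Rearranging the optimality inequality yields
\[
F({\bf x}) \geq F({\bf x}_0) + \langle A({\bf y}), {\bf x} - {\bf x}_0\rangle, \quad \forall {\bf x} \in \mathbb{R}^n,
\]
so $A({\bf y})$ is a \emph{global} convex subgradient of $F$ at $\varphi({\bf y})$, irrespective of whether $F$ itself is convex. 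Combined with (ii), for every ${\bf x} \in \mathrm{Im}(\varphi)$ and every selection $v({\bf x}) \in A(\varphi^{-1}({\bf x}))$, the vector $v({\bf x})$ simultaneously satisfies the global subgradient inequality for $F$ and for the convex function $q$ at ${\bf x}$.

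Next, I would handle one line segment of the polygonal path at a time. Fix a segment $[{\bf x}_0, {\bf x}_1] \subset C'$ (a piece of the polygonal path in $C' \subset \mathrm{Im}(\varphi)$), pick the uniform partition ${\bf z}_i := {\bf x}_0 + (i/N)({\bf x}_1-{\bf x}_0)$ for $i = 0,\ldots,N$, and choose $v_i \in A(\varphi^{-1}({\bf z}_i))$ (which exists since ${\bf z}_i \in C' \subset \mathrm{Im}(\varphi)$). Applying the global subgradient inequalities at both ${\bf z}_i$ and ${\bf z}_{i+1}$ gives
\[
\langle v_i,\, {\bf z}_{i+1}-{\bf z}_i\rangle \leq F({\bf z}_{i+1})-F({\bf z}_i) \leq \langle v_{i+1},\, {\bf z}_{i+1}-{\bf z}_i\rangle,
\]
and an identical pair of bounds with $q$ in place of $F$. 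Subtracting and summing over $i$,
\[
\bigl\lvert (F-q)({\bf x}_1) - (F-q)({\bf x}_0) \bigr\rvert \leq \sum_{i=0}^{N-1} \langle v_{i+1}-v_i,\, {\bf z}_{i+1}-{\bf z}_i\rangle = \tfrac{1}{N}\,\langle v_N - v_0,\, {\bf x}_1-{\bf x}_0\rangle,
\]
where the telescoping is exact because the partition lies on a single line. Letting $N \to \infty$ (with $v_0, v_N$ fixed at the endpoints) forces the right-hand side to zero, so $F({\bf x}_1) - q({\bf x}_1) = F({\bf x}_0) - q({\bf x}_0)$. Applying the same argument to each segment of the polygonal path, $F - q$ is constant on $C'$, establishing $q({\bf x}) = b({\bf x}) + \phi({\bf x}) + K$ for a uniform constant $K \in \mathbb{R}$.

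The main obstacle is the measurability/selection of $v({\bf x})$ along an entire segment; the argument above circumvents this by working only with a finite partition, which requires pointwise (not measurable) existence of $v_i$, granted precisely by $C' \subset \mathrm{Im}(\varphi)$. A secondary subtlety one might expect — invoking a subdifferential sum rule to interpret $\partial(b+\phi)$ — is avoided entirely, because the global subgradient of $F$ is produced directly from the optimality condition rather than from a calculus rule on $\partial b + \partial \phi$.
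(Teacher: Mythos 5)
Your argument is correct. Note first that the paper itself supplies no proof of this lemma: it is imported verbatim as Theorem~3 of \cite{gribonval2020characterization}, so there is no in-paper argument to compare against; your proposal is a self-contained derivation of the cited result. The two key observations are both sound: (a) since $a({\bf y})$ does not depend on ${\bf x}$, the argmin condition in (i) is exactly the global subgradient inequality $F({\bf x})\geq F(\varphi({\bf y}))+\langle A({\bf y}),{\bf x}-\varphi({\bf y})\rangle$ for $F:=b+\phi$, valid with no convexity of $F$ and no sum rule; and (b) hypothesis (ii) makes the same vector a subgradient of $q$ at the same point. The sandwich $\langle v_i,\Delta\rangle\leq F({\bf z}_{i+1})-F({\bf z}_i)\leq\langle v_{i+1},\Delta\rangle$ (and its twin for $q$) gives $\lvert (F-q)({\bf z}_{i+1})-(F-q)({\bf z}_i)\rvert\leq\langle v_{i+1}-v_i,\Delta\rangle$ with a nonnegative right-hand side by monotonicity, and because $\Delta=({\bf x}_1-{\bf x}_0)/N$ is the same for every subinterval the sum telescopes exactly to $\tfrac1N\langle v_N-v_0,{\bf x}_1-{\bf x}_0\rangle$, which vanishes as $N\to\infty$ once $v_0,v_N$ are fixed selections at the endpoints. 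Chaining over the segments of the polygonal path then gives the constant $K$. This is essentially the cyclic-monotonicity argument underlying the original reference, but your version is more elementary in that it never invokes a subdifferential calculus rule or any regularity of $b$ and $\phi$ individually. Two minor points worth making explicit if you write this up: you should record that $q$ and $F$ are finite on $\mathrm{Im}(\varphi)$ (for $q$ this follows because $\partial q({\bf x})\supset A(\varphi^{-1}({\bf x}))\neq\emptyset$ forces ${\bf x}\in\mathrm{dom}(q)$; for $F$ one needs the minimum in (i) to be attained at a finite value, which is the implicit properness assumption of the cited theorem), and you should state that the segments of the polygonal path lie entirely in $C'\subset\mathrm{Im}(\varphi)$, which is what guarantees that every partition point ${\bf z}_i$ admits a selection $v_i$.
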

The above lemma demonstrates a valuable property that allows us to establish the weak convexity of \(\phi_\gamma\) for a generic Legendre kernel \(h\) in the following lemma.

\begin{lemma}{\rm{({\bf Validation of weak convexity of $\phi_\gamma$})} }\label{lemma:weakly_convex_phi}
Let $\mathcal{D}_\gamma $ be a GS denoiser defined in \eqref{gaussian_denoiser}. Then $\phi_\gamma$ which is associated with $\mathcal{D}_\gamma$ given in \eqref{add2}  is $\frac{\kappa L_{g_\gamma}}{1+L_{g_\gamma}}$-weakly convex on Im($\mathcal{D}_\gamma$), where $L_{g_\gamma}<1$ is the Lipschitz modulus of $\nabla g_\gamma$ in \eqref{gaussian_denoiser}.
\end{lemma}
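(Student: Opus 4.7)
The plan is to apply Lemma~\ref{thm:convexity_of_deep_prior} (the Gribonval--Nikolova characterization) to identify a convex companion $q$ with $\phi_\gamma = q - h - K$ on $\mathrm{Im}(\mathcal{D}_\gamma)$, quantify the strong convexity of $q$ via Legendre duality, and then extract the weak-convexity modulus of $\phi_\gamma$ from a subgradient comparison with $h$.

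First, I would split the Bregman distance as
\begin{equation*}
D_h({\bf x},{\bf y}) = a({\bf y}) - \langle {\bf x}, A({\bf y})\rangle + b({\bf x}), \quad a({\bf y}) := \langle \nabla h({\bf y}), {\bf y}\rangle - h({\bf y}),\; A := \nabla h,\; b := h,
\end{equation*}
and take $\varphi := \mathcal{D}_\gamma$, $\phi := \phi_\gamma$. Hypothesis~(i) of Lemma~\ref{thm:convexity_of_deep_prior} is then the content of Proposition~\ref{prop:bregman_denoiser_defined}. For hypothesis~(ii), I would choose $q := r^{*}$ with $r := \psi_\gamma \circ \nabla h^{*}$ (convex by the assumption already made in Proposition~\ref{prop:bregman_denoiser_defined}); the chain-rule computation executed inside that same proof yields $\nabla r({\bf z}) = \mathcal{D}_\gamma(\nabla h^{*}({\bf z}))$, and Legendre inversion via Proposition~\ref{prop:Legendre} then gives $\nabla q = \nabla h \circ \mathcal{D}_\gamma^{-1} = A \circ \varphi^{-1}$, as required. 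The lemma therefore produces $K \in \mathbb{R}$ with $q = h + \phi_\gamma + K$ on $\mathrm{Im}(\mathcal{D}_\gamma)$. Smoothness/strong-convexity duality for Legendre conjugates then reduces the strong-convexity modulus of $q = r^{*}$ to the reciprocal of the Lipschitz constant of $\nabla r = \mathcal{D}_\gamma \circ \nabla h^{*}$: since $\nabla h^{*}$ is $1/\kappa$-Lipschitz (from $\kappa$-strong convexity of $h$) and $\mathcal{D}_\gamma = I - \nabla g_\gamma$ is $(1 + L_{g_\gamma})$-Lipschitz, $\nabla r$ is $(1 + L_{g_\gamma})/\kappa$-Lipschitz, so $q$ is $\kappa/(1 + L_{g_\gamma})$-strongly convex.

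To finish, I would use the subgradient identity on $\mathrm{Im}(\mathcal{D}_\gamma)$. The first-order optimality of the Bregman proximal subproblem gives $p_i := \nabla h({\bf y}_i) - \nabla h({\bf x}_i) \in \partial \phi_\gamma({\bf x}_i)$ for ${\bf x}_i = \mathcal{D}_\gamma({\bf y}_i)$; using $\nabla q({\bf x}_i) = \nabla h({\bf y}_i)$ together with the substitution ${\bf x}_i = {\bf y}_i - \nabla g_\gamma({\bf y}_i)$, I would lower-bound $\langle p_1 - p_2, {\bf x}_1 - {\bf x}_2\rangle$ by combining the $\kappa/(1+L_{g_\gamma})$-strong convexity of $q$, the $\kappa$-strong convexity of $h$, and the Lipschitz bound $\|\nabla g_\gamma({\bf y}_1) - \nabla g_\gamma({\bf y}_2)\| \leq L_{g_\gamma}\|{\bf y}_1 - {\bf y}_2\|$. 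A one-dimensional quadratic-form argument---parametrised by $t := \|\nabla g_\gamma({\bf y}_1) - \nabla g_\gamma({\bf y}_2)\|/\|{\bf y}_1 - {\bf y}_2\| \in [0, L_{g_\gamma}]$, with the saturating case $t = L_{g_\gamma}$ pinching the bound---extends the Euclidean computation of Hurault--Leclaire--Papadakis~(2022) to the Bregman setting and yields the modulus $\kappa L_{g_\gamma}/(1 + L_{g_\gamma})$. The main obstacle lies in this last step: the decomposition $\phi_\gamma = q - h - K$ expresses $\phi_\gamma + \tfrac{\kappa L_{g_\gamma}}{2(1+L_{g_\gamma})}\|\cdot\|^2$ as a difference of convex functions, so convexity cannot be read off by inspection. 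Instead, one must exploit the structural pairing ${\bf x}_i = \mathcal{D}_\gamma({\bf y}_i)$ to cancel the $\nabla h$ cross-terms against the strong convexity of $q$, avoiding any appeal to the locally unbounded Lipschitz constant $L_h$ of $\nabla h$.
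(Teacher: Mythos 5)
Your proposal tracks the paper's proof almost exactly through its first three stages: the same splitting $a({\bf y})=\langle\nabla h({\bf y}),{\bf y}\rangle-h({\bf y})$, $A=\nabla h$, $b=h$; the same companion function $q$ with $\nabla q=\nabla h\circ\mathcal{D}_\gamma^{-1}$ (the paper obtains its convexity modulus via the Hessian chain rule $\nabla^2 q({\bf x})=\nabla^2 h(\nabla\theta_\gamma({\bf x})+{\bf x})(\nabla^2\theta_\gamma({\bf x})+I_d)\succeq\frac{\kappa}{1+L_{g_\gamma}}I_d$, whereas you obtain the same constant by Legendre duality applied to $r=\psi_\gamma\circ\nabla h^*$ --- equivalent and both legitimate); and the same appeal to Lemma~\ref{thm:convexity_of_deep_prior} to get $q=h+\phi_\gamma+K$ on ${\rm Im}(\mathcal{D}_\gamma)$. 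Up to that point there is nothing to object to.

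The genuine gap is in the final extraction step, which you yourself flag as ``the main obstacle'' and then do not carry out. You propose to lower-bound $\langle p_1-p_2,{\bf x}_1-{\bf x}_2\rangle$, with $p_i=\nabla q({\bf x}_i)-\nabla h({\bf x}_i)\in\partial\phi_\gamma({\bf x}_i)$, using only the $\tfrac{\kappa}{1+L_{g_\gamma}}$-strong convexity of $q$, the $\kappa$-strong convexity of $h$, and the $L_{g_\gamma}$-Lipschitz bound on $\nabla g_\gamma$. But the cross term $-\langle\nabla h({\bf x}_1)-\nabla h({\bf x}_2),{\bf x}_1-{\bf x}_2\rangle$ requires an \emph{upper} bound on the monotonicity of $\nabla h$ (an upper curvature bound), and strong convexity supplies only a lower one; these ingredients alone cannot close the estimate, and the promised ``one-dimensional quadratic-form argument'' exploiting the pairing ${\bf x}_i=\mathcal{D}_\gamma({\bf y}_i)$ is asserted rather than exhibited. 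The paper instead finishes at the Hessian level: from $q=h+\phi_\gamma+K$ it writes $\nabla^2 h+\nabla^2\phi_\gamma=\nabla^2 q\succeq\frac{\kappa}{1+L_{g_\gamma}}I_d$ and combines this with $\nabla^2 h\succeq\kappa I_d$ to conclude $\nabla^2\phi_\gamma\succeq-\frac{\kappa L_{g_\gamma}}{1+L_{g_\gamma}}I_d$. (One may note that this cancellation also implicitly needs control of the excess curvature $\nabla^2 h-\kappa I_d$, which is exact only in the Euclidean case; but that is precisely the step your proposal replaces with a sketch, so as written the proposal does not establish the stated modulus.)
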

\begin{proof}
    From Proposition~\ref{prop:bregman_denoiser_defined}, we noted that $\mathcal{D}_\gamma : {\rm int} {\rm dom}(h) \rightarrow \mathbb{R}$ and for each ${\bf y} \in {\rm int}{\rm dom}(h)$ $\mathcal{D}_\gamma ({\bf y}) \in \argmin_{{\bf x} \in \mathbb{R}^n}\{ D_h({\bf x}, {\bf y}) +\phi_\gamma ({\bf x})\}$. Setting
   $a({\bf y}) := \left \langle \nabla h({\bf y}), {\bf y}\right \rangle -h({\bf y}), $ 
    $A({\bf y}) := \nabla h({\bf y})$, and
\begin{align*} 
    b({\bf x}) &:= \begin{cases}
        h({\bf x}), \qquad\text{ if } {\bf x} \in {\rm dom}(h),\\
        +\infty, \qquad\text{ otherwise}.
    \end{cases}
\end{align*}
Let $q $ be a function such that $\nabla q({\bf x}) =\nabla h (\mathcal{D}_\gamma^{-1}({\bf x}))$.
 Since $\nabla h (\mathcal{D}_\gamma^{-1}({\bf x})) = \nabla h( \nabla \theta_\gamma({\bf x}) + {\bf x})$, where $\theta_\gamma$ is defined in \eqref{theta_gamma}, $\mathcal{D}_\gamma = {\rm prox}_{\theta_\gamma}({\bf x})$ and $\theta_\gamma$ is $\frac{L_{g_\gamma}}{1+L_{g_\gamma}}$-weakly convex. 
 This leads us to $\nabla^2 q({\bf x}) = \nabla^2 h( \nabla \theta_\gamma({\bf x}) + {\bf x}) (\nabla^2 \theta_\gamma({\bf x}) + I_d) \succeq \kappa \left (1 - \frac{L_{g_\gamma}}{1+L_{g_\gamma}} \right) I_d \succeq 0$. This verifies that $g$ is convex and lower semi-continuous.
Besides, as mentioned in \cite{hurault2022proximal}, ${\rm Im}(\mathcal{D}_\gamma)$ is polygonally connected. Applying Lemma~\ref{thm:convexity_of_deep_prior}, we obtain
\[
    q({\bf x}) = h({\bf x}) + \phi_\gamma({\bf x}) + K, \quad \forall {\bf x} \in {\rm Im}(\mathcal{D}_\gamma),
\]
where $\phi_\gamma$ is defined in \eqref{add2}.
Further, we have 
\begin{align*}
    \nabla^2 q({\bf x}) = \nabla^2 h({\bf x}) + \nabla^2 \phi_\gamma({\bf x}) \succeq \kappa \left (1 - \frac{L_{g_\gamma}}{1+L_{g_\gamma}} \right) I_d.
\end{align*}
Since $h$ is $\kappa$-strongly convex from Assumption~\ref{asm:assumption_on_h}, this leads to $\nabla^2 h ({\bf x}) - \kappa I_d \succeq 0$ and hence
$[\nabla^2 h({\bf x})  - \kappa I_d ] + [\nabla^2 \phi_\gamma({\bf x}) +  \frac{\kappa L_{g_\gamma}}{1 + L_{g_\gamma}}  I_d] \succeq 0.$
This implies that $ \nabla^2 \phi_\gamma({\bf x}) + \frac{\kappa L_{g_\gamma}}{1+L_{g_\gamma}}  I_d \succeq 0$ and thus $\phi_\gamma$ is $ \frac{\kappa L_{g_\gamma}}{1+L_{g_\gamma}}$-weakly convex on ${\rm Im} (\mathcal{D}_\gamma)$. This completes the proof.
\end{proof}

In the following, we present the convergence results of PnP-iBPDCA (Algorithm~\ref{alg:PnP-iBPDCA}).
\begin{theorem} {\rm{({\bf Convergence of PnP-iBPDCA})}}
    Suppose the kernel function $h$ satisfies Assumptions~\ref{asm:assumption_on_h} and~\ref{asm:essentially_smooth} with $g:=\frac{1}{\lambda}\phi_\gamma$. Let $g_\gamma:\mathbb{R}^n \rightarrow \mathbb{R} \cup \{+ \infty \}$ be proper and differentiable and $\mathcal{D}_\gamma({\bf y}): \operatorname{int} \operatorname{dom}(h)\rightarrow \mathbb{R}^n$ be defined as $\mathcal{D}_\gamma := {\bf y} - \nabla g_\gamma({\bf y})$. Assume $\operatorname{Im}(\mathcal{D}_\gamma )\subset \operatorname{int}\operatorname{dom}(h)$, the sequence $\left\{{\bf x}^k\right\}_{k=0}^\infty$ is generated by Algorithm~\ref{alg:PnP-iBPDCA}. Then the following statements hold:
    \begin{itemize}
    \item[(i)] $\{H_\delta({\bf x}^k,{\bf x}^{k-1})\}_{k=0}^\infty$ is non-increasing and convergent.
    \item[(ii)]  any cluster point ${\bf x}^*$ of sequence $\{{\bf x}^k\}_{k\geq 1}$ is a critical point of~\eqref{eq:DCA_with_deep_prior}, \ie\  it holds that $0\in \partial \Psi_\lambda({\bf x}^*)$.
    \item[(iii)] if $f_1$ and $f_2$ in Problem~\eqref{eq:DCA_with_deep_prior} are both KL functions, then the whole sequence $\{{\bf x}^k\}_{k\geq 1}$ generated by PnP-iBPDCA is convergent. 
    \end{itemize}
\end{theorem}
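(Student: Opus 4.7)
The plan is to recognize PnP-iBPDCA as the specialization of iBPDCA (Algorithm~\ref{alg:iBPDCA}) to Problem~\eqref{eq:DCA_with_deep_prior} with $g := \frac{1}{\lambda}\phi_\gamma$, and then to invoke the convergence machinery already developed in Section~\ref{sec:convergence_iBPDCA}. The key identification comes from Proposition~\ref{prop:bregman_denoiser_defined}: under the standing hypothesis $\operatorname{Im}(\mathcal{D}_\gamma) \subset \operatorname{intdom}(h)$, the GS denoiser satisfies $\mathcal{D}_\gamma \in \operatorname{prox}_{\phi_\gamma}^h$, so the PnP update coincides with the Bregman proximal map appearing in \eqref{eq:subproblem_breg_prox_oper} with $\lambda g = \phi_\gamma$. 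Hence the iterates produced by Algorithm~\ref{alg:PnP-iBPDCA} are indistinguishable from those of iBPDCA applied to~\eqref{eq:DCA_with_deep_prior}.

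Next I would check that all assumptions required by the iBPDCA analysis transfer to this setting. Parts (i), (ii), (iv), (v) of Assumption~\ref{asm:assumption_on_h} depend only on $h$, $f_1$ and $\Psi_\lambda$ and are inherited from the setup. The weak convexity requirement in Assumption~\ref{asm:assumption_on_h}(iii) is supplied by Lemma~\ref{lemma:weakly_convex_phi}: $\phi_\gamma$ is $\frac{\kappa L_{g_\gamma}}{1+L_{g_\gamma}}$-weakly convex on $\operatorname{Im}(\mathcal{D}_\gamma)$, so $g = \frac{1}{\lambda}\phi_\gamma$ is $\eta$-weakly convex with $\eta = \frac{\kappa L_{g_\gamma}}{\lambda(1+L_{g_\gamma})}$, and the stepsize rule $\frac{1}{\lambda} > \delta + \frac{\eta}{\kappa}$ becomes $1 - \lambda \delta > \frac{L_{g_\gamma}}{1+L_{g_\gamma}}$, which is feasible because $L_{g_\gamma} < 1$. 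Assumption~\ref{asm:proximal_mapping_subset_C} is immediate from $\operatorname{Im}(\mathcal{D}_\gamma) \subset \operatorname{intdom}(h)$. With these in place, part (i) follows from Lemma~\ref{lem:descending_property_auxiliary_function}: the inequality~\eqref{inequality_for_auxiliary_function} gives monotonic decrease of $H_\delta$, and combined with the lower boundedness of $\Psi_\lambda$ and non-negativity of $D_h$ yields convergence. Part (ii) is then a direct application of Theorem~\ref{thm:subsquential_convergence}(iv).

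For part (iii), the remaining ingredient is the KL property of the Lyapunov function $H_\delta({\bf x},{\bf y}) = f_1({\bf x}) - f_2({\bf x}) + \frac{1}{\lambda}\phi_\gamma({\bf x}) + \delta D_h({\bf y},{\bf x})$, after which Theorem~\ref{thm:global_convergence} applies. By Lemma~\ref{prop:validation_real_analytic} the learned prior $\phi_\gamma$ is subanalytic; the kernels used in the applications (the quadratic and quartic kernels of Section~\ref{sec:experiments}) are real analytic, so $D_h$ is subanalytic in both arguments. Adding the KL assumption on $f_1$ and $f_2$, $H_\delta$ is a sum of subanalytic functions. I would then invoke the stability of the KL property under finite sums of subanalytic functions whose $\operatorname{dom}$ intersections are well behaved (as in Bolte–Daniilidis–Lewis), so that $H_\delta$ is KL, and Theorem~\ref{thm:global_convergence}(iii) finishes the proof. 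I also need to verify Assumption~\ref{asm:f2_local_continuity}, which in practice is inherited from the KL/analyticity hypothesis on $f_2$ together with the smoothness of $h$.

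The hardest step will be the KL verification for $H_\delta$. The KL property is not preserved under arbitrary sums, and $\phi_\gamma$ is defined only implicitly through $\mathcal{D}_\gamma$, with its domain equal to $\operatorname{Im}(\mathcal{D}_\gamma)$. Thus care is required to argue that $\phi_\gamma$, $f_1$, $f_2$, and $D_h(\cdot,\cdot)$ all live in a common tame structure (e.g., real analytic or o-minimal) so that the sum inherits the KL property uniformly near the accumulation points of the iterates. Once this tameness is identified, the decrease property already proven in (i) plus the relative-error condition supplied by Theorem~\ref{thm:global_convergence}(i) closes the argument via the standard KL machinery.
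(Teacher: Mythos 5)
Your proposal follows essentially the same route as the paper's proof: identify the PnP update with the Bregman proximal map via Proposition~\ref{prop:bregman_denoiser_defined}, use Lemma~\ref{lemma:weakly_convex_phi} to place $g=\frac{1}{\lambda}\phi_\gamma$ within the iBPDCA framework, and then invoke Lemma~\ref{lem:descending_property_auxiliary_function}, Theorem~\ref{thm:subsquential_convergence}, Lemma~\ref{prop:validation_real_analytic}, and Theorem~\ref{thm:global_convergence} for parts (i)--(iii). If anything, you are more careful than the paper on the final step, since the paper simply asserts that $\Psi_\lambda$ (and hence $H_\delta$) is KL from subanalyticity of $\phi_\gamma$ plus the KL hypothesis on $f_1,f_2$, whereas you correctly flag that closure of the KL class under sums requires the summands to live in a common tame structure.
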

    \begin{proof}
        It follows from Lemma~\ref{lemma:weakly_convex_phi} that $\phi_\gamma$ is weakly-convex. Thus, Problem~\eqref{eq:DCA_with_deep_prior} can be seen as a special form of Problem~\eqref{eq:DCA_model} with $g= \frac{1}{\lambda}\phi_\gamma$. It follows from Lemma~\ref{lem:descending_property_auxiliary_function} that (i) holds. With Theorem~\ref{thm:subsquential_convergence}, (ii) holds as well. From Lemma~\ref{prop:validation_real_analytic} and $f_1$ and $f_2$ are both KL functions, we know that $\Psi_\lambda$ is a KL function. Finally, the conclusion (iii) can be derived from Theorem~\ref{thm:global_convergence}. This completes the proof.
    \end{proof}

\section{Numerical Experiments} 
\label{sec:experiments}
In this section, we discuss the effectiveness and robustness of our proposed schemes, iBPDCA (Algorithm~\ref{alg:iBPDCA}) and PnP-iBPDCA (Algorithm~\ref{alg:PnP-iBPDCA}). 
As mentioned in the previous section, iBPDCA accommodates generic weakly convex priors, while PnP-iBPDCA is a special instance of iBPDCA. Instead of handcrafted priors, PnP-iBPDCA features a data-driven deep prior that aligns with the theoretical analysis from Section~\ref{sec:convergence_iBPDCA} and provides theoretical convergence. 
It is well known that learned priors typically yield better results than handcrafted ones \citep{chen2022learning,wu2024extrapolated}. Therefore, we solely conduct experiments with PnP-iBPDCA. 
To validate the effectiveness of the proposed scheme, we undertake two imaging experiments: Rician noise removal on magnetic resonance images and phase retrieval. We compare our results with state-of-the-art PnP-based and non-PnP-based methods. Notably, Rician noise, which invariably affects MR images during the collection of measurements in k-space, can hinder tissue identification and clinical diagnosis. Conversely, phase retrieval focuses on recovering the signal phase from the measured amplitude.
All the experiments are conducted on an Nvidia Quadro RTX8000 GPU. The source code is available at \url{https://github.com/nicholechow/PnP-iBPDCA}.

\subsection{Application to Rician Noise Removal}
In this subsection, we conduct the Rician noise removal on MR images by the proposed framework. First, we introduce the model and experimental setting of our PnP-iBPDCA algorithm for solving the Rician noise removal problem. Then, we specify the model parameters and analyze the influence of the inertial parameter. Multiple efforts have been made to address Rician noise removal Problem~\eqref{eq:map_Rician_model} through DCA with variational prior such as \cite{getreuer2011variational, chen2015convex, kang2015nonconvex,chen2019variational,wu2022efficient}. Recently, \cite{wei2023nonconvex} tackled the problem with PnP-ADMM directly without addressing the fundamental DC structure of the problem. Naturally, we are interested in how our proposed method compares. As stated before, we split the objective function as Equation~\eqref{eq:Rician_obj_split}. More specifically, from Theorem 2.1 in \cite{baricz2007inequalities}, $I_0$ in Equation~\eqref{eq:Rician_obj_split} 
%
is strictly log convex. Hence $\log(I_0)$ is a proper and convex function, satisfying Assumption~\ref{asm:assumption_on_h}(iii). Then, take $h=\frac{1}{2}\Vert \cdot \Vert^2$, which is of Legendre type, and apply the proposed iBPDCA (Algorithm \ref{alg:iBPDCA}), we obtain
\begin{equation}\label{ibpdca_Rician}
\begin{aligned}
    {\bf x}^{k+1} &= \argmin_{x}{\mu \phi({\bf x})+\left\langle \frac{1}{\sigma^2}  \mathcal{A}^{\top} \mathcal{A} {\bf y}^k - \xi^k, {\bf x}-{\bf y}^k\right\rangle + \frac{1}{2 \lambda} \left\|{\bf x}-{\bf y}^k\right\|^2 } \\
    &= \argmin_{x} \mu \phi({\bf x}) + \frac{1}{2\lambda}\left\Vert {\bf x} -\left({\bf y}^k - \frac{\lambda}{\sigma^2}  \mathcal{A}^{\top} \mathcal{A} {\bf y}^k + \lambda \xi^k \right) \right\Vert^2  \\
    &= {\rm prox}_{\lambda \mu \phi}  \left({\bf y}^k - \frac{\lambda}{\sigma^2}  \mathcal{A}^{\top} \mathcal{A} {\bf y}^k + \lambda \xi^k \right),
\end{aligned}
\end{equation}
where
\begin{equation} \nonumber
        \xi^k =\mathcal{A}^{\top} \frac{f}{\sigma^2}\frac{I_1\left(\frac{f\mathcal{A}{\bf x}^k}{\sigma^2} \right)}{I_0\left(\frac{f\mathcal{A}{\bf x}^k}{\sigma^2}\right)},
\end{equation}
with $I_1$ being the modified Bessel function of the first kind with order one \citep{gray1895treatise}.

Now we specify the application of PnP-iBPDCA (Algorithm~\ref{alg:PnP-iBPDCA}) to solve the Rician noise removal problem. 
As stated in Proposition~\ref{prop:bregman_denoiser_defined}, we have to ensure $\psi_\gamma \circ \nabla h^*$ is convex on $ {\rm int}{\rm dom}\left(h^*\right)$ with $\nabla \psi_\gamma ({\bf y}) = {\bf y} -\nabla g_\gamma({\bf y})$.
By the setting of $h =\frac{1}{2} \Vert \cdot \Vert^2$, we recover $\psi_\gamma({\bf y})=\frac{1}{2} \Vert {\bf y} \Vert^2 -g_\gamma({\bf y})$. Then, it follows from Proposition 3.1(a) in \cite{hurault2022proximal} that $\psi_\gamma \circ \nabla h^*({\bf x}) =\frac{1}{2} \Vert x \Vert^2 -g_\gamma ({\bf x})$ is convex, satisfying assumption in Proposition~\ref{prop:bregman_denoiser_defined}. Hence, there exists a functional $\phi_\gamma$ such that $\mathcal{D}_\gamma={\rm prox}_{\phi_\gamma}^h$ with $h =\frac{1}{2} \Vert \cdot \Vert^2$, where $\mathcal{D}_\gamma$ is the GS denoiser in \eqref{gaussian_denoiser} and $\gamma$ is the noise level. Consequently, the PnP-iBPDCA is applicable for the Rician noise removal task and its iterative scheme from \eqref{ibpdca_Rician} can be read as 
\begin{equation} \label{eq:PnP-iBPDCA_Rician_denoiser_model}
    {\bf x}^{k+1}  = \mathcal{D}_\gamma \left ( {\bf y}^k -\frac{\lambda}{\sigma^2} \mathcal{A}^{\top} \mathcal{A} {\bf y}^k + \lambda \xi^k\right).
\end{equation}

Experiments are conducted on the simulated brain database BrainWeb\footnote{\url{https://brainweb.bic.mni.mcgill.ca/}}, which consists of three MR sequences: T1-weighted, T2-weighted, and proton-density-weighted (PD-weighted). 
For a thorough comparison, we incorporate all three views of MR images, \ie\ axial, sagittal, and coronal along with the three MR sequences with and without multiple sclerosis lesions. The imaging parameters were set as: $\mbox{RF}=0$, slice $\mbox{thickness}=1 mm$, phantom = normal. Since the BrainWeb dataset is under ongoing development and generates volumes with spontaneous variations, we store the generated slices for our experiment at \url{https://github.com/nicholechow/PnP-iBPDCA/tree/main/testsets/brainweb}. 
In our experiment, we focus on the scenario $\mathcal{A}=I$ in \eqref{eq:map_Rician_model} where images are corrupted with Rician noise, to showcase the effectiveness of the proposed technique. It should be noted that our algorithm can handle a vast variety of forward operators $\mathcal{A}$.

\subsubsection{Analysis of Parameters}
Before showcasing the results of the Rician noise removal, we first analyze the setting of algorithm parameters $L$, $\eta$, $\kappa$, and model parameters $\lambda$, $\gamma$.

As mentioned in Sections \ref{sec:alg} and \ref{sec:pnp_alg}, to guarantee the convergence of PnP-iBPDCA (Algorithm~\ref{alg:PnP-iBPDCA}), $(f_1,h)$ should be $L$-smad. Since $f_1({\bf x}):=\frac{1}{2\sigma^2}\Vert \mathcal{A}{\bf x} \Vert^2$, as stated in \eqref{eq:Rician_obj_split}, and $h=\frac{1}{2}\Vert \cdot \Vert^2$, we solely need to ensure the convexity of $Lh-f_1$ (\ie\  the NoLips condition) which is finding a $L >0$ such that 
\[
L \lambda_{\min} (\nabla^2 h({\bf x})) \geq  \lambda_{\max} (\nabla^2 f_1({\bf x})).
\]
Since $\lambda_{\min} (\nabla^2 h({\bf x})) = \lambda_{\min} (I_d) = 1 $ and $\lambda_{\max} (\nabla^2 f_1({\bf x})) = \lambda_{\max} \left(\frac{1}{\sigma^2}\mathcal{A}^{\top} \mathcal{A} \right) = \lambda_{\max} \left(\frac{1}{\sigma^2}I_d \right)=\frac{1}{\sigma^2}$ by the setting, we can then choose $L$ satisfying $L\geq \frac{1}{\sigma^2}$.
On the other hand, 
since $g:=\frac{1}{\lambda}\phi_\gamma$ is $\frac{L_{g_\gamma}}{\lambda(L_{g_\gamma}+1)}$-weakly convex, where $L_{g_\gamma}<1$ is the Lipschitz modulus of $\nabla g_\gamma$ in \eqref{gaussian_denoiser}, we can set $\eta=\frac{1}{2\lambda}$. 
Besides, since $h$ is 1-strongly convex, we can set $\kappa =1$.

Now, we can obtain a bound on $\lambda$ as
$
    \frac{1}{\lambda} >  \max \left\{\delta+ \frac{1}{2\lambda} , \frac{1}{\sigma^2} \right\}.
$
Then, we can choose $\lambda$ such that 
$\lambda < \min\left\{\frac{1}{2\delta},\sigma^2\right\}$. From Equations \eqref{ibpdca_Rician} and \eqref{eq:PnP-iBPDCA_Rician_denoiser_model} we see that denoiser strength $\gamma=\sqrt{\lambda\mu}$ is correlated to $\lambda$ and $\mu$ while $\lambda$ is correlated to noise level $\sigma$, we then can estimate $\gamma =\sqrt{\lambda\mu}$ and $\lambda = \min\left\{\frac{1}{2\delta},\sigma^2\right\} \lambda_c$, where $\lambda_c <1$.  
Following similar parameters selection procedure from \cite{wu2022efficient}, we choose $\lambda_c = \{0.0385, 0.102, 0.1462, 0.7312 \}$, and $\mu = \{1.9,1.6,1.3,1.3\}$ for Rician noise levels $\sigma = \{2.55,7.65,12.75,25.5\}$ respectively.

\subsubsection{Influence of Inertial}
After fixing the algorithm and model parameters in our proposed PnP-iBPDCA, the only remaining variable is the inertial parameter $\beta_k$. We then analyze the choice and influence of the $\beta_k$ in this subsection.

First of all, we analyze the selection of inertial parameter $\beta_k$. Considering our use of Euclidean geometry $\frac{1}{2} \Vert \cdot \Vert^2$, we can simplify the constraint on $\beta_k$ from \eqref{eq:bregman_distance_condition_on_extrapolation_Rician} to $0\leq\beta_k \leq \sqrt{\lambda (\delta-\epsilon)}< 1$, as mentioned in Remark~\ref{remark:range_of_beta_k}. 
By defining
\begin{equation}\label{pi(sigma)}
    \Pi(\lambda) := \sqrt{\lambda (\delta-\epsilon)},
\end{equation}
for each $\sigma = \{2.55, 7.65, 12.75, 25.5\}$, we have $\Pi(\lambda) = \{0.0949, 0.2381, 0.3055, 0.7071\}$.

\begin{figure}[t!] 
    \centering
    \begin{minipage}{0.25\linewidth}
        \centering
        \centerline{\includegraphics[width=2.3in]{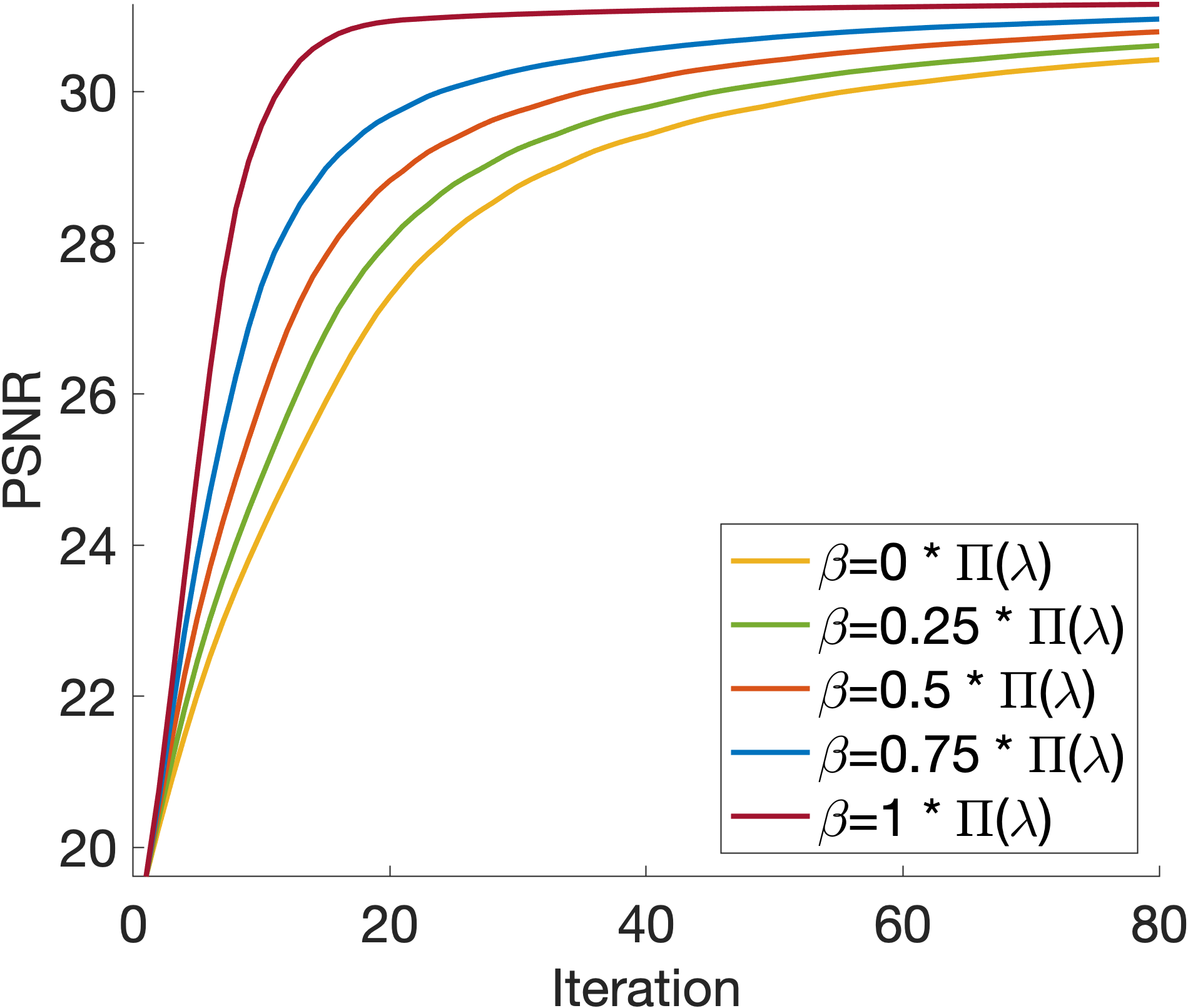}}
        \centerline{ {(a) PSNR }}
    \end{minipage} \hspace{1.3in}
    \begin{minipage}{0.25\linewidth}
        \centering
        \centerline{\includegraphics[width=2.3in]{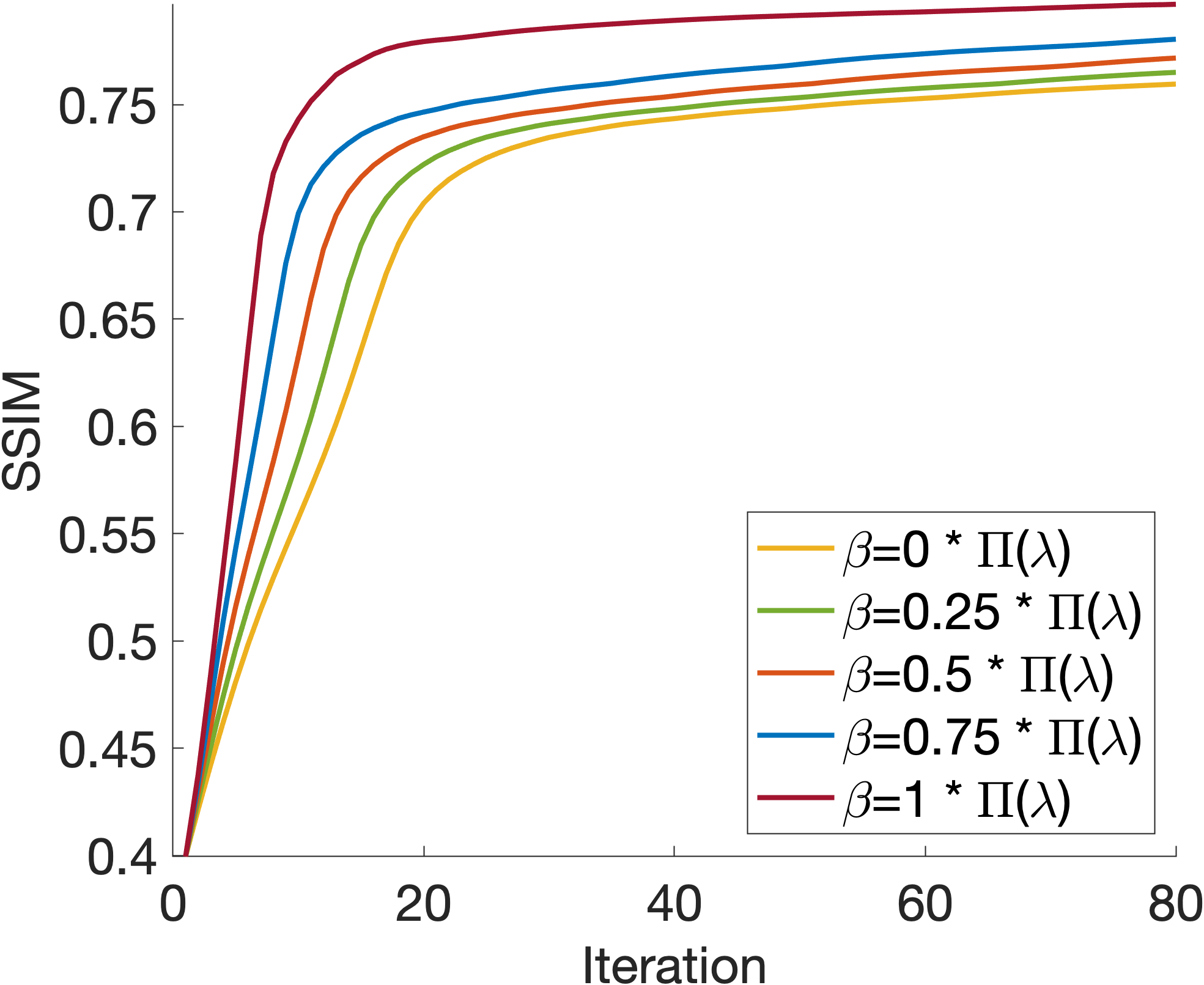}}
        \centerline{{(b) SSIM }}
    \end{minipage}\vspace{0.2in}

    \begin{minipage}{0.25\linewidth}
        \centering
        \centerline{\includegraphics[width=2.3in]{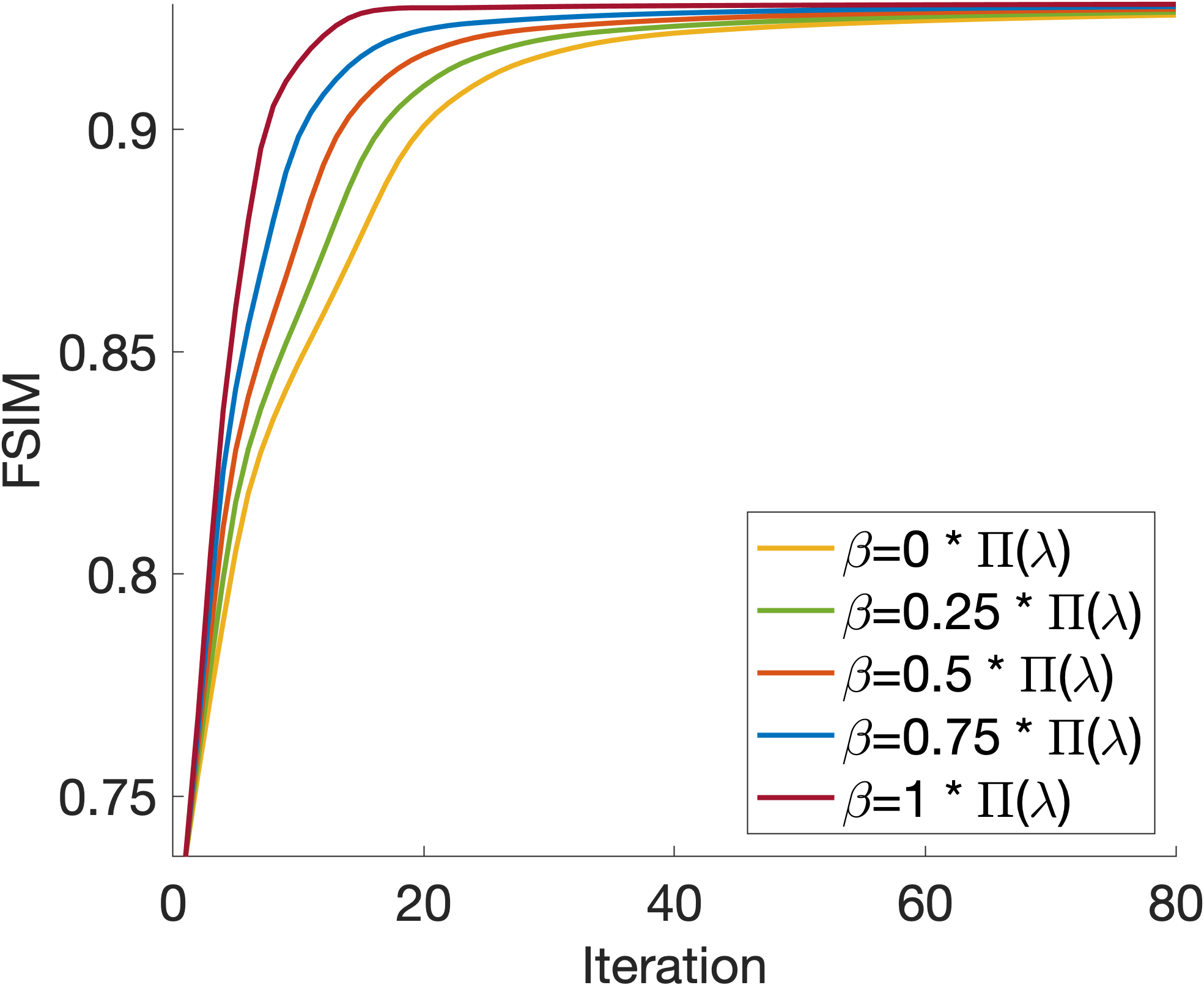}}
        \centerline{{(c) FSIM }}
    \end{minipage}
    \hspace{1.3in}
    \begin{minipage}{0.25\linewidth}
        \centerline{\includegraphics[width=2.3in]{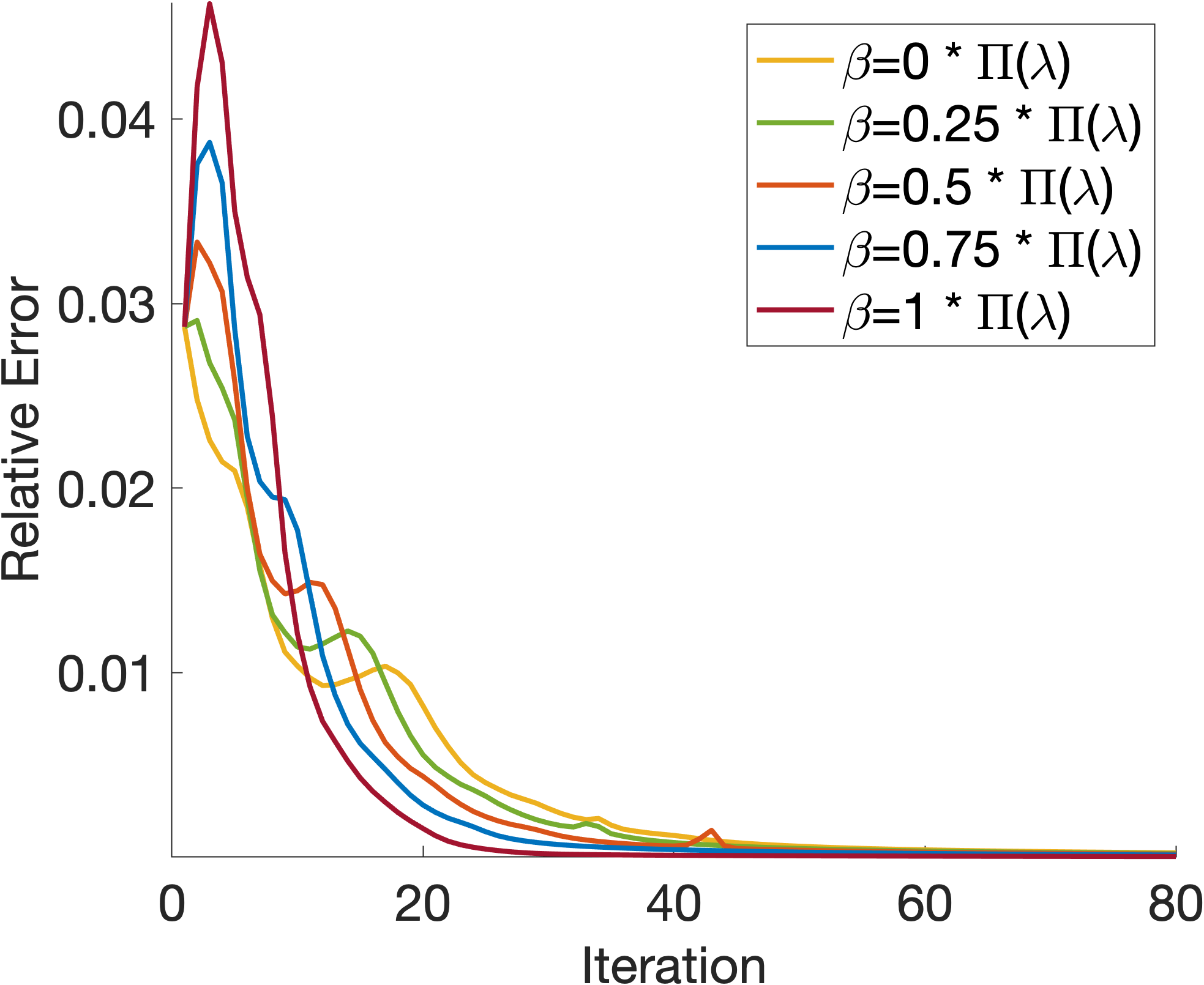}}
        \centerline{{(d) Relative error $ \frac{\Vert {\bf x}_{k+1} - {\bf x}_{k} \Vert }{{\Vert {\bf x}_k\Vert }}$}}
    \end{minipage}
    \caption{Effect of inertial parameter $\beta_k$ with noise level $\sigma = 25.5$. PSNR, SSIM, FSIM, and relative error with iteration are displayed, respectively, to show the influence of the inertial step. Overall, the results of the maximal inertial parameter converge the fastest.   
    }\label{fig:effect_of_inertial}
\end{figure}
\begin{figure}[!t] 
    \centering
    \begin{minipage}{0.3\linewidth}
        \centering
        \includegraphics[width=\linewidth]{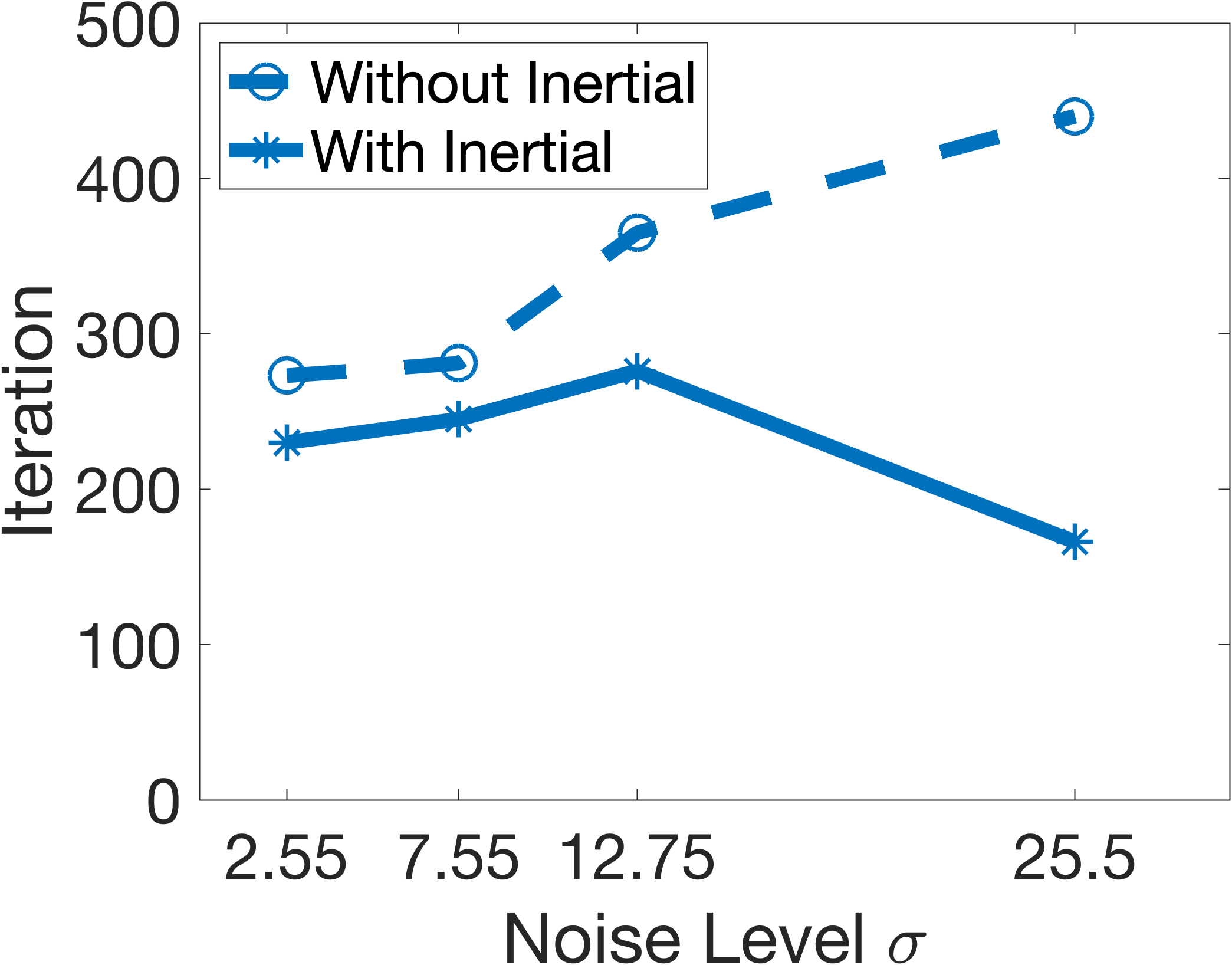}
        \centerline{(a) T1w sequence}
    \end{minipage}
    \begin{minipage}{0.3\linewidth}
        \centering
        \includegraphics[width=\linewidth]{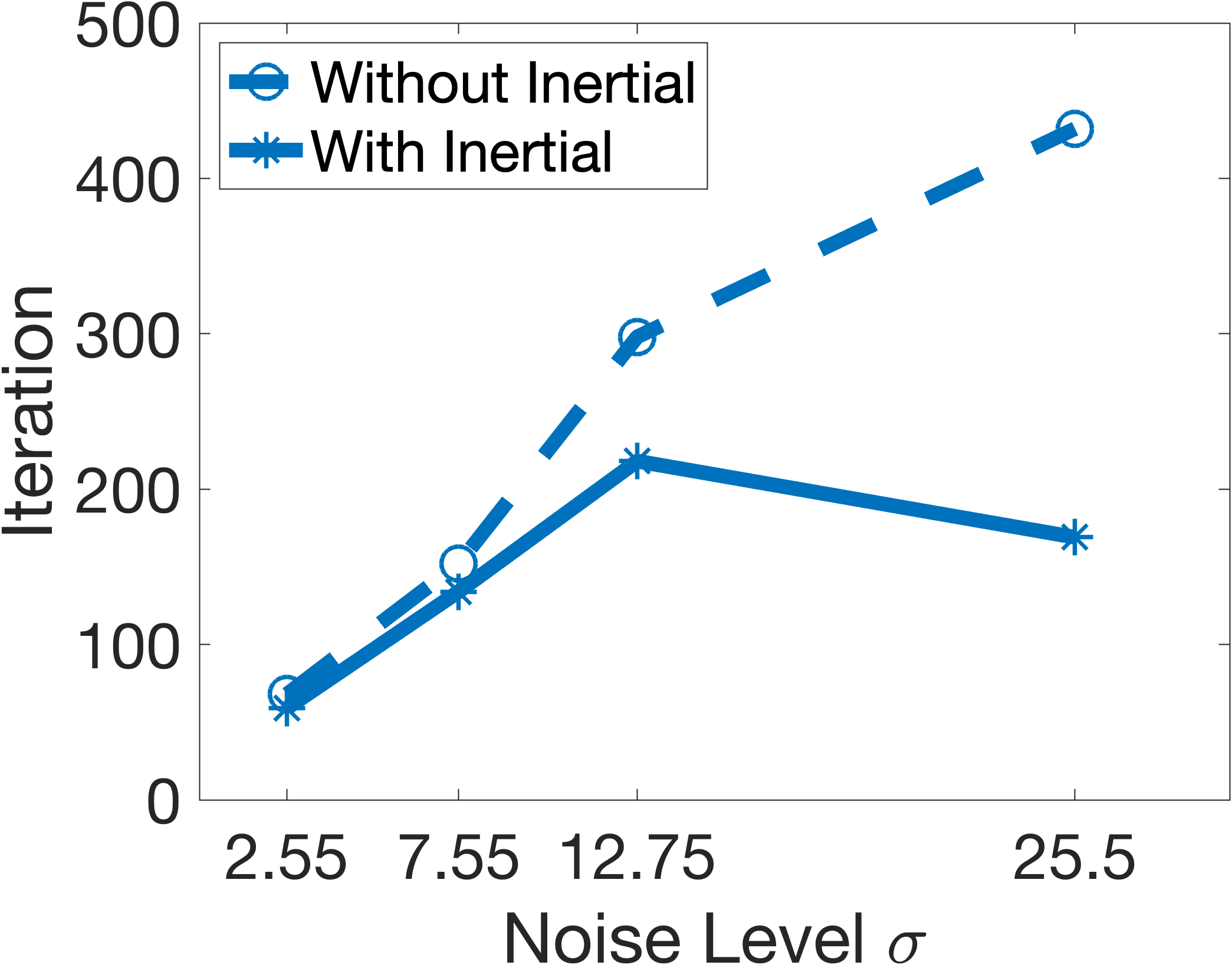}
        \centerline{{(b) T2w sequence}}
    \end{minipage}    
    \begin{minipage}{0.3\linewidth}
        \centering
        \includegraphics[width=\linewidth]{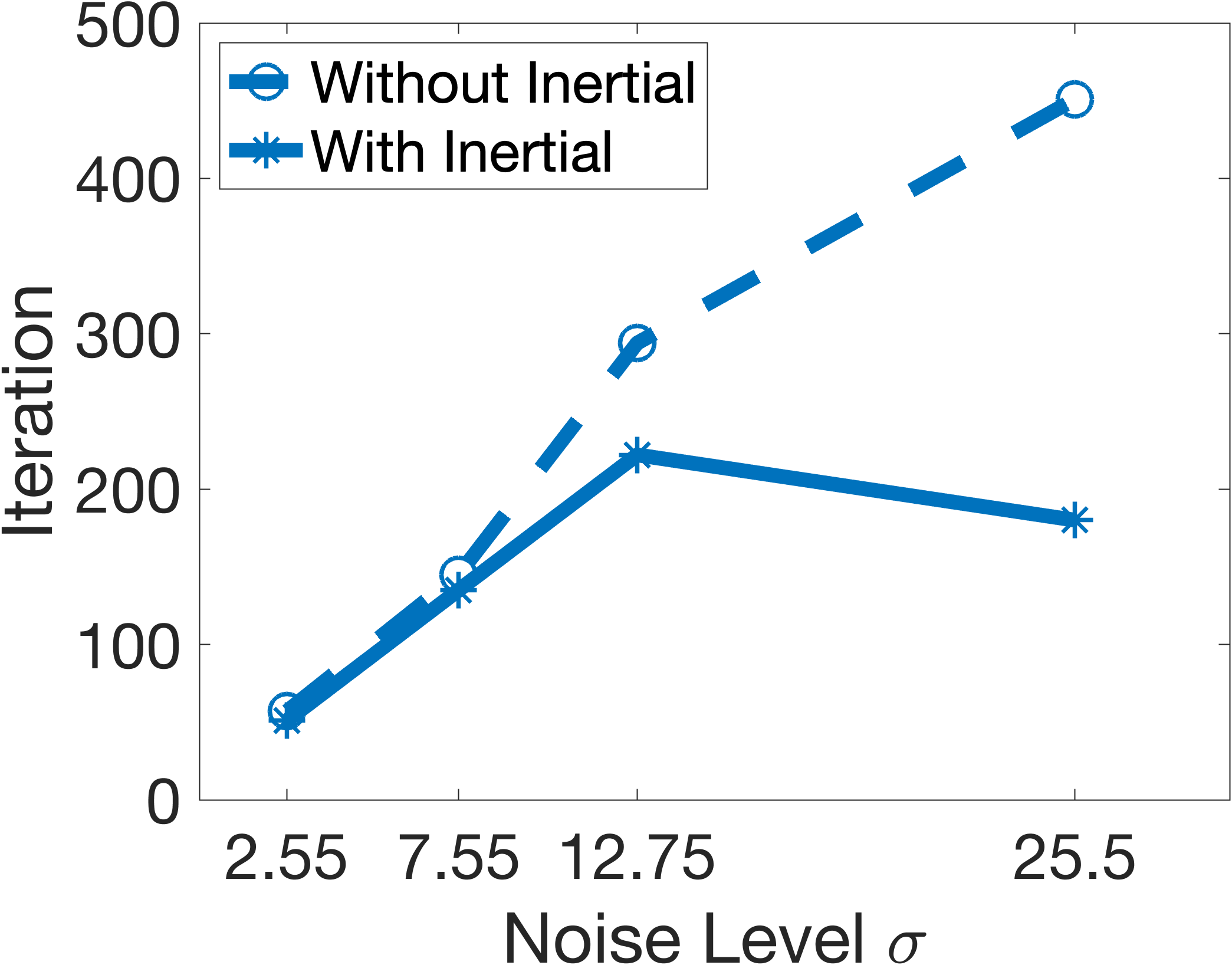}
        \centerline{{(c) PDw sequence}}
    \end{minipage} 
    \caption{
    Average number of convergence iterations on the BrainWeb dataset with and without inertial step. Brain images of three MR sequences T1w, T2w, and PDw with different Rician noise levels $\sigma=\{2.55, 7.55, 12.75, 25.5\}$ are tested. The average convergent iteration with different noise levels is displayed. Overall, the results of with inertial substantially saved the iteration step. 
    }\label{fig:avg_num_iter_brainweb}
\end{figure}

Following the setting of \eqref{pi(sigma)}, we set $\beta_k =\{0, 0.25, 0.50, 0.75, 1\} *\Pi(\lambda)$ to see the influence of inertial on Rician noise removal in Figure~\ref{fig:effect_of_inertial}. The result of PSNR, SSIM, FSIM, and relative error reveals that the acceleration of inertial is almost linear. A larger choice of $\beta_k$ accelerates the convergence rate and reduces computational costs while maintaining the visual quality. Therefore, we choose the maximal $\beta_k$ (\ie\  $\beta_k =\Pi(\lambda)$) in subsequent experiments.

\begin{table}[t!] 

\centering
\resizebox{1.0\hsize}{!}{
    \begin{tabular}{c|c|cccc|cccc|cccc}
    \hline
    \rowcolor[HTML]{EFEFEF} 
        \cellcolor[HTML]{EFEFEF}& Sequences & \multicolumn{4}{c|}{T1w}& \multicolumn{4}{c|}{T2w} & \multicolumn{4}{c}{PDw} \\ 
        \cline{2-14}
        \rowcolor[HTML]{EFEFEF} 
         \multirow{-2}{*}{\cellcolor[HTML]{EFEFEF}\textsc{Methods}}& Noise Level & 2.55 & 7.65 & 12.75 & 25.5 & 2.55 & 7.65 & 12.75 & 25.5 & 2.55 & 7.65 & 12.75 & 25.5  \\ \hline
        \multirow{3}*{Degraded}  & PSNR & 39.20 & 29.60 & 25.13 & 19.09 & 39.59 & 29.89 & 25.38 & 19.33 & 39.59 & 29.91 & 25.44 & 19.57  \\ 
        ~ & SSIM & 88.72 & 72.05 & 59.92 & 38.74 & 93.11 & 78.35 & 68.06 & 51.08 & 92.26 & 73.13 & 59.12 & 39.49  \\ 
        ~ & FSIM & 98.86 & 92.79 & 86.01 & 72.71 & 99.02 & 93.95 & 88.35 & 77.38 & 98.66 & 91.73 & 84.28 & 70.78  \\ \hline
        
        \multirowcell{3}{Prox-GS }  & PSNR & 35.90 & 30.77 & 27.50 & 22.18 & 34.04 & 29.83 & 27.08 & 22.36 & 35.13 & 30.56 & 27.67 & 22.90  \\ 
        ~ & SSIM & 87.71 & 76.76 & 71.81 & 62.67 & 92.05 & 82.63 & 78.08 & 70.86 & 90.68 & 79.48 & 73.59 & 65.20  \\ 
        ~ & FSIM & 96.51 & 93.10 & 90.35 & 84.64 & 96.99 & 94.28 & 92.15 & 88.19 & 95.88 & 91.53 & 88.29 & 83.61  \\ \hline
        
        \multirowcell{3}{WDNN }  & PSNR & 34.66 & 32.63 & 32.56 & 29.40 & 31.52 & 29.64 & 29.29 & 27.03 & 32.40 & 31.00 & 30.51 & 28.13  \\ 
        ~ & SSIM & 88.40 & 84.03 & 82.94 & 75.83 & 93.04 & 88.47 & 87.70 & 81.77 & 92.21 & 87.61 & 85.93 & 78.06  \\ 
        ~ & FSIM & 98.87 & 96.72 & 95.61 & 91.49 & 99.05 & 96.87 & 95.92 & 92.46 & 98.71 & 96.35 & 94.94 & 90.19  \\ \hline

        \multirowcell{3}{Deform2Self } & PSNR & 35.58 & 31.83 & 28.35 & 22.70 & 30.57 & 29.08 & 26.98 & 22.54 & 31.38 & 29.84 & 27.69 & 23.06 \\ 
        ~ & SSIM & 88.65 & 79.65 & 75.91 & 69.15 & 91.53 & 83.51 & 79.77 & 74.20 & 89.32 & 81.35 & 77.46 & 71.39 \\ 
        ~ & FSIM & 96.85 & 95.30 & 93.43 & 89.06 & 95.96 & 94.92 & 93.63 & 90.78 & 93.94 & 92.86 & 91.59 & 88.83 \\ \hline

        \multirowcell{3}{BDCA } & PSNR & 42.07 & 34.37 & 31.33 & 26.90 & \underline{41.27} & 32.44 & 29.77 & 25.21 & \underline{41.90} & 33.97 & 31.14 & 26.80  \\ 
        ~ & SSIM & 76.15 & 67.69 & 64.32 & 53.96 & 81.02 & 73.93 & 68.65 & 59.38 & 79.67 & 69.84 & 62.45 & 52.34  \\ 
        ~ & FSIM & \underline{99.21} & 94.49 & 93.49 & 86.01 & \cellcolor[HTML]{D4FFD3}\tBF{99.33} & 96.61 & 94.78 & 89.19 & \underline{99.11} & 95.42 & 93.52 & 87.17  \\ \hline
        
        \multirowcell{3}{CPnP } & PSNR & \underline{42.30} & \underline{37.04} & \underline{34.52} & \underline{30.61} & 41.25 & \underline{35.03} & \underline{32.28} & \underline{28.57} & 41.71 & \underline{36.15} & \underline{33.47} & \underline{29.87}  \\ 
        ~ & SSIM & \underline{96.21} & \underline{94.25} & \underline{91.52} & \cellcolor[HTML]{D4FFD3}\tBF{84.30} & \underline{96.87} & \underline{94.94} & \underline{93.17} & \cellcolor[HTML]{D4FFD3}\tBF{88.15} & \underline{96.47} & \underline{93.77} & \underline{90.95} & \underline{83.89}  \\ 
        ~ & FSIM & 99.20 & \underline{97.62} & \underline{96.08} & \underline{92.03} & \underline{99.22} & \underline{97.72} & \underline{96.38} & \underline{93.36} & 98.98 & \underline{97.12} & \underline{95.15} & \underline{90.85}  \\ \hline
        
        \multirowcell{3}{Ours \\ (Algorithm~\ref{alg:PnP-iBPDCA})} & PSNR & \cellcolor[HTML]{D4FFD3}\tBF{43.56} & \cellcolor[HTML]{D4FFD3}\tBF{37.52} & \cellcolor[HTML]{D4FFD3}\tBF{34.86} & \cellcolor[HTML]{D4FFD3}\tBF{30.71} & \cellcolor[HTML]{D4FFD3}\tBF{42.00} & \cellcolor[HTML]{D4FFD3}\tBF{35.28} & \cellcolor[HTML]{D4FFD3}\tBF{32.49} & \cellcolor[HTML]{D4FFD3}\tBF{28.72} & \cellcolor[HTML]{D4FFD3}\tBF{42.78} & \cellcolor[HTML]{D4FFD3}\tBF{36.62} & \cellcolor[HTML]{D4FFD3}\tBF{34.02} & \cellcolor[HTML]{D4FFD3}\tBF{30.45}  \\ 
        ~ & SSIM & \cellcolor[HTML]{D4FFD3}\tBF{98.68} & \cellcolor[HTML]{D4FFD3}\tBF{94.97} & \cellcolor[HTML]{D4FFD3}\tBF{93.50} & \underline{82.26} & \cellcolor[HTML]{D4FFD3}\tBF{98.58} & \cellcolor[HTML]{D4FFD3}\tBF{95.61} & \cellcolor[HTML]{D4FFD3}\tBF{93.88} & \underline{87.26} & \cellcolor[HTML]{D4FFD3}\tBF{98.32} & \cellcolor[HTML]{D4FFD3}\tBF{94.76} & \cellcolor[HTML]{D4FFD3}\tBF{92.45} & \cellcolor[HTML]{D4FFD3}\tBF{85.92}  \\ 
        ~ & FSIM &  \cellcolor[HTML]{D4FFD3}\tBF{99.40}& \cellcolor[HTML]{D4FFD3}\tBF{97.84} & \cellcolor[HTML]{D4FFD3}\tBF{96.24} & \cellcolor[HTML]{D4FFD3}\tBF{92.28} & \cellcolor[HTML]{D4FFD3}\tBF{99.33} & \cellcolor[HTML]{D4FFD3}\tBF{97.87} & \cellcolor[HTML]{D4FFD3}\tBF{96.52} & \cellcolor[HTML]{D4FFD3}\tBF{93.41} & \cellcolor[HTML]{D4FFD3}\tBF{99.18} & \cellcolor[HTML]{D4FFD3}\tBF{97.40} & \cellcolor[HTML]{D4FFD3}\tBF{95.79} & \cellcolor[HTML]{D4FFD3}\tBF{92.37} \\ \hline
    \end{tabular}}
\caption{Average Rician noise noise removal results with indexes PSNR (dB), SSIM (\%), and FSIM (\%). Three MR sequences T1w, T2w, and PDw are considered on the Brainweb dataset with different Rician noise levels. The top results are highlighted in \colorbox[HTML]{BBFFBB}{\tBF{green}}, while the second-best results are underlined.
}
\label{tab:brainweb_Rician_denoising}
\end{table}

In Figure~\ref{fig:avg_num_iter_brainweb}, we display the average number of iterations on T1-weighted (T1w), T2-weighted (T2w), and PD-weighted (PDw) sequences with different Rician noise levels. In each sequence, as the Rician noise level $\sigma$ increases, a more pronounced impact of inertial can be observed. This aligns with our expectations, as the noise level $\sigma$ increases, the upper bound on inertial parameters $\Pi(\lambda)$ increases as well, allowing a wider range of extrapolation and highlighting the efficacy of our inertial strategy. On the other hand, we know that even under the setting of a small Rician noise level $\sigma=2.55$, the inertial step increases the practical convergence by almost 100 iteration steps in the T1w sequence. Hence, from both Rician noise levels and MR sequences aspects, there is a significant need for the inertial step. 

\begin{figure}[t!]
\subfigure[\scriptsize{Original}]{
    \zoomincludgraphic{0.235\textwidth}{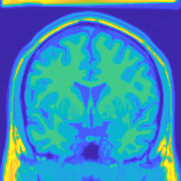}{0.75}{0.25}{0.9}{0.4}{2.7}{help_grid_off}{up_right}{line_connection_off}{3}{red}{1.5}{red}
}\hspace{-0.345in}
  \subfigure[\scriptsize{Observed (25.39/65.67)}]{
	\zoomincludgraphic{0.235\textwidth}{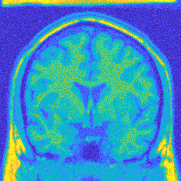}{0.75}{0.25}{0.9}{0.4}{2.7}{help_grid_off}{up_right}{line_connection_off}{3}{red}{1.5}{red}  
	}\hspace{-0.345in}
   \subfigure[\scriptsize{Prox-GS (28.19/79.14)}]{
	\zoomincludgraphic{0.235\textwidth}{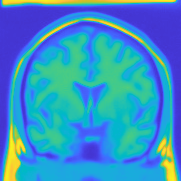}{0.75}{0.25}{0.9}{0.4}{2.7}{help_grid_off}{up_right}{line_connection_off}{3}{red}{1.5}{red} 
	}\hspace{-0.345in}
    \subfigure[\scriptsize{WDNN (32.62/86.98)}]{
	\zoomincludgraphic{0.235\textwidth}{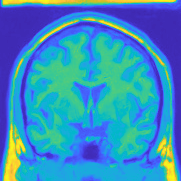}{0.75}{0.25}{0.9}{0.4}{2.7}{help_grid_off}{up_right}{line_connection_off}{3}{red}{1.5}{red}  
	}

  \subfigure[\scriptsize{Deform2Self (29.93/84.74)}]{
	\zoomincludgraphic{0.235\textwidth}{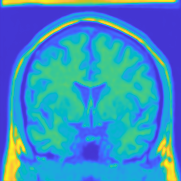}{0.75}{0.25}{0.9}{0.4}{2.7}{help_grid_off}{up_right}{line_connection_off}{3}{red}{1.5}{red} 
	}\hspace{-0.345in}
 \subfigure[\scriptsize{BDCA (30.71/84.36)}]{
	\zoomincludgraphic{0.235\textwidth}{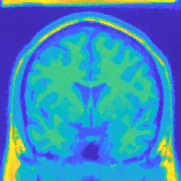}{0.75}{0.25}{0.9}{0.4}{2.7}{help_grid_off}{up_right}{line_connection_off}{3}{red}{1.5}{red}  
	}\hspace{-0.345in}
 \subfigure[\scriptsize{CPnP (33.75/91.89)}]{
	\zoomincludgraphic{0.235\textwidth}{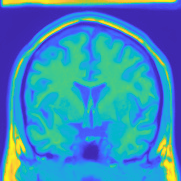}{0.75}{0.25}{0.9}{0.4}{2.7}{help_grid_off}{up_right}{line_connection_off}{3}{red}{1.5}{red}  
	}\hspace{-0.345in}
 \subfigure[\scriptsize{Ours (34.28/94.55)}]{
	\zoomincludgraphic{0.235\textwidth}{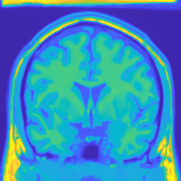}{0.75}{0.25}{0.9}{0.4}{2.7}{help_grid_off}{up_right}{line_connection_o }{3}{red}{1.5}{red} 
	}
 \caption{Rician noise removal results (PSNR(dB)/SSIM($\%$)) with noise level 12.75 on the Coronal viewpoint of an MR image from the T1w sequences of the BrainWeb dataset. Visualization comparison of our scheme and some state-of-the-art Rician noise removal methods: (c) Prox-GS \citep{hurault2022proximal}, (d) BDCA \citep{wu2022efficient} (e) WDNN \citep{you2019denoising} (f) Deform2Self \citep{xu2021deformed2self} (g) CPnP \citep{wei2023nonconvex}, and (h) Our PnP-iBPDCA.}
    \label{fig:Rician_denoising_t1w}
\end{figure}

\subsubsection{Comparison with State-of-the-art Methods}
To validate the effectiveness of our proposed method in handling MR images corrupted with various levels of Rician noise, we comprehensively compare our proposed PnP-iBPDCA algorithm with several state-of-the-art techniques. Specifically, WDNN \citep{you2019denoising}, Deform2Self \citep{xu2021deformed2self}, BDCA \citep{wu2022efficient}, and CPnP \citep{wei2023nonconvex} are compared. All comparison codes were either sourced from their officially published versions or kindly provided by the respective authors.

\begin{figure}[t!]
 \subfigure[\scriptsize{Original}]{
	\zoomincludgraphic{0.235\textwidth}{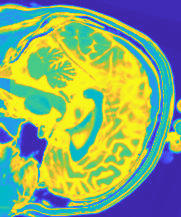}{0.38}{0.62}{0.53}{0.75}{2.7}{help_grid_off}{bottom_right}{line_connection_off}{3}{red}{1.5}{red} 
	}\hspace{-0.345in}
  \subfigure[\scriptsize{Observed (25.76/75.99)}]{
	\zoomincludgraphic{0.235\textwidth}{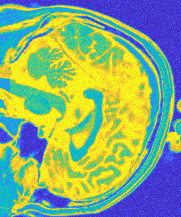}{0.38}{0.62}{0.53}{0.75}{2.7}{help_grid_off}{bottom_right}{line_connection_off}{3}{red}{1.5}{red} 
	}\hspace{-0.345in}
   \subfigure[\scriptsize{Prox-GS (27.54/82.89)}]{
	\zoomincludgraphic{0.235\textwidth}{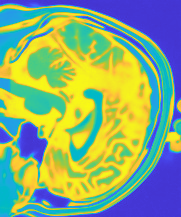}{0.38}{0.62}{0.53}{0.75}{2.7}{help_grid_off}{bottom_right}{line_connection_off}{3}{red}{1.5}{red} 
	}\hspace{-0.345in}
    \subfigure[\scriptsize{WDNN (30.64/89.86)}]{
	\zoomincludgraphic{0.235\textwidth}{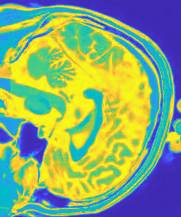}{0.38}{0.62}{0.53}{0.75}{2.7}{help_grid_off}{bottom_right}{line_connection_off}{3}{red}{1.5}{red} 
	}

 \subfigure[\scriptsize{Deform2Self (26.83/83.94)}]{
	\zoomincludgraphic{0.235\textwidth}{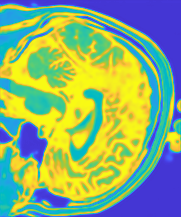}{0.38}{0.62}{0.53}{0.75}{2.7}{help_grid_off}{bottom_right}{line_connection_off}{3}{red}{1.5}{red} 
	}\hspace{-0.345in}
 \subfigure[\scriptsize{BDCA (29.62/88.65)}]{
	\zoomincludgraphic{0.235\textwidth}{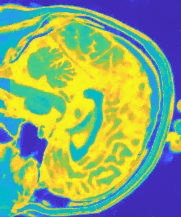}{0.38}{0.62}{0.53}{0.75}{2.7}{help_grid_off}{bottom_right}{line_connection_off}{3}{red}{1.5}{red} 
	}\hspace{-0.345in}
 \subfigure[\scriptsize{CPnP (31.78/93.25)}]{
	\zoomincludgraphic{0.235\textwidth}{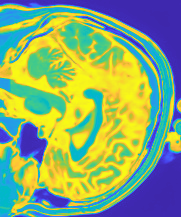}{0.38}{0.62}{0.53}{0.75}{2.7}{help_grid_off}{bottom_right}{line_connection_off}{3}{red}{1.5}{red} 
	}\hspace{-0.345in}
 \subfigure[\scriptsize{Ours (32.14/93.97)}]{
	\zoomincludgraphic{0.235\textwidth}{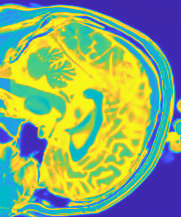}{0.38}{0.62}{0.53}{0.75}{2.7}{help_grid_off}{bottom_right}{line_connection_off}{3}{red}{1.5}{red} 
	}
 \caption{Rician noise removal quantitative results (PSNR(dB)/SSIM($\%$)) with noise level 12.75 on Sagittal viewpoint of an MR image from the T2w sequences of the BrainWeb dataset. Visualization comparison of our scheme and some state-of-the-art Rician noise removal methods: (c) Prox-GS \citep{hurault2022proximal}, (d) BDCA \citep{wu2022efficient} (e) WDNN \citep{you2019denoising} (f) Deform2Self \citep{xu2021deformed2self} (g) CPnP \citep{wei2023nonconvex}, and (h) Our PnP-iBPDCA.}
    \label{fig:Rician_denoising_2}
 \end{figure}
To provide a comprehensive overview of state-of-the-art methods in Ricain noise removal, we summarize the aforementioned approaches as follows: BDCA, an iterative approach with a total-variation prior; WDNN, a convolutional neural network trained end-to-end; Deform2Self, a self-supervised spatial transformer; and CPnP, a PnP method that incorporates RealSN-DnCNN \citep{ryu2019plug}. Additionally, in our subsequent experiment, the Prox-GS denoiser is also included, which is originally designed for Gaussian noise removal \citep{hurault2022proximal}, to establish a baseline.

Typically, MR scans are three-dimensional, with each slide exhibiting significant similarity. Since we are processing the volume as two-dimensional images, we gathered $142$ representative slides from the BrainWeb dataset. 
We present the average Rician noise removal results on three MR sequences of the Brainweb dataset in Table \ref{tab:brainweb_Rician_denoising}. For different Rician noise levels, the PSNR, SSIM, and FSIM indexes are used to measure the performance of these Rician noise removal methods. 
From the quantitative results, our PnP-iBPDCA achieves the best average overall performance among all the methods evaluated. Notably, the difference between our PnP-iBPDCA and other state-of-the-art methods is particularly pronounced in T2w and PDw sequences, indicating that our proposed framework generalizes effectively across various pulse sequences. Furthermore, end-to-end trained networks, such as WDNN\ citep{you2019denoising} and Deform2self\citep{xu2021deformed2self}, face challenges in handling low levels of Rician noise (\ie\ $\sigma = 2.55$). In contrast, our proposed framework demonstrates robust generalization across a wide range of noise levels, from low ($\sigma = 2.55$) to high ($\sigma = 25.5$).

\begin{figure}[t!]
\subfigure[\scriptsize{Original}]{
	\zoomincludgraphic{0.235\textwidth}{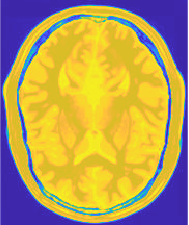}{0.5}{0.65}{0.65}{0.8}{2.7}{help_grid_off}{bottom_left}{line_connection_off}{3}{red}{1.5}{red} 
	}\hspace{-0.345in}
  \subfigure[\scriptsize{Observed (19.62/37.97)}]{
	\zoomincludgraphic{0.235\textwidth}{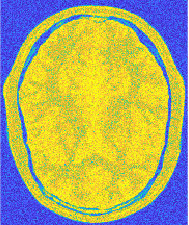}{0.5}{0.65}{0.65}{0.8}{2.7}{help_grid_off}{bottom_left}{line_connection_off}{3}{red}{1.5}{red}  
	}\hspace{-0.345in}
   \subfigure[\scriptsize{Prox-GS (23.13/69.88)}]{
	\zoomincludgraphic{0.235\textwidth}{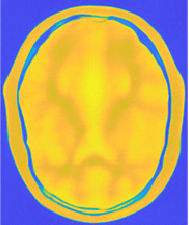}{0.5}{0.65}{0.65}{0.8}{2.7}{help_grid_off}{bottom_left}{line_connection_off}{3}{red}{1.5}{red} 
	}\hspace{-0.345in}
    \subfigure[\scriptsize{WDNN (29.29/80.70) }]{
	\zoomincludgraphic{0.235\textwidth}{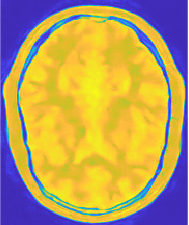}{0.5}{0.65}{0.65}{0.8}{2.7}{help_grid_off}{bottom_left}{line_connection_off}{3}{red}{1.5}{red}  
	}

  \subfigure[\scriptsize{Deform2Self (23.41/75.41)}]{
	\zoomincludgraphic{0.235\textwidth}{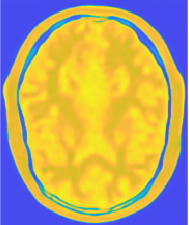}{0.5}{0.65}{0.65}{0.8}{2.7}{help_grid_off}{bottom_left}{line_connection_off}{3}{red}{1.5}{red} 
	}\hspace{-0.345in}
 \subfigure[\scriptsize{BDCA (26.79/76.64)}]{
	\zoomincludgraphic{0.235\textwidth}{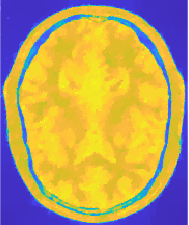}{0.5}{0.65}{0.65}{0.8}{2.7}{help_grid_off}{bottom_left}{line_connection_off}{3}{red}{1.5}{red}  
	}\hspace{-0.345in}
 \subfigure[\scriptsize{CPnP (30.09/85.52)}]{
	\zoomincludgraphic{0.235\textwidth}{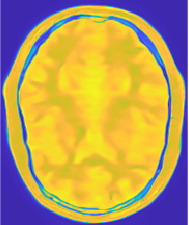}{0.5}{0.65}{0.65}{0.8}{2.7}{help_grid_off}{bottom_left}{line_connection_off}{3}{red}{1.5}{red}  
	}\hspace{-0.345in}
 \subfigure[\scriptsize{Ours (30.94/88.98)}]{
	\zoomincludgraphic{0.235\textwidth}{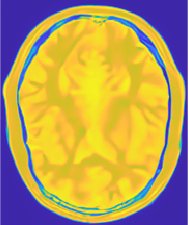}{0.5}{0.65}{0.65}{0.8}{2.7}{help_grid_off}{bottom_left}{line_connection_off}{3}{red}{1.5}{red} 
	}
 \caption{Rician noise removal results (PSNR(dB)/SSIM($\%$)) with noise level 25.5 on the Axial viewpoint of an MR image from the PDw sequences of the BrainWeb dataset. Visualization comparison of our scheme and some state-of-the-art Rician noise removal methods: (c) Prox-GS \citep{hurault2022proximal}, (d) BDCA \citep{wu2022efficient} (e) WDNN \citep{you2019denoising} (f) Deform2Self \citep{xu2021deformed2self} (g) CPnP \citep{wei2023nonconvex}, and (h) Our PnP-iBPDCA.}
    \label{fig:Rician_denoising}
\end{figure}

To better present the effectiveness of the proposed algorithm, we display the visual results of Rician noise removal on three MR sequences with a ``parula'' color map. We present Rician noise removal results on a single T1w MR image in Figure \ref{fig:Rician_denoising_t1w} with noise level $\sigma =12.75$. Compared to state-of-the-art techniques, our proposed method preserves fine details. Even though CPnP \citep{wei2023nonconvex} has a very similar quantitative result to our proposed framework, we observe that CPnP created artifacts and distorted the shape of lateral ventricles from the zoomed-in part while our proposed scheme best preserves its shape among all state-of-the-art methods.

In the T2w MR sequence, we present the visual results in Figure \ref{fig:Rician_denoising_2} with Rician noise $\sigma =12.75$. We observe that end-to-end trained networks specially for the task of Rician noise removal often over-smooth and lose fine details. In contrast, our proposed PnP-iBPDCA method effectively manages Rician noise while preserving the intricate structures of tissues and brain anatomy. In particular, our proposed method preserves the fine line of tentorium, shown in the zoomed-in area, and best resembles the ground truth. A clear line of the tentorium is critical in measuring the tentorial angle for the diagnosis of achondroplasia.

Finally, we present the visual results of the PDw MR sequence with heavy Rician noise ($\sigma=25.5$) in Figure \ref{fig:Rician_denoising}. As claimed before, our proposed PnP-iBPDCA can better preserve the detailed information while removing the Rician noise. Under the image corrupted by heavy noise, our results can still recover a clear structure. 
In the zoomed-in area, our proposed method shows a much clearer margin of the irregular structure, known as an infarct, compared to state-of-the-art methods. This tissue is essential in clinical settings for detecting old cerebral infarction and cerebral atrophy.

Overall, the findings from both the quantitative and visual results reveal that our proposed method, PnP-iBPDCA, outperforms other techniques in preserving fine brain tissue details and maintaining their original coloration while effectively removing Rician noise. In contrast, existing iterative-based, deep learning-based, and PnP methods exhibit artifacts, discoloration, and distortion, while some retain residual noise.

\subsection{Application to Phase Retrieval}
In this subsection, we devote to applying our proposed method to solve the phase retrieval problem \eqref{eq:phase_retrieval}. Note that the additive noise $\omega$ in \eqref{eq:phase_retrieval_model} can be modeled as either shot noise or additive white Gaussian noise (AWGN). Both types of noises will be experimented with in this section.
As mentioned in \eqref{decompostion}, we can reformulate the model \eqref{eq:phase_retrieval} into the form of Problem \eqref{eq:DCA_model} with 
\begin{equation}\label{decomposition2}
    f_1({\bf x}):= \frac{1}{4} \left\Vert |\mathcal{K}{\bf x}|^2\right \Vert^2 + \frac{1}{4} \Vert {\bf d} \Vert^2,~
    f_2({\bf x}):= \frac{1}{2} \left\langle {\bf d},|\mathcal{K}{\bf x}|^2\right\rangle, ~{\rm and}~ 
    g({\bf x}):= \varsigma \vartheta({\bf x}). 
\end{equation}

Indeed, $\vartheta(\cdot)$ in \eqref{decomposition2} represents the generic regularization term, common choice for handcrafted priors are translation-invariant Haar pyramid (TIHP) tight frame \citep{shi2015sparse} and total variation \citep{chang2016phase,gaur2015sparsity,tillmann2016dolphin}. On the other hand, deep prior was also widely used to solve the phase retrieval problem under the plug-and-play (PnP) framework \citep{katkovnik2017phase,metzler2018prdeep,wei2022tfpnp}. However, they shared a common problem which is the lack of theoretical convergence guarantee. To address this challenge, we aim to apply our proposed convergent framework.

Neither $f_1 - f_2$ nor $f_1$ in the above setting possesses a global Lipschitz gradient with respect to Euclidean geometry. Observing that, \cite{bolte2018first} was the first to propose tackling it with Bregman variant algorithms. Inheriting their work, we adopt their kernel function $h = \frac{1}{4} \Vert \cdot \Vert^4 + \frac{1}{2} \Vert \cdot \Vert^2$ and apply our proposed PnP-iBPDCA to tackle the phase retrieval problem. 
Before that, we first present some properties to make sure our kernel function $h = \frac{1}{4} \Vert \cdot \Vert^4 + \frac{1}{2} \Vert \cdot \Vert^2$ satisfies the assumptions of the proposed framework.

\begin{proposition}
    Let $h = \frac{1}{4} \Vert \cdot \Vert^4 + \frac{1}{2} \Vert \cdot \Vert^2$, then the kernel function $h$ enjoys the following properties:
    \begin{itemize}
        \item[(i)] $h\in \mathcal{C}^2$, is 1-strongly convex and of Legendre type.
        \item[(ii)] $\nabla h$ is Lipschitz is bounded on any bounded subset of $\mathbb{R}^n$.
        \item[(iii)]  $\nabla h$ is a bijection from $\mathbb{R}^n$ to $\mathbb{R}^n$, and its inverse $(\nabla h)^{-1} = \nabla h^*$.
    \end{itemize}
\end{proposition}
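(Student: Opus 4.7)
The plan is to verify the three conclusions by direct computation from $\nabla h({\bf x}) = (\|{\bf x}\|^2+1){\bf x}$ and $\nabla^2 h({\bf x}) = 2{\bf x}{\bf x}^{\top} + (\|{\bf x}\|^2+1)I_d$, together with one appeal to the Legendre conjugacy result already stated as Proposition~\ref{prop:Legendre}.

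For (i), $h$ is manifestly $\mathcal{C}^2$ since it is a polynomial in the coordinates. For any ${\bf v}\in\mathbb{R}^n$,
\begin{equation*}
{\bf v}^{\top}\nabla^2 h({\bf x}){\bf v} = 2(\langle {\bf x},{\bf v}\rangle)^2 + (\|{\bf x}\|^2+1)\|{\bf v}\|^2 \ge \|{\bf v}\|^2,
\end{equation*}
which gives $1$-strong convexity and, in particular, strict convexity. To finish (i), I would recall that Legendre type means essentially smooth plus strictly convex on ${\rm int\,dom}(h)$; since $h$ is finite and differentiable everywhere on $\mathbb{R}^n$, we have ${\rm dom}(h)=\mathbb{R}^n$ with empty boundary, so essential smoothness is vacuous, and $h$ is Legendre.

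For (ii), I would bound the operator norm $\|\nabla^2 h({\bf x})\|_{\rm op}\le 2\|{\bf x}\|^2+\|{\bf x}\|^2+1 = 3\|{\bf x}\|^2+1$, so on any bounded set $\{\|{\bf x}\|\le R\}$ the Hessian is uniformly bounded by $3R^2+1$, giving local Lipschitz continuity of $\nabla h$ via the mean value theorem on any convex bounded set (and then extending to arbitrary bounded sets by enlarging to a ball).

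For (iii), the cleanest route is to apply Proposition~\ref{prop:Legendre}: since $h$ is of Legendre type by (i), $\nabla h$ is a bijection from ${\rm int\,dom}(h)$ onto ${\rm int\,dom}(h^*)$, with inverse $\nabla h^*$. It remains to identify both domains as $\mathbb{R}^n$. The domain of $h$ is $\mathbb{R}^n$ by construction; for $h^*$, note that $h({\bf x})\ge \tfrac14\|{\bf x}\|^4$ is supercoercive, so $h^*({\bf y})=\sup_{\bf x}\{\langle {\bf x},{\bf y}\rangle-h({\bf x})\}$ is finite for every ${\bf y}\in\mathbb{R}^n$, hence ${\rm dom}(h^*)=\mathbb{R}^n$. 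As a sanity check I would also record the direct inversion: given ${\bf y}\ne 0$, any preimage ${\bf x}$ must be parallel to ${\bf y}$, so writing ${\bf x}=\alpha\,{\bf y}/\|{\bf y}\|$ reduces the equation to $\alpha^3+\alpha=\|{\bf y}\|$, which has a unique real root by strict monotonicity of $\alpha\mapsto\alpha^3+\alpha$.

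None of the three parts presents a real obstacle; the only mild subtlety is connecting the Legendre bijection in Proposition~\ref{prop:Legendre} with the full domain statement in (iii), which is resolved by verifying supercoercivity of $h$ so that ${\rm dom}(h^*)=\mathbb{R}^n$.
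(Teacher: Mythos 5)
The paper states this proposition without proof --- it is presented as a known fact about the quartic--quadratic kernel (the relevant computations appear in Proposition~5.1 of the cited work of Bolte et al.\ on first-order methods beyond Lipschitz gradient continuity), so there is no in-paper argument to compare yours against. Your verification is correct and complete: the Hessian formula $\nabla^2 h({\bf x}) = 2{\bf x}{\bf x}^{\top} + (\|{\bf x}\|^2+1)I_d$ matches what the paper itself uses later in the convexity analysis of $\psi_\gamma \circ \nabla h^*$, the lower bound ${\bf v}^{\top}\nabla^2 h({\bf x}){\bf v} \ge \|{\bf v}\|^2$ gives $1$-strong convexity, and your observation that essential smoothness is vacuous when ${\rm dom}(h)=\mathbb{R}^n$ is the right way to settle the Legendre claim. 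The one step worth making explicit --- and you do --- is that part (iii) does not follow from Proposition~\ref{prop:Legendre} alone: that result only gives a bijection between ${\rm int}\,{\rm dom}(h)$ and ${\rm int}\,{\rm dom}(h^*)$, so identifying ${\rm dom}(h^*)=\mathbb{R}^n$ via supercoercivity of $h$ is genuinely needed, and your direct inversion (reducing to the strictly increasing scalar equation $\alpha^3+\alpha=\|{\bf y}\|$) is a clean independent confirmation consistent with the explicit form of $\nabla h^*$ the paper uses in the phase-retrieval iteration.
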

To apply the proposed iBPDCA and PnP-iBPDCA, we have to ensure Assumptions~\ref{asm:assumption_on_h} holds.  
\begin{proposition} 
    Let $f_1, f_2, g $ be splited according to \eqref{decomposition2}. Then, we have
    \begin{itemize}
        \item[(i)] $f_1, f_2$ are proper and convex.
        \item[(ii)] $(f_1,h)$ is $L$-smad (see Definition~\ref{lem:full_extended_descent_lemma}) on $\mathbb{R}^n$ for any  $L\geq 3 \sum_{r=1}^m \Vert \mathcal{K}_r \Vert^2$.
    \end{itemize}
\end{proposition}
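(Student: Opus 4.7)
The plan is to treat (i) and (ii) separately, deriving (i) from elementary composition rules and (ii) from a direct Hessian comparison $L\nabla^2 h \succeq \nabla^2 f_1$.

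For part (i), the approach is to recognise each summand of $f_1$ and $f_2$ as a convex function of a linear expression. Writing out $f_1({\bf x}) = \tfrac{1}{4}\sum_{r,j}|(\mathcal{K}_r{\bf x})_j|^{4} + \tfrac{1}{4}\|{\bf d}\|^2$, the scalar map $z \mapsto |z|^4 = (|z|^2)^2$ on $\mathbb{C}$ is convex because $|z|^2$ is convex and non-negative and $t \mapsto t^2$ is convex non-decreasing on $\mathbb{R}_+$. Composing with the linear maps ${\bf x}\mapsto (\mathcal{K}_r{\bf x})_j$ preserves convexity, so $f_1$ is a sum of convex functions plus a constant. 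For $f_2({\bf x}) = \tfrac{1}{2}\sum_{r,j} d_{r,j}|(\mathcal{K}_r{\bf x})_j|^{2}$, each $|(\mathcal{K}_r{\bf x})_j|^2$ is convex and (as is standard for physical phaseless measurements) the weights $d_{r,j} \geq 0$, so $f_2$ is a non-negatively weighted sum of convex functions and hence convex. Properness is immediate in both cases.

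For part (ii), since $f_1$ is already convex, it suffices to verify the one-sided NoLips condition, namely that $Lh - f_1$ is convex on $\mathbb{R}^n$. As $h,f_1 \in \mathcal{C}^2$, this reduces to the semidefinite inequality $L\nabla^2 h({\bf x}) \succeq \nabla^2 f_1({\bf x})$ for all ${\bf x}\in\mathbb{R}^n$. The first step is to compute
\begin{equation*}
\nabla^2 h({\bf x}) = (1+\|{\bf x}\|^2)I_n + 2{\bf x}{\bf x}^{\top} \succeq \|{\bf x}\|^2\, I_n.
\end{equation*}
The second step is to write $f_1 = \tfrac14\sum_i q_i^2$ up to a constant, where $q_i({\bf x}) = {\bf x}^{\top} A_i {\bf x}$ with $A_i$ the real symmetric positive semidefinite matrix encoding the $i$-th scalar measurement $|(\mathcal{K}{\bf x})_i|^2$. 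A direct chain-rule calculation yields
\begin{equation*}
\nabla^2 f_1({\bf x}) = 2\sum_i (A_i{\bf x})(A_i{\bf x})^{\top} + \sum_i q_i({\bf x})\, A_i .
\end{equation*}
Using $\|(A_i{\bf x})(A_i{\bf x})^{\top}\|_{\mathrm{op}} = \|A_i{\bf x}\|^2 \leq \|A_i\|^2\|{\bf x}\|^2$ and $q_i({\bf x})\leq \|A_i\|\,\|{\bf x}\|^2$, one obtains the coercive bound
\begin{equation*}
\nabla^2 f_1({\bf x}) \preceq 3\|{\bf x}\|^2 \Bigl(\sum_i \|A_i\|^2\Bigr) I_n .
\end{equation*}
Combining with $\nabla^2 h({\bf x})\succeq \|{\bf x}\|^2 I_n$ gives the required semidefinite comparison whenever $L\geq 3\sum_i\|A_i\|^2$. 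The final step is to regroup the individual rank-one measurement matrices $A_i$ according to the block structure of $\mathcal{K}$: since $\mathcal{K}_r^{\dagger}\mathcal{K}_r$ aggregates the matrices $A_{r,j}$ associated with the $r$-th modulation, one bounds $\sum_j \|A_{r,j}\|^2$ in terms of $\|\mathcal{K}_r\|^2$, yielding the announced threshold $L\geq 3\sum_{r=1}^m \|\mathcal{K}_r\|^2$.

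The routine pieces of this proof are part (i) and the expansions of $\nabla^2 h$ and $\nabla^2 f_1$. The step I expect to require the most care is the last one: aggregating the pointwise bounds $\|A_{r,j}\|^2$ across the measurement block $r$ to obtain a clean expression in $\|\mathcal{K}_r\|^2$ rather than $\|\mathcal{K}_r\|^4$ or a Frobenius norm. This is where the specific structure $\mathcal{K}_r = \mathcal{F}\mathcal{M}_r$ (unitary Fourier times a diagonal modulation) should be exploited, so that $\mathcal{K}_r^{\dagger}\mathcal{K}_r = \mathcal{M}_r^{\dagger}\mathcal{M}_r$ is diagonal and the per-row contributions can be compared directly with the operator norm of $\mathcal{K}_r$. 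Once this accounting is done, the $L$-smad conclusion follows immediately from Definition~\ref{lem:full_extended_descent_lemma} applied on all of $\mathbb{R}^n$.
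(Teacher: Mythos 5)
Your overall strategy coincides with the paper's: part (i) by elementary convexity, part (ii) by reducing $L$-smadness to the NoLips condition $L\nabla^2 h({\bf x})\succeq\nabla^2 f_1({\bf x})$, lower-bounding $\nabla^2 h({\bf x})=(1+\Vert{\bf x}\Vert^2)I+2{\bf x}{\bf x}^{\top}\succeq\Vert{\bf x}\Vert^2 I$ exactly as the paper does, and upper-bounding $\nabla^2 f_1$ by a multiple of $\Vert{\bf x}\Vert^2 I$. For (i) you argue by composition rules rather than the paper's direct Hessian computation ($\nabla^2 f_1=3\sum_r|\mathcal{K}_r{\bf x}|^2\mathcal{K}_r^{\dagger}\mathcal{K}_r\succeq0$ and $\nabla^2 f_2=\sum_r{\bf d}[r]\mathcal{K}_r^{\dagger}\mathcal{K}_r$); both are fine, and you correctly make explicit the nonnegativity of ${\bf d}$ that the paper's $\nabla^2 f_2\succeq0$ uses only implicitly.

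The genuine gap is in the last step of (ii): you never actually reach the announced constant $3\sum_{r=1}^m\Vert\mathcal{K}_r\Vert^2$. Your operator-norm bounds on the two pieces of $\nabla^2 f_1=2\sum_i(A_i{\bf x})(A_i{\bf x})^{\top}+\sum_i q_i({\bf x})A_i$ land you at $L\geq 3\sum_i\Vert A_i\Vert^2$, and for the rank-one measurement matrices $A_i$ one has $\Vert A_i\Vert=\Vert\mathcal{K}_i\Vert^2$, i.e.\ your route naturally produces a \emph{fourth} power of the row norms. The "regrouping" that is supposed to convert $\sum_j\Vert A_{r,j}\Vert^2$ into $\Vert\mathcal{K}_r\Vert^2$ is only described, flagged as delicate, and not carried out --- and it is precisely the step that produces the constant appearing in the statement, so the proposition is not actually proved. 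By contrast, the paper avoids this bookkeeping entirely by keeping the factorized quadratic form: it uses the exact identity $\langle{\bf u},\nabla^2 f_1({\bf x}){\bf u}\rangle=3\sum_r|\mathcal{K}_r{\bf x}|^2|\mathcal{K}_r{\bf u}|^2$ (your two Hessian pieces recombine into this single expression for rank-one $A_r$) and bounds it in one application of Cauchy--Schwarz per factor against $3\Vert{\bf x}\Vert^2\Vert{\bf u}\Vert^2\sum_r\Vert\mathcal{K}_r\Vert^2$. If you want to salvage your argument, you should work with this quadratic-form identity directly rather than with operator norms of the individual $A_i$, since the latter discards the coupling between the ${\bf x}$- and ${\bf u}$-factors that makes the paper's one-line bound possible.
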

    \begin{proof}  
(i) By the setting of $f_1$ in \eqref{decomposition2}, we can express it element-wise ${\bf d}[r] = |\mathcal{K}_r {\bf x}| + \omega[r]$.
            $ \nabla f_1({\bf x}) = \sum^m_{r=1}  |\mathcal{K}_r {\bf x}|^2 \mathcal{K}_r^{\dagger} \mathcal{K}_r {\bf x} $ and 
            $     \nabla^2 f_1({\bf x}) = 3\sum^m_{r=1}  |\mathcal{K}_r {\bf x}|^2 \mathcal{K}_r^{\dagger} \mathcal{K}_r 
             \succeq 0.$ 
            This completes the convexity of $f_2$.  
            Similarly, we have $\nabla f_2({\bf x}) = \sum_{r=1}^m  {\bf d}[r] \mathcal{K}_r^{\dagger} \mathcal{K}_r {\bf x}$ and $
                \nabla^2 f_2({\bf x}) = \sum_{r=1}^m  {\bf d}[r] \mathcal{K}_r^{\dagger} \mathcal{K}_r 
                 \succeq 0.$ This implies the convexity of $f_1$.  
            (ii) A tighter bound on $L$ is desirable such that a larger step size $\lambda$ can be chosen leading to a slower but more stable convergence when applying the PnP framework. Different from \citep{bolte2018first,takahashi2022new}, we have adopted a similar approach as \cite{godeme2023provable}. 
            Because both $f_1$ and $h$ are convex, we only need to ensure the noLips condition (\ie\  $Lh-f_1$ is convex).
            For all $ {\bf x,u} \in \mathbb{R}^n$, we have
            \begin{align*}
                \left \langle {\bf u}, \nabla^2 f_1({\bf x}) {\bf u} \right \rangle &= 
                3\sum^m_{r=1}  |\mathcal{K}_r {\bf x}|^2 |\mathcal{K}_r {\bf u}|^2\leq 3 \Vert {\bf x} \Vert^2 \Vert {\bf u} \Vert^2 \sum_{r=1}^m \Vert \mathcal{K}_r \Vert^2, 
            \end{align*}
            and
            \begin{align*}
                \left \langle {\bf u} , \nabla ^2 h({\bf x}) {\bf u} \right \rangle &=\left( \Vert {\bf x} \Vert^2 + 1 \right) \Vert {\bf u} \Vert^2 + 2| \left \langle {\bf x,u} \right \rangle | ^2  \geq \Vert {\bf x} \Vert^2 \Vert {\bf u} \Vert^2.
            \end{align*}
            Thus for all $L\geq 3 \sum_{r=1}^m \Vert \mathcal{K}_r \Vert^2$, we have 
            $L \nabla^2 h({\bf x}) - \nabla^2 f_1({\bf x}) \succeq 0$,
            which means $Lh-f_1$ is convex. 
            This completes the proof. 
    \end{proof}
 \begin{remark}
            The above method has the advantage of avoiding eigenvalue computation, where classically \cite{bolte2018first} proposed to ensure convexity of $Lh-f_1$ through finding a $L>0$ such that $L\lambda_{\min}(\nabla^2 h({\bf x})) \geq \lambda_{\max}(\nabla^2 f_1({\bf x}))$. In our experiment, we use the built-in function{\verb| torch.linAlgorithmeigvals()|} to deduce the smallest possible value of $L$.
 \end{remark}

Taking first-order derivative of $f_1$ and $f_2$, we have
\[
    \nabla f_1({\bf x}) = \Re\left(\mathcal{K}^{\dagger}[\mathcal{K}{\bf u}\odot|\mathcal{K} {\bf x}|^2]\right) \quad \text{and} \quad \nabla f_2({\bf x}) = \Re \left( \mathcal{K}^{\dagger} [\mathcal{K}{\bf x} \odot {\bf d} ] \right).
\]     
Now we can apply Algorithm~\ref{alg:PnP-iBPDCA} and derive the following iterative scheme for solving \eqref{eq:phase_retrieval} as follows:
\begin{align*}
    \xi^k &= \Re \left( \mathcal{K}^{\dagger} [\mathcal{K} {\bf u} \odot {\bf d} ] \right), \\
    {\bf x}^{k+1} &= \mathcal{D}_\gamma \circ \nabla h^* (\nabla h({\bf x}^k) - \lambda (\nabla f_1({\bf x}^k) -\xi^k)),
\end{align*}
where $\nabla h^*({\bf x}) = t^* {\bf x}$, $t^*$ represents the \textit{unique positive real root} of the polynomial $t^3 \Vert {\bf x} \Vert^2 + t + 1 = 0$ \cite[Proposition~5.1]{bolte2018first}.

In Proposition~\ref{prop:bregman_denoiser_defined}, it has been assumed that $\psi_\gamma \circ \nabla h^*$ is strictly convex. However, with the newly proposed construction of $\psi_\gamma$ along with $h=\frac{1}{4}\Vert \cdot \Vert^4 +\frac{1}{2}\Vert \cdot \Vert^2$, the convexity of $\psi_\gamma \circ \nabla h^*$ remains unknown. In the following, we aim to illustrate this issue.

\begin{lemma} {\rm{({\bf Validation the convexity of $\psi_\gamma \circ \nabla h^*$})}}
     Let $h  = \frac{1}{4} \|\cdot\|^4 +\frac{1}{2} \|\cdot\|^2$ and $\mathcal{D}_\gamma$ is the GS denoiser defined in \eqref{gaussian_denoiser}.  Assume ${\rm Im}(\mathcal{D}_\gamma )\subset  {\rm int}{\rm dom}(h)$. Then, $\psi_\gamma \circ \nabla h^*$ is convex on $ {\rm int}{\rm dom}(h^*)$, where $\psi$ is given in Proposition~\ref{prop:bregman_denoiser_defined}.
\end{lemma}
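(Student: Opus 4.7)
The strategy is to verify convexity by computing the Hessian of $\psi_\gamma \circ \nabla h^*$ explicitly and showing it is positive semidefinite on $\operatorname{int}\operatorname{dom}(h^*) = \mathbb{R}^n$, leveraging the explicit form of $\nabla^2 h$ for the quartic kernel together with the sub-unit Lipschitz assumption $L_{g_\gamma} < 1$ on the GS denoiser. First, I would compute the gradient via the chain rule: using $\nabla^2 h^*({\bf z}) = (\nabla^2 h({\bf y}))^{-1}$ at ${\bf y} = \nabla h^*({\bf z})$ (valid because $h$ is $\mathcal{C}^2$ and of Legendre type) together with the defining relation $\nabla \psi_\gamma({\bf y}) = \nabla^2 h({\bf y})({\bf y} - \nabla g_\gamma({\bf y}))$ from Proposition~\ref{prop:bregman_denoiser_defined}, the two factors of $\nabla^2 h$ cancel and we obtain $\nabla(\psi_\gamma \circ \nabla h^*)({\bf z}) = \mathcal{D}_\gamma(\nabla h^*({\bf z}))$, mirroring the simplification already performed in the proof of that proposition.

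Differentiating once more with respect to ${\bf z}$ and using $J_{\mathcal{D}_\gamma}({\bf y}) = I - \nabla^2 g_\gamma({\bf y})$ yields
\[
\nabla^2 (\psi_\gamma \circ \nabla h^*)({\bf z}) = (I - \nabla^2 g_\gamma({\bf y}))(\nabla^2 h({\bf y}))^{-1}.
\]
For the quartic kernel a direct computation gives $\nabla^2 h({\bf y}) = (\|{\bf y}\|^2 + 1)I + 2{\bf y}{\bf y}^\top \succeq I$, so $(\nabla^2 h({\bf y}))^{-1}$ is symmetric positive definite; meanwhile, $L_{g_\gamma} < 1$ gives $I - \nabla^2 g_\gamma({\bf y}) \succeq (1 - L_{g_\gamma}) I \succ 0$. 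To conclude positive semidefiniteness of their product, I would invoke the congruence identity
\[
(I - \nabla^2 g_\gamma)(\nabla^2 h)^{-1} = (\nabla^2 h)^{-1/2} \bigl[(\nabla^2 h)^{-1/2}(I - \nabla^2 g_\gamma)(\nabla^2 h)^{-1/2}\bigr] (\nabla^2 h)^{1/2},
\]
which exhibits the Hessian as a similarity transform of the manifestly symmetric positive definite matrix in brackets; consequently its spectrum is nonnegative, and combined with symmetry of the Hessian this gives the desired PSD property and hence convexity of $\psi_\gamma \circ \nabla h^*$.

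The hard part is reconciling the product $(I - \nabla^2 g_\gamma)(\nabla^2 h)^{-1}$, viewed as a product of symmetric matrices that need not commute, with the fact that it must be a bona fide symmetric Hessian. I would address this the same way as in the proof of Lemma~\ref{lemma:weakly_convex_phi}: by leaning on the polygonal connectedness of $\operatorname{Im}(\mathcal{D}_\gamma) \subset \operatorname{int}\operatorname{dom}(h)$ and applying the characterization in Proposition~\ref{prop:charac_bregman} to ensure that $\psi_\gamma \circ \nabla h^*$ is well-defined as a scalar $\mathcal{C}^2$ potential on the relevant set, which forces its Hessian to be symmetric, so the spectral bound obtained above suffices to establish convexity.
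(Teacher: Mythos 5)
Your proof is correct, and it takes a genuinely different and in fact tighter route than the paper's. The paper differentiates $\eta_\gamma:=\psi_\gamma\circ\nabla h^*$ by the product rule, keeping $\nabla^2\psi_\gamma$ explicit; this drags in the third derivative $\nabla^3 h({\bf y})\cdot\mathcal{D}_\gamma({\bf y})=2(I\otimes{\bf y}+{\bf y}\otimes I)\cdot\mathcal{D}_\gamma({\bf y})$, a term that is sign-indefinite in general, and the paper can only conclude $\langle\nabla^2\eta_\gamma({\bf y}){\bf d},{\bf d}\rangle>0$ for ${\bf d},{\bf y}$ restricted to the nonnegative orthant (``the image manifold''). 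You instead observe the cancellation $\nabla(\psi_\gamma\circ\nabla h^*)({\bf z})=(\nabla^2 h({\bf y}))^{-1}\nabla^2 h({\bf y})\mathcal{D}_\gamma({\bf y})=\mathcal{D}_\gamma(\nabla h^*({\bf z}))$, so that one more differentiation gives the clean Hessian $(I-\nabla^2 g_\gamma({\bf y}))(\nabla^2 h({\bf y}))^{-1}$ with no third-derivative terms at all; since this is a product of two symmetric positive definite matrices it is similar to the symmetric positive definite matrix $(\nabla^2 h)^{-1/2}(I-\nabla^2 g_\gamma)(\nabla^2 h)^{-1/2}$, hence has positive spectrum, and being a Hessian it is symmetric, hence positive definite. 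This buys you convexity on all of ${\rm int}{\rm dom}(h^*)=\mathbb{R}^n$ as the lemma actually claims, rather than only on the nonnegative cone. Two small remarks: your displayed congruence identity as written equals $(\nabla^2 h)^{-1}(I-\nabla^2 g_\gamma)$ rather than $(I-\nabla^2 g_\gamma)(\nabla^2 h)^{-1}$; the correct factorization is $AB=B^{-1/2}\bigl(B^{1/2}AB^{1/2}\bigr)B^{1/2}$ with $A=I-\nabla^2 g_\gamma$ and $B=(\nabla^2 h)^{-1}$, but since $AB$ and $BA$ share a spectrum the conclusion is unaffected. And the symmetry/well-definedness issue you flag (whether $\psi_\gamma$ exists as a scalar $\mathcal{C}^2$ potential with the prescribed gradient) is inherited from Proposition~\ref{prop:bregman_denoiser_defined} and is not resolved by the paper either, so deferring to that proposition is consistent with the paper's own level of rigor.
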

 \begin{proof}
    Define $\eta_\gamma:= \psi_\gamma \circ \nabla h^*$. By simple computation, we have
$ \nabla \eta_\gamma ({\bf x}) = \nabla^2 h^*({\bf x}) \cdot \nabla \psi_\gamma (\nabla h^*({\bf x}))$ and
    \begin{align*}
        \nabla^2 \eta_\gamma({\bf x}) &= (\nabla^2 h^*({\bf x}))^2 \cdot \nabla^2 \psi_\gamma (\nabla h^*({\bf x})) + \nabla^3 h^*({\bf x}) \cdot \nabla \psi_\gamma (\nabla h^*({\bf x}))\\
        &= (\nabla^2 h^*({\bf x}))^2 \cdot \nabla^2 \psi_\gamma (\nabla h^*({\bf x})).
    \end{align*}
    It follows from the definition of $h$ that
    \begin{alignat*}{3}
        \nabla h({\bf y}) = (\Vert {\bf y}\Vert^2 + 1) {\bf y}, ~
        \nabla^2 h({\bf y}) = (\Vert {\bf y}\Vert^2 + 1) I_d + 2{\bf y}{\bf y}^{\top}, ~{\rm and}~
         \nabla^3 h({\bf y}) = 2(I_d \otimes {\bf y} + {\bf y} \otimes I_d).
       \end{alignat*} 
       Besides, it follows from Proposition 5.1 in \cite{bolte2018first} that
       \begin{alignat*}{3}  
        \nabla h^*({\bf x}) = t^* {\bf x}, \quad 
        &&\nabla^2 h^*({\bf x}) = t^* I_d, 
    \end{alignat*}
    Let ${\bf y} =\nabla h^{-1}({\bf x})$, then ${\bf x} =\nabla h({\bf y})$ and $       \nabla^2 \eta_\gamma(\nabla h({\bf y})) = (\nabla^2 h^*(\nabla h({\bf y})))^2 \cdot \nabla^2 \psi_\gamma({\bf y}) = (t^* I_d)^2 \cdot \nabla^2 \psi_\gamma({\bf y}).$ 
    Combining the definition of $\nabla \psi_\gamma$ in Proposition~\ref{prop:bregman_denoiser_defined}, we have
    \begin{align*}
        \nabla \psi_\gamma({\bf y}) &= \nabla^2 h({\bf y}) \cdot ({\bf y} - \nabla g_\gamma ({\bf y})) 
        = ((\Vert {\bf y} \Vert^2 + 1) I_d + 2{\bf y}{\bf y}^{\top}) \cdot \mathcal{D}_\gamma ({\bf y}),
        \end{align*}
        and
        \begin{align*}
        \nabla^2 \psi_\gamma ({\bf y}) &= \nabla^3 h({\bf y}) \cdot ({\bf y} - \nabla g_\gamma ({\bf y})) + \nabla^2 h({\bf y}) \cdot J_{\mathcal{D}_\gamma({\bf y})} \\
        &= 2(I_d \otimes {\bf y} + {\bf y} \otimes I_d) \cdot \mathcal{D}_\gamma({\bf y}) + ((\Vert {\bf y}\Vert^2 + 1) I_d + 2{\bf y}{\bf y}^{\top}) \cdot J_{\mathcal{D}_\gamma({\bf y})},
    \end{align*}
    where $\otimes$ represents Kronecker product. Since ${\bf x}=\nabla h({\bf y})$ and hence ${\bf y} = \nabla h^*({\bf x})$ according to $\nabla h^*=\nabla h^{-1}$, we have
$$       \nabla^2 \eta_\gamma({\bf x}) =  (t^* I_d)^2 \cdot \left\{( 2(I_d \otimes {\bf y} + {\bf y} \otimes I_d) \cdot \mathcal{D}_\gamma({\bf y}) + ((\Vert {\bf y}\Vert^2 + 1) I_d + 2{\bf y}{\bf y}^{\top}) \cdot J_{\mathcal{D}_\gamma({\bf y})} \right \}.
$$
Consequently, for all ${\bf d},{\bf y}\in \mathbb{R}^n_+$ in image manifold, we have $\left \langle \nabla^2 \eta_\gamma ({\bf y}) {\bf d} ,{\bf d} \right \rangle > 0$, which comes from the fact that $J_{\mathcal{D}_\gamma}$ is positive definite \citep{hurault2022proximal}.
We have now verified the strict convexity in the image manifold.
\end{proof}

\begin{remark}
    Note that the convexity of $\psi_\gamma \circ \nabla h^*$ can be guaranteed with commonly used kernel functions such as the Hellinger $h({\bf x}) = -\sqrt{1-{\bf x}^2}$ \citep{bauschke2017descent} and all polynomial kernel functions \citep{ding2023nonconvex}. These kernel functions cover a wide range of applications including non-negative matrix factorization, low-rank minimization, and phase retrieval.
\end{remark}

\subsubsection{Comparison with State-of-the-art methods}
Following the aforementioned analysis, our algorithm can be used to handle the phase retrieval problem with Poisson noise. In the experiment, we consider coded diffraction pattern (CDP) with $m=4$ intensity-only measurements \citep{candes2015phase}. 

\begin{table}[!t]
    \centering
    \setlength{\tabcolsep}{0.05in} 
    \resizebox{0.98\columnwidth}{!}{%
    \begin{tabular}{c|c|>{\centering\arraybackslash}p{1.9cm}|>{\centering\arraybackslash}p{1.8cm}|>{\centering\arraybackslash}p{1.8cm}|>{\centering\arraybackslash}p{1.8cm}|>{\centering\arraybackslash}p{1.8cm}|>{\centering\arraybackslash}p{1.8cm}|>{\centering\arraybackslash}p{1.8cm}|>{\centering\arraybackslash}p{1.8cm}}
    \hline
        \cellcolor[HTML]{EFEFEF}& \cellcolor[HTML]{EFEFEF}& \cellcolor[HTML]{EFEFEF} & \multicolumn{3}{c|}{\cellcolor[HTML]{EFEFEF}Traditional}   & \multicolumn{2}{c|}{\cellcolor[HTML]{EFEFEF}Supervised}  & \multicolumn{2}{c}{\cellcolor[HTML]{EFEFEF}Plug-and-Play} \\  
        \hhline{>{\arrayrulecolor[HTML]{EFEFEF}}->{\arrayrulecolor{black}}|>{\arrayrulecolor[HTML]{EFEFEF}}->{\arrayrulecolor{black}}|>{\arrayrulecolor[HTML]{EFEFEF}}->{\arrayrulecolor{black}}|-------|}
         \cellcolor[HTML]{EFEFEF}\multirowcell{-2}{ Noise Type} &\cellcolor[HTML]{EFEFEF} \multirowcell{-2}{Noise Level} & \cellcolor[HTML]{EFEFEF}\multirow{-2}{*}{\diagbox[height=2.\line]{\raisebox{0.04in}{\scriptsize Index~}}{\scriptsize \hspace{-0.1in}Method}} &\cellcolor[HTML]{EFEFEF} WF &\cellcolor[HTML]{EFEFEF} DOLPHIn &\cellcolor[HTML]{EFEFEF} AmpFlow &\cellcolor[HTML]{EFEFEF}  TFPnP &\cellcolor[HTML]{EFEFEF} TFPnP$^\ast$  &\cellcolor[HTML]{EFEFEF} prDeep &\cellcolor[HTML]{EFEFEF} Ours \\ \hline
        \multirowcell{9}{Gaussian}  & \multirowcell{3}{SNR = 10} & PSNR  & 18.57 & 24.81 & 17.52 & 28.79 & \underline{29.99} & 27.65 & \cellcolor[HTML]{D4FFD3}\tBF{30.47}  \\ 
        ~ & ~ & SSIM & 35.81 & 60.02 & 32.07 & 84.40 & \underline{87.27}  & 78.98 & \cellcolor[HTML]{D4FFD3}\tBF{87.86} \\ 
        ~ & ~ & Time & 0.88 & 8.47 & 1.45 & \underline{0.50} & \cellcolor[HTML]{D4FFD3}\tBF{0.04} & 10.33 & 0.74 \\ \cline{2-10}\hhline{~~~~~~~-~~}
        ~ & \multirowcell{3}{SNR = 15} & PSNR  & 24.85 & 27.59 & 22.77 & 30.54 & \underline{32.56}  & 29.67 & \cellcolor[HTML]{D4FFD3}\tBF{33.11} \\ 
        ~ & ~ & SSIM  & 60.06 & 73.05 & 52.22 & 87.90 & \underline{92.10} & 82.90 & \cellcolor[HTML]{D4FFD3}\tBF{92.71}   \\ 
        ~ & ~ & Time & 0.68 & 8.46 & 0.66 & \underline{0.38} & \cellcolor[HTML]{D4FFD3}\tBF{0.05} & 10.58 & 0.88 \\ \cline{2-10}\hhline{~~~~~~~-~~}
        ~ & \multirowcell{3}{SNR = 20} & PSNR  & 30.27 & 28.96 & 27.53 & 33.69 & \underline{35.18} & 32.53 & \cellcolor[HTML]{D4FFD3}\tBF{35.58} \\ 
        ~ & ~ & SSIM  & 78.62 & 79.43 & 69.88 & 93.60 & \underline{95.39}  & 89.80 & \cellcolor[HTML]{D4FFD3}\tBF{95.28}\\ 
        ~ & ~ & Time & 0.54 & 8.69 & 0.52 & \underline{0.14} & \cellcolor[HTML]{D4FFD3}\tBF{0.05} & 8.25 & 0.81  \\ \hline
        \multirowcell{9}{Poisson }  & \multirowcell{3}{$\alpha=9$} & PSNR  & 34.28 & 28.46 & 36.02 & 36.66 & \cellcolor[HTML]{D4FFD3}\tBF{40.55} & \underline{39.60} & 39.02  \\ 
        ~ & ~ & SSIM  & 88.94 & 76.88 & 91.85 & 95.86 & \cellcolor[HTML]{D4FFD3}\tBF{98.39} & 97.15 & \underline{97.73} \\ 
        ~ & ~ & Time & 0.53 & 7.91 & \cellcolor[HTML]{D4FFD3}\tBF{0.39} & \underline{0.15} & 0.40 & 9.76 & 1.14  \\ \cline{2-10}\hhline{~~~~~~~-~~}
        ~ & \multirowcell{3}{$\alpha = 27$} & PSNR & 24.34 & 23.26 & 25.40 & 29.97 & \cellcolor[HTML]{D4FFD3}\tBF{33.98} & \underline{33.46} & 32.75 \\ 
        ~ & ~ & SSIM & 57.41 & 52.57 & 61.52 & 86.29 & \cellcolor[HTML]{D4FFD3}\tBF{94.33} & \underline{92.98} & 92.67 \\ 
        ~ & ~ & Time & 0.66 & 7.84 & 0.57 & \cellcolor[HTML]{D4FFD3}\tBF{0.13} & \underline{0.15} & 8.37 & 0.73\\ \cline{2-10}\hhline{~~~~~~~-~~}
        ~ & \multirowcell{3}{$\alpha = 81$} & PSNR& 13.04 & 14.85 & 13.18 & 26.55 & \cellcolor[HTML]{D4FFD3}\tBF{28.25} & \underline{26.89} & 26.81  \\ 
        ~ & ~ & SSIM & 15.86 & 19.33 & 16.03 & 78.42 & \cellcolor[HTML]{D4FFD3}\tBF{83.41} & \underline{80.81} & 78.49\\ 
        ~ & ~ & Time & 1.43 & 8.05 & 1.94 & \underline{0.13} & \cellcolor[HTML]{D4FFD3}\tBF{0.07} & 5.54 & 0.63 \\ \hline
    \end{tabular}}
    \caption{Quantitative comparison of state-of-the-art phase retrieval algorithms with Average PSNR (dB), SSIM (\%), and interference time (second) on PrDeep12 dataset. Degradation with CDP measurement with $m=4$ and Gaussian noise levels SNR = \{10, 15, 20\}, Poisson noise levels $\alpha =\{9,27,81\}$. The best results are in \colorbox[HTML]{BBFFBB}{\tBF{green}} whereas the second-best results are underlined.}    \label{tab:quantitative_comparison_phase_retrieval}

\end{table}
The CDP measurement model uses a spatial light modulator (SLM) to spread a target's frequency information, hoping it will be easier to construct the image from the observed image.
More specifically, we set the number of measurements to four (\ie\  $\mathcal{K}=[(\mathcal{FD}_1)^{\top}, (\mathcal{FD}_2)^{\top}, \ldots (\mathcal{FD}_4)^{\top})]$). 
Then, we sample and reconstruct $12$ commonly used test images in the phase retrieval task, including $6$ ``natural'' and $6$ ``unnatural'' images used in \cite{metzler2018prdeep} and \cite{wei2022tfpnp}. In the following, we are comparing our proposed method with three classical approaches\footnote{Implementation of classical approaches WF, AmpFlow are tested through PhasePack (\url{https://github.com/tomgoldstein/phasepack-matlab}).}, which are WF \citep{candes2015phase}, AmplitudeFlow (abbrev. as AmpFlow) \citep{wang2017solving}; dictionary learning-based method DOLPHIn \citep{tillmann2016dolphin};
and two PnP approaches prDeep \citep{metzler2018prdeep} and TFPnP \citep{wei2022tfpnp}. As mentioned in \cite{wei2022tfpnp}, only a single policy network is trained for phase retrieval. However, we observed a significant decrease in the quantitative result when the training mask and the testing mask were mismatched, which is also reported by \cite{liu2023prista}. We retrained the TFPnP model to our testing mask (denoted by TFPnP$^\ast$) to reproduce the result reported in \cite{wei2022tfpnp}.

\begin{figure}[!t]
\subfigure[\scriptsize{Original}]{
    \zoomincludgraphic{0.235\textwidth}{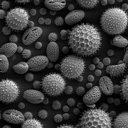}{0.34}{0.25}{0.62}{0.53}{1.5}{help_grid_off}{up_left}{line_connection_off}{3}{green}{1.5}{blue}
}\hspace{-0.34in}
\subfigure[\scriptsize{WF (27.34/85.47)}]{
    \zoomincludgraphic{0.235\textwidth}{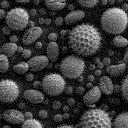}{0.34}{0.25}{0.62}{0.53}{1.5}{help_grid_off}{up_left}{line_connection_off}{3}{green}{1.5}{blue}
}\hspace{-0.34in}
\subfigure[\scriptsize{AmpFlow (25.52/80.03)}]{
	\zoomincludgraphic{0.235\textwidth}{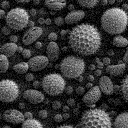}{0.34}{0.25}{0.62}{0.53}{1.5}{help_grid_off}{up_left}{line_connection_off}{3}{green}{1.5}{blue}
}\hspace{-0.34in}
\subfigure[\scriptsize{DOLPHIn (24.88/77.65)}]{
	\zoomincludgraphic{0.235\textwidth}{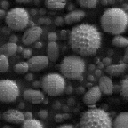}{0.34}{0.25}{0.62}{0.53}{1.5}{help_grid_off}{up_left}{line_connection_off}{3}{green}{1.5}{blue}
}
\subfigure[\scriptsize{TFPnP (27.26/85.14)}]{
    \zoomincludgraphic{0.235\textwidth}{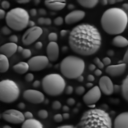}{0.34}{0.25}{0.62}{0.53}{1.5}{help_grid_off}{up_left}{line_connection_off}{3}{green}{1.5}{blue}
}\hspace{-0.34in}
\subfigure[\scriptsize{TFPnP$^*$ (30.77/93.90)}]{
    \zoomincludgraphic{0.235\textwidth}{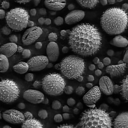}{0.34}{0.25}{0.62}{0.53}{1.5}{help_grid_off}{up_left}{line_connection_off}{3}{green}{1.5}{blue}
}\hspace{-0.34in}
\subfigure[\scriptsize{prDeep (28.39/88.81)}]{
    \zoomincludgraphic{0.235\textwidth}{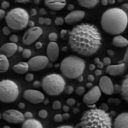}{0.34}{0.25}{0.62}{0.53}{1.5}{help_grid_off}{up_left}{line_connection_off}{3}{green}{1.5}{blue}
}\hspace{-0.34in}
\subfigure[\scriptsize{Ours (32.22/95.43)}]{
    \zoomincludgraphic{0.235\textwidth}{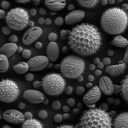}{0.34}{0.25}{0.62}{0.53}{1.5}{help_grid_off}{up_left}{line_connection_off}{3}{green}{1.5}{blue}
}
\caption{Reconstruction results (PSNR(dB)/SSIM(\%)) of $128\times128$ image from four noisy intensity-only CDP measurements (Gaussian SNR=15). Visualization comparison of our scheme and some state-of-the-art PR algorithms: (b)  WF \citep{candes2015phase}, (c) AmplitudeFlow \citep{wang2017solving}, (d) DOLPHIn \citep{tillmann2016dolphin}, (e) TFPnP \citep{wei2022tfpnp}, (f) ${\rm TFPnP}^\ast$, (g) prDeep \citep{metzler2018prdeep}, and (h) Our PnP-iBPDCA.}
\label{fig:pr_result} 
\end{figure}

Following the discussion of the phase retrieval in the beginning, both shot noise with $\omega \sim \mathcal{N}\left(0,\alpha^2 |\mathcal{K} {\bf }|^2\right)$ and additive white Gaussian noise $\omega \sim \mathcal{N}(0, 10^{-\frac{{\rm SNR}}{10}})$ are considered. Hence, we conduct the phase retrieval on Gaussian noise levels $\alpha =\{10, 15, 20\}$ and Poisson noise levels $\alpha = \{9, 27, 81\}$ and to validate the effectiveness of the proposed scheme on the experimental aspect. The average numerical results with PSNR, SSIM, and Time are listed in Table \ref{tab:quantitative_comparison_phase_retrieval}. For Gaussian noise, our method has the best visual result out of the state-of-the-art methods while maintaining interference time similar to supervised methods. However, a slight dent in the result is observed for Poisson noise. We have expected this outcome since our model emphasizes theoretical guarantee while following the model from \cite{metzler2018prdeep} means a model mismatching between the original noise model and the modeling.

\begin{figure}[!t]
\subfigure[\scriptsize{Original}]{
    \zoomincludgraphic{0.235\textwidth}{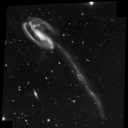}{0.22}{0.6}{0.42}{0.83}{2.2}{help_grid_off}{bottom_left}{line_connection_off}{3}{green}{1.5}{blue}
}\hspace{-0.34in}
\subfigure[\scriptsize{WF (23.38/33.11)}]{
    \zoomincludgraphic{0.235\textwidth}{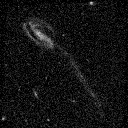}{0.22}{0.6}{0.42}{0.83}{2.2}{help_grid_off}{bottom_left}{line_connection_off}{3}{green}{1.5}{blue}
}\hspace{-0.34in}
\subfigure[\scriptsize{AmpFlow (23.36/34.26)}]{
    \zoomincludgraphic{0.235\textwidth}{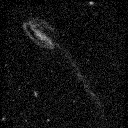}{0.22}{0.6}{0.42}{0.83}{2.2}{help_grid_off}{bottom_left}{line_connection_off}{3}{green}{1.5}{blue}
}\hspace{-0.34in}
\subfigure[\scriptsize{DOLPHIn (21.28/17.50)}]{
	\zoomincludgraphic{0.235\textwidth}{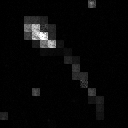}{0.22}{0.6}{0.42}{0.83}{2.2}{help_grid_off}{bottom_left}{line_connection_off}{3}{green}{1.5}{blue}
}
\subfigure[\scriptsize{TFPnP (31.88/83.35)}]{
    \zoomincludgraphic{0.235\textwidth}{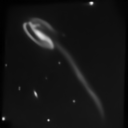}{0.22}{0.6}{0.42}{0.83}{2.2}{help_grid_off}{bottom_left}{line_connection_off}{3}{green}{1.5}{blue}
}\hspace{-0.34in}
\subfigure[\scriptsize{TFPnP$^*$ (36.89/91.73)}]{
    \zoomincludgraphic{0.235\textwidth}{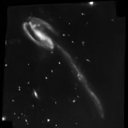}{0.22}{0.6}{0.42}{0.83}{2.2}{help_grid_off}{bottom_left}{line_connection_off}{3}{green}{1.5}{blue}
}\hspace{-0.34in}
\subfigure[\scriptsize{prDeep (35.45/90.35)}]{
    \zoomincludgraphic{0.235\textwidth}{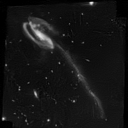}{0.22}{0.6}{0.42}{0.83}{2.2}{help_grid_off}{bottom_left}{line_connection_off}{3}{green}{1.5}{blue}
}\hspace{-0.34in}
\subfigure[\scriptsize{Ours (36.14/90.89)}]{
    \zoomincludgraphic{0.235\textwidth}{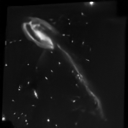}{0.22}{0.6}{0.42}{0.83}{2.2}{help_grid_off}{bottom_left}{line_connection_off}{3}{green}{1.5}{blue}
}
\caption{Reconstruction results (PSNR(dB)/SSIM(\%)) of $128\times128$ image from four noisy intensity-only CDP measurements (Poisson $\alpha=27$). Visualization comparison of our scheme and some state-of-the-art PR algorithms: (b)  WF \citep{candes2015phase}, (c) AmplitudeFlow \citep{wang2017solving}, (d) DOLPHIn \citep{tillmann2016dolphin}, (e) TFPnP \citep{wei2022tfpnp}, (f) ${\rm TFPnP}^\ast$, (g) prDeep \citep{metzler2018prdeep}, and (h) Our PnP-iBPDCA.}
\label{fig:pr_result_2} 
\end{figure}

Furthermore, we report the corresponding visual results in Figure \ref{fig:pr_result} and Figure \ref{fig:pr_result_2}. More specifically, the phase retrieval results under the degradation of four CDP measurements and Gaussian noise level $\mbox{SNR}=15$ are displayed in Figure \ref{fig:pr_result}. The traditional methods, like WF and AmpFlow, fail to remove the noise and the deep learning-based methods ignore the detailed information in the Pollen image. The oversmoothing also occurs in the results of TFPnP$^\ast$. Whereas our method unifies the traditional approach with a deep prior under the PnP framework and generates the best phase retrieval results. From the visualization performance, our methods remove Gaussian noise while preserving the intrinsic detailed structure, the superiority result is more pronounced in the zoomed-in part. 

For the phase retrieval with Poisson noise, not only the numerical results are reported but also the visual results with Poisson noise level $\alpha=27$ are presented in Figure \ref{fig:pr_result_2}. 
With medium-level Poisson noise, although the traditional methods recover the phase from the CDP measurement, they still fail to remove the noise. As to the deep-learning-based approaches, while the noise-free image is obtained, the typical oversmoothing happens. 
Our result is noise-free and better approaching the original phase in detail.

Overall, based on the phase retrieval results for both noise types, the proposed scheme achieves the best overall performance in terms of theoretical, numerical, and interference speed. Although our method exhibits a slight decrease in performance compared to the TFPnP and prDeep, their framework lacks theoretical convergence guarantees. Besides, prDeep takes the longest time to process an image. 
Furthermore, although TFPnP employs pre-trained denoisers for PnP, then they rely on reinforcement learning to adjust internal parameters. 
It takes around 4.5 hours to obtain TFPnP$^\ast$ for our testing mask for each noise type while the pretrained model (TFPnP) provided by the authors has significantly worse visual performance than its retrained counterpart (TFPnP$^\ast$).
End-to-end learning approaches are often time-consuming, worst-case performance on unfamiliar inputs deteriorates significantly \cite{chen2022learning}, and are prone to gradient explosion. In contrast, our method only requires minor parameter tuning to guarantee performance regardless of the inputs such as different mask numbers, noise type, and noise strength.

\section{Conclusions}
\label{sec:conclusions}

This paper explored an inertial difference-of-convex algorithm to minimize a difference-of-convex function with a weakly convex function. Our proposed method extends the existing DCA approach and introduces inertial techniques to accelerate convergence. The convergence of our proposed method is established using the Kurdyka-{\L}ojasiewicz property. By incorporating Plug-and-Play (PnP) with a gradient step denoiser, we leveraged the benefits of deep priors, further enhancing the performance of our algorithm in image restoration tasks. The convergence of this PnP variant is also guaranteed due to the weak convexity of the deep prior. 
We have also conducted extensive experiments on image restoration, evaluating the performance of our proposed algorithms in Rician noise removal and phase retrieval. Compared to state-of-the-art methods, our results were superior or comparable both visually and quantitatively, demonstrating the effectiveness of our method and its accelerated practical convergence.

For future research, we will consider variants of the proposed method, including different acceleration techniques such as dynamically adapting parameter choices based on noise level estimation at each step, and implementing backtracking line search with various stopping criteria to enhance descent. Additionally, we will explore generalizing our proposed Bregman denoiser to generic kernel functions.

\acks{We would like to express our gratitude to the authors of \cite{wu2022efficient,wei2023nonconvex} for graciously providing us with the source code. This work was supported by the National Natural Science Foundation of China grants 12471291, 12001286, and
the China Postdoctoral Science Foundation grants 2022M711672, NSFC/RGC N CUHK 415/19, ITF ITS/173/22FP,
RGC 14300219, 14302920, 14301121, and CUHK Direct Grant for Research.}



\appendix
\section{Preliminaries of Nonconvex Nonsmooth Optimization}
\label{app:preliminaries}

\subsection{Subdifferentials} 
\begin{definition}{\rm ({\bf Subdifferentials}) \citep{attouch2013convergence, bolte2014proximal}} \label{def:subdifferentials} 
For a proper and lower semicontinuous function  $f:\mathbb {R}^{n}\rightarrow (-\infty,+\infty]$,
\begin{itemize}
\item[(i)] given ${\bf x}\in {\rm dom}(f)$, the Fr\'{e}chet subdifferential of $f$ at ${\bf x}$, expressed as $\widehat{\partial}f({\bf x})$, is the set of all vectors ${\bf u}\in \mathbb{R}^n$ satisfying\vspace{-0.05in}
$$\liminf_{{\bf y}\neq {\bf x}, {\bf y}\rightarrow {\bf x}}\frac{f({\bf y})-f({\bf x})-\langle {\bf u},{\bf y}-{\bf x}\rangle}{\|{\bf y}-{\bf x}\|}\geq0,\vspace{-0.05in}$$
and we set $\widehat{\partial}f({\bf x}) = \emptyset$ when ${\bf x}\notin {\rm dom}(f)$.
\vspace{0.2cm}
\item[(ii)] (limiting-)subdifferential of $f$ at ${\bf x}$, written by $\partial f({\bf x})$, is defined by 
\begin{equation}\label{pf}
\partial f({\bf x}):=\{{\bf u}\in\mathbb{R}^n\; | \; \exists ~ {\bf x}^k\rightarrow {\bf x}, ~{\rm s.t.}~f({\bf x}^k)\rightarrow f({\bf x})
 ~{\rm and}~ \widehat{\partial}f({\bf x}^k) \ni {\bf u}^k \rightarrow {\bf u} \}.  \end{equation}

\item[(iii)] a point ${\bf x}^*$ is called (limiting-)critical point or stationary point of $f$ if it satisfies $0\in\partial f({\bf x}^*)$, and the set of critical points of $f$ is denoted by ${\rm crit} f$.
\end{itemize}
\end{definition}

Note that Definition~\ref{def:subdifferentials} implies that the property $\widehat{\partial}f({\bf x})\subseteq \partial f({\bf x})$ immediately holds, and $\widehat{\partial}f({\bf x})$ is closed and convex, meanwhile $\partial f({\bf x})$ is closed \cite[Theorem 8.6]{rockafellar2009variational}. Also, the subdifferential \eqref{pf} reduces to the gradient of $f$ denoted by $\nabla f$ if $f$ is continuously differentiable. Moreover, as mentioned in \cite{rockafellar2009variational}, if $g$ is a continuously differentiable function, it holds that $\partial(f+g)=\partial f+\nabla g$.

\subsection{Kurdyka-{\L}ojasiewicz (KL) Property} 

\begin{definition}{\rm ({\bf KL property and KL function})}  \citep{attouch2010proximal, bolte2014proximal}) \label{def:KL_property}
Let $f:\mathbb{R}^{n}\rightarrow (-\infty,+\infty]$ be a proper and lower semicontinuous function.
\begin{itemize}
\item[$(a)$]$f$ is said to have KL property at ${\bf x}^*\in{\rm dom}(\partial f)$ if there exist $\tau\in(0,+\infty]$, a neighborhood $U$ of ${\bf x}^*$ and a continuous and concave function $\varrho:[0,\tau)\rightarrow \mathbb{R}_+$ such that
\begin{itemize}
\item[\rm(i)] $\varrho(0)=0$ and $\varrho$ is continuously differentiable on $(0,\tau)$ with $\varrho'>0$;

\item[\rm(ii)] $\forall {\bf x}\in U\cap\{{\bf z} \in \mathbb{R}^n\; | \; f({\bf x}^*) < f({\bf z}) < f({\bf x}^*)+\tau\}$, the following KL inequality holds: 
\begin{equation} \nonumber
\varrho'(f({\bf x})-f({\bf x}^*)) \cdot {\rm dist}(0,\partial f({\bf x})) \geq 1. 
\end{equation}
\end{itemize}

\item[$(b)$] If $f$ satisfies the KL property at each point of dom$(\partial f)$, then $f$ is called a KL function.
\end{itemize}
\end{definition}

Let $\Phi_\tau$ denote the set of function $\varrho$ that satisfies the condition in Definition~\ref{def:KL_property}(a). Then, we can establish a uniformized KL property established in \cite{bolte2014proximal}.

\begin{lemma}\label{thm:uniformized_KL} {\rm ({\bf Uniformized KL property}) \citep{bolte2014proximal}} 
    Let  $f:\mathbb{R}^{n}\rightarrow(-\infty,+\infty]$ be a proper and lower semicontinuous function and $\Gamma$ be a compact set. Assume that $f$ is a constant on $\Gamma$ and satisfies the KL property at each point of $\Gamma$. Then, there exist $\vartheta>0,~\tau>0$ and $\varrho\in \Phi_{\tau}$ such that  
    \begin{equation} \nonumber
        \varrho'(f({\bf x})-f(\bar {\bf x})) \cdot {\rm dist}(0,\partial f({\bf x}))\geq 1, 
    \end{equation}
     for all $\bar {\bf x}\in \Gamma$ and each ${\bf x}$ satisfying
    ${\rm dist}({\bf x},\Gamma)<\vartheta$ and $f(\bar {\bf x}) < f({\bf x}) < f(\bar {\bf x})+\tau.$
\end{lemma}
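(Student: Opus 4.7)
The plan is to exploit compactness of $\Gamma$ to reduce the pointwise KL property to a uniform one, using a finite-cover argument and then aggregating the local desingularizing functions.

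First, since $f$ is constant on $\Gamma$, denote this common value by $\zeta := f(\bar{\bf x})$ for $\bar{\bf x} \in \Gamma$. At each point $\bar{\bf x} \in \Gamma$, the KL property of $f$ furnishes an open neighborhood $U_{\bar{\bf x}}$, a constant $\tau_{\bar{\bf x}} > 0$, and a function $\varrho_{\bar{\bf x}} \in \Phi_{\tau_{\bar{\bf x}}}$ such that
$$\varrho'_{\bar{\bf x}}(f({\bf x}) - \zeta)\cdot {\rm dist}(0,\partial f({\bf x})) \geq 1$$
for every ${\bf x} \in U_{\bar{\bf x}} \cap \{{\bf z}\mid \zeta < f({\bf z}) < \zeta + \tau_{\bar{\bf x}}\}$. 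The family $\{U_{\bar{\bf x}}\}_{\bar{\bf x}\in \Gamma}$ is an open cover of the compact set $\Gamma$, so I can extract a finite subcover $U_{\bar{\bf x}_1}, \dots, U_{\bar{\bf x}_p}$.

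Next, I would choose $\vartheta > 0$ small enough that the open $\vartheta$-tube $\{{\bf x} : {\rm dist}({\bf x},\Gamma) < \vartheta\}$ lies inside $\bigcup_{i=1}^{p} U_{\bar{\bf x}_i}$. This is possible because $\Gamma$ is compact and contained in the open set $\bigcup_i U_{\bar{\bf x}_i}$; a standard compactness/Lebesgue number argument yields such a $\vartheta$. Then set $\tau := \min_{1\le i \le p}\tau_{\bar{\bf x}_i} > 0$, and define
$$\varrho(s) := \sum_{i=1}^{p}\varrho_{\bar{\bf x}_i}(s), \qquad s\in[0,\tau).$$
I would then verify that $\varrho \in \Phi_\tau$: $\varrho(0)=0$ since each summand vanishes at $0$; $\varrho$ is concave as a sum of concave functions; it is continuous on $[0,\tau)$ and continuously differentiable on $(0,\tau)$ with $\varrho'(s) = \sum_i \varrho'_{\bar{\bf x}_i}(s) > 0$, inheriting these properties from each $\varrho_{\bar{\bf x}_i}$.

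Finally, for the uniform KL inequality: take any $\bar{\bf x} \in \Gamma$ and any ${\bf x}$ with ${\rm dist}({\bf x},\Gamma) < \vartheta$ and $\zeta < f({\bf x}) < \zeta + \tau$. By construction, ${\bf x}$ lies in some $U_{\bar{\bf x}_i}$, and since $\tau \leq \tau_{\bar{\bf x}_i}$, the local KL inequality applies, giving $\varrho'_{\bar{\bf x}_i}(f({\bf x}) - \zeta)\cdot {\rm dist}(0,\partial f({\bf x})) \geq 1$. Because $\varrho'(s) \geq \varrho'_{\bar{\bf x}_i}(s)$ by non-negativity of the other summands' derivatives, the conclusion follows at once.

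The only genuine delicacy is the construction of $\vartheta$; one must be careful that the tube $\{{\rm dist}(\cdot,\Gamma) < \vartheta\}$ is actually swallowed by the finite union. If the function values on $\Gamma$ were not constant this whole approach would be much more subtle, but thanks to $f\equiv \zeta$ on $\Gamma$ the KL condition $\zeta < f({\bf x}) < \zeta + \tau$ is independent of the choice of $\bar{\bf x}\in\Gamma$, and the aggregation of the desingularizing functions by summation is both concave-preserving and lower bound-preserving, making everything go through cleanly.
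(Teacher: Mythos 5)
Your proof is correct. The paper does not supply a proof of this lemma at all---it is quoted verbatim from Bolte, Sabach and Teboulle (2014)---and your argument is exactly the standard one from Lemma 6 of that reference: extract a finite subcover of the compact set $\Gamma$, shrink to a $\vartheta$-tube contained in the union, take $\tau$ as the minimum of the local constants, and sum the local desingularizing functions, using that the sum of the derivatives dominates each individual derivative. All the steps, including the compactness argument producing $\vartheta$ and the verification that the sum stays in $\Phi_\tau$, are sound.
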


\section{Missing Proofs in Section~\ref{sec:alg}} \label{app:proof}

\subsection{Proof of Lemma~\ref{lem:descending_property_auxiliary_function} (Sufficient decrease property)}\label{proof:descending_property_auxiliary_function}

By first-order optimality condition for Subproblem~\eqref{subproblem:main_subproblem_of_iBPDCA}, we have
\begin{equation} \label{eq:first_order_optimality_descend}
        0\in\partial g({\bf x}^{k+1}) + \nabla f_1({\bf y}^k) -\xi^k +\frac{1}{\lambda} \left(\nabla h({\bf x}^{k+1}) - \nabla h({\bf y}^k)\right).
\end{equation}
Combining $\eta$-weak convexity of $g$ and \eqref{eq:first_order_optimality_descend}, we obtain
\begin{align*}
    g({\bf x}^k) - g({\bf x}^{k+1}) &\geq \left \langle - \nabla f_1({\bf y}^k) +\xi^k -\frac{1}{\lambda} \left(\nabla h({\bf x}^{k+1})  - \nabla h({\bf y}^k)\right), {\bf x}^k - {\bf x}^{k+1} \right \rangle \\ 
    &\qquad -
    \frac{\eta}{2} \Vert {\bf x}^k - {\bf x}^{k+1} \Vert ^2.
\end{align*}
Then, it follows from Lemma~\ref{lem:three-point-identity} (Three-point identity) that
\begin{align*}
    \left \langle \nabla h({\bf x}^{k+1}) - \nabla h({\bf y}^k), {\bf x}^k - {\bf x}^{k+1} \right \rangle = D_h({\bf x}^k,{\bf y}^k) - D_h({\bf x}^k,{\bf x}^{k+1}) - D_h({\bf x}^{k+1},{\bf y}^k),
\end{align*}
which leads to
\begin{equation} \label{eq:g_inequality_descend}
    \begin{aligned}
    g({\bf x}^k) - g({\bf x}^{k+1}) &\geq \left \langle - \nabla f_1({\bf y}^k) +\xi^k , {\bf x}^k - {\bf x}^{k+1} \right \rangle -\frac{\eta}{2} \Vert {\bf x}^k - {\bf x}^{k+1} \Vert ^2\\
    & \qquad - \frac{1}{\lambda} \left(D_h({\bf x}^k,{\bf y}^k) - D_h({\bf x}^k,{\bf x}^{k+1}) - D_h({\bf x}^{k+1},{\bf y}^k)\right).
    \end{aligned}
\end{equation}
By the definition of subgradient of $f_2$ (\ie\ $f_2({\bf x}^{k+1}) -f_2({\bf x}^k) \geq \left \langle \xi^k ,{\bf x}^{k+1} -{\bf x}^k\right \rangle$, where $\xi^k \in \partial f_2({\bf x}^k)$), we can reformualte \eqref{eq:g_inequality_descend} as 
\begin{equation} \label{eq:-f2+g_descend}
\begin{aligned}
    & - f_2({\bf x}^k) + g({\bf x}^k) - \left( - f_2({\bf x}^{k+1}) + g({\bf x}^{k+1})\right) \\
     &\geq -\left \langle  \nabla f_1({\bf y}^k), {\bf x}^k - {\bf x}^{k+1} \right \rangle -\frac{\eta}{2} \Vert {\bf x}^k - {\bf x}^{k+1} \Vert ^2 \\
     & \qquad - \frac{1}{\lambda} \left(D_h({\bf x}^k,{\bf y}^k) - D_h({\bf x}^k,{\bf x}^{k+1}) - D_h({\bf x}^{k+1},{\bf y}^k)\right).
\end{aligned}
\end{equation}
On the other hand, it follows from the convexity of $f_1$ and Definition~\ref{lem:full_extended_descent_lemma} (Restricted $L$-smooth adaptable on $\mathcal{X}$) that
\begin{equation} \label{eq:f_1_descend}
\begin{aligned} 
    &f_1({\bf x}^k) - f_1({\bf x}^{k+1}) - \left \langle \nabla f_1({\bf y}^k), {\bf x}^k - {\bf x}^{k+1} \right \rangle \\
    &\quad = f_1({\bf x}^k) -f_1({\bf y}^k) - \left \langle \nabla f_1({\bf y}^k), {\bf x}^k - {\bf y}^k\right \rangle -f_1({\bf x}^{k+1}) +f_1({\bf y}^k) +\left \langle \nabla f_1({\bf y}^k), {\bf x}^{k+1} -{\bf y}^k \right \rangle \\
    &\quad \geq -LD_h({\bf x}^{k+1},{\bf y}^k).
\end{aligned}
\end{equation}
Combining \eqref{eq:-f2+g_descend} and \eqref{eq:f_1_descend}, we obtain
\begin{equation} \label{eq:obj_step_descend}
\begin{aligned}
    &\Psi({\bf x}^k) - \Psi ({\bf x}^{k+1}) \\
    &\quad \geq -LD_h({\bf x}^{k+1},{\bf y}^k) -\frac{\eta}{2} \Vert {\bf x}^k - {\bf x}^{k+1} \Vert ^2 - \frac{1}{\lambda}\left(D_h({\bf x}^k,{\bf y}^k) - D_h({\bf x}^k,{\bf x}^{k+1}) - D_h({\bf x}^{k+1},{\bf y}^k)\right) \\
    &\quad \geq \frac{1}{\lambda} D_h({\bf x}^k,{\bf x}^{k+1}) + \left(\frac{1}{\lambda} -L \right) D_h({\bf x}^{k+1},{\bf y}^k) -\frac{1}{\lambda} D_h({\bf x}^k,{\bf y}^k) -\frac{\eta}{2} \Vert {\bf x}^k - {\bf x}^{k+1} \Vert ^2.
\end{aligned}
\end{equation}
Involving Definition~\ref{def:breg_dist} and the fact that $h$ is $\kappa$-strongly convex, we can deduce an lower bound for $-\Vert {\bf x}^k - {\bf x}^{k+1} \Vert^2$, which is
\begin{equation} \label{eq:Dh_descend}
\begin{aligned}
    - D_h({\bf x}^k,{\bf x}^{k+1}) &= -\left(h({\bf x}^k) - h({\bf x}^{k+1}) -\left \langle \nabla h({\bf x}^{k+1}) ,{\bf x}^k - {\bf x}^{k+1} \right \rangle \right)
    \leq -\frac{\kappa}{2} \Vert {\bf x}^k -{\bf x}^{k+1} \Vert^2.
\end{aligned}
\end{equation}
Plugging \eqref{eq:Dh_descend} into \eqref{eq:obj_step_descend}, we have
\begin{align*}
    \Psi({\bf x}^k) - \Psi ({\bf x}^{k+1}) \geq \left(\frac{1}{\lambda} - \frac{\eta}{\kappa}\right) D_h({\bf x}^k,{\bf x}^{k+1}) + \left(\frac{1}{\lambda}-L \right) D_h({\bf x}^{k+1},{\bf y}^k) -\frac{1}{\lambda} D_h({\bf x}^k,{\bf y}^k).
\end{align*}
Involving the definition of $H_\delta$ in \eqref{eq:definition_Hk} , we have
\begin{align*}
    H_\delta ({\bf x}^k,{\bf x}^{k-1}) &\geq H_\delta({\bf x}^{k+1},{\bf x}^k) + \left(\frac{1}{\lambda} -\frac{\eta}{\kappa}-\delta \right)D_h({\bf x}^k,{\bf x}^{k+1}) \\
    &\quad + \left(\frac{1}{\lambda} -L \right) D_h({\bf x}^{k+1},{\bf y}^k) -\frac{1}{\lambda} D_h({\bf x}^k,{\bf y}^k) + \delta D_h({\bf x}^{k-1},{\bf x}^k).
\end{align*}        
Utilizing \eqref{bregman_distance_condition_on_extrapolation}, we further obtain
\begin{align*}
    H_\delta ({\bf x}^k,{\bf x}^{k-1}) &\geq H_\delta({\bf x}^{k+1},{\bf x}^k) + \left(\frac{1}{\lambda} -\frac{\eta}{\kappa}-\delta\right)D_h({\bf x}^k,{\bf x}^{k+1})+ \left(\frac{1}{\lambda} -L \right) D_h({\bf x}^{k+1},{\bf y}^k) \\
    &\quad + \epsilon D_h({\bf x}^{k-1},{\bf x}^k).
\end{align*}
Since $\frac{1}{\lambda} >  \max \left\{\delta+\frac{\eta}{\kappa}, L \right \}$ and $\epsilon>0$, $\{H_\delta\}_{k=0}^\infty $ is non-increasing. This completes the proof.

\subsection{Proof of Proposition~\ref{prop:convergence_property_of_Dh}} \label{proof:convergence_property_of_Dh}
(i) Rearranging \eqref{inequality_for_auxiliary_function}, we have
    \begin{align*}
        H_\delta ({\bf x}^k,{\bf x}^{k-1})- H_\delta({\bf x}^{k+1},{\bf x}^k) &\geq \left[\frac{1}{\lambda} - \left(\frac{\eta}{\kappa}+\delta \right)\right]D_h({\bf x}^k,{\bf x}^{k+1}) \\
        &\qquad+ \left(\frac{1}{\lambda} -L \right) D_h({\bf x}^{k+1},{\bf y}^k)
        + \epsilon D_h({\bf x}^{k-1},{\bf x}^k) \\
        &\geq \left[\frac{1}{\lambda} - \left(\frac{\eta}{\kappa}+\delta \right)\right] D_h({\bf x}^k,{\bf x}^{k+1}) + \epsilon D_h({\bf x}^{k-1},{\bf x}^k),
    \end{align*}
    where the last inequality comes from 
    $\left( \frac{1}{\lambda} - L\right) D_h({\bf x}^{k+1}, {\bf y}^k)\geq 0$. Multiplying both sides by $\lambda$, and since $\left(1 -\lambda \left(\frac{\eta}{\kappa}+\delta)\right)D_h({\bf x}^k,{\bf x}^{k+1}\right) \geq 0$, we have
    \begin{equation}\label{inequ_}
    \begin{aligned}
        \lambda[H_\delta ({\bf x}^k,{\bf x}^{k-1})- H_\delta({\bf x}^{k+1},{\bf x}^k)] &\geq \left[1 -\lambda \left(\frac{\eta}{\kappa}+\delta\right)\right]D_h({\bf x}^k,{\bf x}^{k+1}) + \epsilon \lambda D_h({\bf x}^{k-1},{\bf x}^k)\\
        &\geq \epsilon \lambda  D_h({\bf x}^{k-1},{\bf x}^k), 
    \end{aligned}
    \end{equation}
    %
    From Equation~\eqref{v(p)}, we have $\Psi^* > -\infty$. Summing \eqref{inequ_} from $k=0$ to $n$, we have
    \begin{equation}\label{ineqDh}
        \begin{aligned}
        \epsilon \lambda \sum_{k=0}^n  D_h({\bf x}^{k-1},{\bf x}^k) &\leq \lambda \sum_{k=0}^n \left(H_\delta ({\bf x}^k,{\bf x}^{k-1})- H_\delta({\bf x}^{k+1},{\bf x}^k)\right). \\
        \text{Dividing both sides by $\epsilon\lambda,$}\\
        \sum_{k=0}^n  D_h({\bf x}^{k-1},{\bf x}^k) &\leq \frac{1}{\epsilon} \left[\Psi({\bf x}^0) +D_h({\bf x}^{-1},{\bf x}^0)- \left(\Psi({\bf x}^n)+D_h({\bf x}^n,{\bf x}^{n+1}) \right)\right]  \\
        &\leq \frac{1}{\epsilon} (\Psi({\bf x}^0) - \Psi({\bf x}^n)) 
        \leq \frac{1}{\epsilon} (\Psi({\bf x}^0) - \Psi^*),
    \end{aligned}
    \end{equation}
    where the second inequality arises from the initialization condition ${\bf x}^{-1}={\bf x}^0 \in {\rm intdom}(h)$, implying $D_h({\bf x}^{-1},{\bf x}^0)=0$, and the convexity of function $h$ ensures $D_h({\bf x}^n,{\bf x}^{n+1})\geq0,\forall n \in \mathbb{N}$. The last inequality follows from Definition~\ref{v(p)} that $ \Psi^* = \inf\{\Psi({\bf x}) | {\bf x} \in \mathcal{X}\} \leq \Psi({\bf x}^n), \forall n \in\mathbb{N}$.

   With Assumption~\ref{asm:proximal_mapping_subset_C}, we have ${\bf x}^{n+1} \in {\rm intdom}(h)$, leads us to ${\bf x}^{k} \in {\rm intdom}(h), \forall k \in \mathbb{N}$ by induction. We now can take the limit as $n \rightarrow \infty$, and establish the first part of the assertion. The second part of the assertion is as follows.

(ii) Following from \eqref{ineqDh}, we have 
\[n \min_{1\leq k \leq n}D_h({\bf x}^{k-1},{\bf x}^k)\leq \sum^n_{k=1} D_h({\bf x}^{k-1},{\bf x}^k) \leq \frac{1}{\epsilon} \left(\Psi({\bf x}^0) - \Psi^* \right) ,\]
dividing both sides by $n$ yields the desired outcome. This completes the proof.

\subsection{Proof of Theorem~\ref{thm:subsquential_convergence} ({\bf Subsequential convergence of iBPDCA})} \label{proof:thm:subsquential_convergence}
(i) From Proposition~\ref{prop:convergence_property_of_Dh}, the sequence $\{H_\delta({\bf x}^k,{\bf x}^{k-1})\}^\infty_{k=0}$ is non-increasing. Consequently, $H_\delta({\bf x}^k,{\bf x}^{k-1})) \leq H_\delta({\bf x}^0,{\bf x}^{-1}), \forall k \in \mathbb{N}$. Also, $D_h({\bf x}^{-1}, {\bf x}^0)=0$ since 
we set ${\bf x}^{-1}={\bf x}^0$ in the proposed Algorithm~\ref{alg:iBPDCA}. We can then prove the boundedness of objective function $\Psi({\bf x}^k)$ accordingly:
\[\Psi({\bf x}^k)\leq \Psi({\bf x}^k) + \delta D_h({\bf x}^{k-1},{\bf x}^k) = H_\delta({\bf x}^k,{\bf x}^{k-1}) \leq H_\delta({\bf x}^0,{\bf x}^{-1}) =\Psi({\bf x}^0).\]
The boundedness of $\{{\bf x}^k\}^\infty_{k=0}$ automatically fulfills due to the level boundedness of $\Psi$ by Assumption~\ref{asm:assumption_on_h}(iv).

(ii) Since $f_2$ is convex on $\mathbb{R}^n$, hence continuous.  By the boundedness of $\{{\bf x}^k\}^\infty_{k=0}$ from (i), $\{\xi^k\}^\infty_{k=0}$ is bounded as well.

(iii) Since $h$ is convex, we have $D_h({\bf x},{\bf y}) \geq 0, \forall {\bf x} \in  {\rm dom}(h), ~ {\bf y} \in  {\rm int}{\rm dom}(h)$. Taking advantage of $\kappa$-strong convexity of the function $h$ and $\frac{1}{\lambda} >  \max \left \{\delta + \frac{\eta}{\kappa}, L \right\}$. We obtain the following by rearranging \eqref{inequality_for_auxiliary_function} in Lemma~\ref{lem:descending_property_auxiliary_function} ({Sufficient decrease property of $H_\delta$}), we have 
\begin{equation} \label{eq:H_delta_relation_to_l2}
    \begin{aligned}
    H_\delta ({\bf x}^k,{\bf x}^{k-1}) -  H_\delta({\bf x}^{k+1},{\bf x}^k) &\geq \left(\frac{1}{\lambda} -\frac{\eta}{\kappa}-\delta \right)D_h({\bf x}^k,{\bf x}^{k+1}) \\
    &\quad + \left(\frac{1}{\lambda} -L\right) D_h({\bf x}^{k+1},{\bf y}^k) + \epsilon D_h({\bf x}^{k-1},{\bf x}^k) \\
    &\geq \left(\frac{1}{\lambda} -L\right) D_h({\bf x}^{k+1},{\bf y}^k) \\
    &\geq \frac{\kappa}{2} \left(\frac{1}{\lambda} -L\right)  \Vert {\bf x}^{k+1} -{\bf y}^k \Vert^2 \\
    &\geq \frac{\kappa}{2}\left(\frac{1}{\lambda} -L \right)\left(\Vert {\bf x}^{k+1} - {\bf x}^k\Vert^2 - \beta_k^2\Vert {\bf x}^k -{\bf x}^{k-1}\Vert^2\right),
    \end{aligned}
\end{equation}  
where the last equality comes from extrapolation step \eqref{extrapolation_step} and the reverse triangle inequality.

By summing \eqref{eq:H_delta_relation_to_l2} from $k=0$ to $\infty$, we obtain
\begin{align*}
    &\quad \frac{\kappa}{2}\left(\frac{1}{\lambda} -L\right) \sum_{k=0}^\infty (1-\beta_{k+1}^2)\Vert {\bf x}^{k+1} - {\bf x}^k\Vert^2 - \beta_0^2 \Vert {\bf x}^0 - {\bf x}^{-1}\Vert^2\\
    & =\frac{\kappa}{2} \left(\frac{1}{\lambda} -L \right) \sum_{k=0}^\infty (1-\beta_{k+1}^2)\Vert {\bf x}^{k+1} - {\bf x}^k\Vert^2 \\
    &\leq H_\delta({\bf x}^0,{\bf x}^{-1}) - \liminf_{n \to \infty} H_\delta({\bf x}^{n+1},{\bf x}^{n})\\
    &=\Psi({\bf x}^0) - \liminf_{n \to \infty} \left(\Psi({\bf x}^{n+1}) +\delta D_h({\bf x}^n,{\bf x}^{n+1}) \right)\\
    &\leq \Psi({\bf x}^0) - \Psi^*  < \infty,
\end{align*}
where the last inequality comes from Definition~\ref{v(p)}.
This shows that $\lim_{k\rightarrow\infty}\Vert {\bf x}^{k+1}-{\bf x}^k \Vert =0$ since $\frac{1}{\lambda} >  \max\{\delta+\frac{\eta}{\kappa}, L\}$ and $1-\beta_{k+1}^2 >0$.

(iv) Let ${\bf x}^*$ be an accumulation point of $\left\{{\bf x}^k \right\}_{k=0}^\infty$, and $\left\{{\bf x}^{k_j}\right\}_{j=0}^\infty$ be its subsequence of $\left\{{\bf x}^k\right\}_{k=0}^\infty$ such that $\lim_{j\rightarrow\infty}{\bf x}^{k_j}={\bf x}^*$. By the first-order optimality condition of Subproblem~\eqref{subproblem:main_subproblem_of_iBPDCA}, we have 
\begin{align*}
    0 &\in \partial g({\bf x}^{k_j+1}) + \nabla f_1({\bf y}^{k_j}) -\xi^{k_j} +\frac{1}{\lambda} \left(\nabla h({\bf x}^{k_j+1}) - \nabla h({\bf y}^{k_j}) \right).
\end{align*}
Rearranging the above inequality and adding $\nabla f_1({\bf x}^{k_j+1})$ on both sides, we obtain
\begin{equation} \label{thm:subsquential_convergence_subproblem}
    \begin{aligned}
        &\nabla f_1({\bf x}^{k_j+1}) -\nabla f_1({\bf y}^{k_j}) +\xi^{k_j} + \frac{1}{\lambda} \left( \nabla h({\bf y}^{k_j}) - \nabla h({\bf x}^{k_j+1})\right) \\
        &\qquad \in \partial g({\bf x}^{k_j+1}) + \nabla f_1({\bf x}^{k_j+1}).
    \end{aligned}
\end{equation}

As the sequence $\{{\bf x}^k\}^\infty_{k=0}$ is bounded according to (i), it follows that its subsequence $\{{\bf x}^{k_j}\}^\infty_{j=0}$ is also bounded. Furthermore, considering the Lipschitzness of $\nabla f_1$ and $\nabla h$ as stated in Assumption~\ref{asm:assumption_on_h}, we can conclude that there exists a constant $C_0 > 0$ such that
\begin{align*}
    \left\Vert \nabla f_1({\bf x}^{k_j+1}) -\nabla f_1({\bf y}^{k_j})  + \frac{1}{\lambda} \left(\nabla h({\bf y}^{k_j}) - \nabla h({\bf x}^{k_j+1}) \right) \right\Vert \leq C_0 \left\Vert {\bf x}^{k_j+1} -{\bf y}^{k_j}\right\Vert.
\end{align*}
As $j\rightarrow \infty$, we can to establish that $\Vert {\bf x}^{k_j+1} - {\bf x}^{k_j} \Vert \rightarrow 0$, and $\Vert {\bf x}^{k_j} - {\bf x}^{k_j-1} \Vert \rightarrow 0$ from (iii). Then, we have $\Vert {\bf x}^{k_j+1} -{\bf y}^{k_j}\Vert \leq \Vert {\bf x}^{k_j+1} - {\bf x}^{k_j} \Vert + \beta_k^2 \Vert {\bf x}^{k_j} - {\bf x}^{k_j-1} \Vert \rightarrow 0$ with \eqref{extrapolation_step} and triangle inequality. Hence
\begin{equation} \label{limiting_subproblem}
    \nabla f_1({\bf x}^{k_j+1}) -\nabla f_1({\bf y}^{k_j})  + \frac{1}{\lambda} \left(\nabla h({\bf y}^{k_j}) - \nabla h({\bf x}^{k_j+1}) \right)  \rightarrow 0.
\end{equation}

Similarly, from (ii), we know that $\{{\bf x}^{k_j}\}$ and $\{\xi^{k_j}\}$ are bounded. 
Without loss of generality we can assume that $\lim_{j\rightarrow \infty} \xi^{k_j} = \xi^*$ exists, which belongs to $\partial f_2({\bf x}^*)$ because of the closedness of $\partial f_2$. Taking the limit as $j \rightarrow \infty$ to \eqref{thm:subsquential_convergence_subproblem} and utilizing \eqref{limiting_subproblem},  with the continuity of $g$ and $\nabla f_1$, we obtain
$    \xi^* \in \partial g({\bf x}^*) + \nabla f_1({\bf x}^*)$
and 
$    0 \in \partial g({\bf x}^*) + \nabla f_1({\bf x}^*) -\partial f_2({\bf x}^*).$
Therefore, ${\bf x}^*$ is a limiting-critical point of Problem \eqref{eq:DCA_model}.
This completes the proof. 

\subsection{Proof of Proposition~\ref{prop:proporty_accumulation_point}}\label{proof:proporty_accumulation_point}
(i) By Proposition~\ref{prop:convergence_property_of_Dh}, we have $\lim_{k\rightarrow\infty}D_h({\bf x}^{k-1},{\bf x}^k)=0$. With Equation~\eqref{v(p)} and Lemma~\ref{lem:descending_property_auxiliary_function} (Sufficient decrease property of $H_\delta$), we know that the sequence $\{H_\delta({\bf x}^k,{\bf x}^{k-1})\}^\infty_{k=0}$ is bounded from below and non-increasing as well. Therefore, we have
\begin{align*}
\zeta := \lim_{k\rightarrow \infty} H_\delta({\bf x}^k,{\bf x}^{k-1})= \lim_{k\rightarrow \infty}  \Psi({\bf x}^k)+ D_h({\bf x}^{k-1},{\bf x}^k)=\lim_{k\rightarrow\infty}\Psi({\bf x}^k).
\end{align*}
Hence, $\zeta : =  \lim_{k\rightarrow \infty}H_\delta({\bf x}^k,{\bf x}^{k-1}) = \lim_{k\rightarrow\infty} \Psi({\bf x}^k) $ exists.

(ii) From Theorem~\ref{thm:subsquential_convergence} (Subsequential convergence of iBPDCA), we have $\emptyset \neq \Omega \subseteq {\rm crit} \Psi$, where ${\rm crit} \Psi$ is the set of critical points of $\Psi$. Take any ${\bf x}^* \in \Omega$, by the definition of accumulation point, there exists a convergence subsequence $\left\{{\bf x}^{k_j}\right\}_{j=0}^\infty$ such that $\lim_{j\rightarrow\infty}{\bf x}^{k_j}={\bf x}^*$. From the first-order optimality condition of Subproblem~\eqref{subproblem:main_subproblem_of_iBPDCA}, we have 
\begin{align*}
    &g({\bf x}^{k+1} ) +\left \langle \nabla f_1({\bf y}^k)-\xi^k, {\bf x}^{k+1} -{\bf y}^k \right \rangle +\frac{1}{\lambda} D_h({\bf x}^{k+1} ,{\bf y}^k) \\
    &\qquad \leq g({\bf x}^*) +\left \langle \nabla f_1({\bf y}^k)-\xi^k, {\bf x}^{*} -{\bf y}^k \right \rangle +\frac{1}{\lambda} D_h({\bf x}^{*} ,{\bf y}^k).
\end{align*}
Rearranging the above,
\begin{align*}
    g({\bf x}^{k+1}) &\leq g({\bf x}^*) +\left \langle \nabla f_1({\bf y}^k)-\xi^k, {\bf x}^{*} -{\bf y}^k \right \rangle +\frac{1}{\lambda}D_h({\bf x}^{*} ,{\bf y}^k) \\
    &\quad - \left \langle \nabla f_1({\bf y}^k)-\xi^k, {\bf x}^{k+1} -{\bf y}^k \right \rangle - \frac{1}{\lambda} D_h({\bf x}^{k+1} ,{\bf y}^k) \\
    &= g({\bf x}^*) +\left \langle \nabla f_1({\bf y}^k)-\xi^k, {\bf x}^{*} -{\bf x}^{k+1} \right \rangle +\frac{1}{\lambda}D_h({\bf x}^{*} ,{\bf y}^k) - \frac{1}{\lambda} D_h({\bf x}^{k+1} ,{\bf y}^k).
\end{align*}
Adding $f_1({\bf x}^{k+1})$ to both sides, we have
\begin{equation} \label{accumlation_point_subproblem}
    \begin{aligned}
        g({\bf x}^{k+1}) +f_1({\bf x}^{k+1}) &\leq g({\bf x}^*)+ f_1({\bf x}^{k+1})  +\left \langle \nabla f_1({\bf y}^k)-\xi^k, {\bf x}^{*} -{\bf x}^{k+1} \right \rangle \\
        & \qquad +\frac{1}{\lambda}D_h({\bf x}^{*} ,{\bf y}^k) - \frac{1}{\lambda} D_h({\bf x}^{k+1} ,{\bf y}^k) \\
        &\leq g({\bf x}^*)+ f_1({\bf x}^{*})  +\left \langle \nabla f_1({\bf y}^k)-\xi^k, {\bf x}^{*} -{\bf x}^{k+1} \right \rangle \\
        & \qquad +\frac{1}{\lambda}D_h({\bf x}^{*} ,{\bf y}^k) - \frac{1}{\lambda} D_h({\bf x}^{k+1} ,{\bf y}^k) -\left \langle \nabla f_1({\bf x}^{k+1}),{\bf x}^* - {\bf x}^{k+1} \right \rangle\\ 
        &\leq g({\bf x}^*)+ f_1({\bf x}^{*})  +\left \langle \nabla f_1({\bf y}^k)-\xi^k, {\bf x}^{*} -{\bf x}^{k+1} \right \rangle \\
        & \qquad +\frac{1}{\lambda}D_h({\bf x}^{*} ,{\bf y}^k) + \frac{1}{\lambda} D_h({\bf y}^k,{\bf x}^{*}) -\left \langle \nabla f_1({\bf x}^{k+1}),{\bf x}^* - {\bf x}^{k+1} \right \rangle,
    \end{aligned}
\end{equation}
where the second inequality follows from the convexity of $f_1$, and the third inequality comes from the convexity of $h$, which is $D_h({\bf x},{\bf y})\geq0, \forall {\bf x}\in  {\rm dom}(h), {\bf y}\in  {\rm int}{\rm dom}(h)$.

Based on Assumption~\ref{asm:assumption_on_h}(iii) that $\nabla h$ is Lipschitz continuous, we can establish 
\begin{align*}
    &\lim_{j\rightarrow\infty} \left (D_h({\bf x}^{*} ,{\bf y}^{k_j}) + D_h({\bf y}^{k_j},{\bf x}^{*}) \right) 
    =\lim_{j\rightarrow\infty} \left \langle \nabla h({\bf x}^*) -\nabla h({\bf y}^{k_j}), {\bf x}^* - {\bf y}^{k_j}\right \rangle \\
    \leq &\lim_{j\rightarrow\infty} \Vert \nabla h({\bf x}^*) -\nabla h({\bf y}^{k_j}) \Vert \Vert  {\bf x}^* - {\bf y}^{k_j} \Vert 
    \leq \lim_{j\rightarrow\infty} L_h \Vert  {\bf x}^* - {\bf y}^{k_j} \Vert^2 =0,
\end{align*}
where the first equality is derived from 
Lemma~\ref{lem:three-point-identity} (Three-point identity),
and the first inequality is deduced using the Cauchy-Schwarz inequality. 
From Theorem~\ref{thm:subsquential_convergence}(ii),
$\left\{\xi^{k_j}\right\}_{j=0}^\infty$ is bounded. By substituting $k=k_j$ back into \eqref{accumlation_point_subproblem} and taking $j \rightarrow \infty$, we can deduce 
\begin{align*}
    \zeta &= \lim_{j\rightarrow\infty} f_1({\bf x}^{k_j+1}) - f_2({\bf x}^{k_j+1}) + g({\bf x}^{k_j+1}) \\
    & \leq \lim_{j\rightarrow\infty} g({\bf x}^*)+ f_1({\bf x}^{*}) +\left \langle \nabla f_1({\bf y}^{k_j})-\xi^{k_j}, {\bf x}^{*} -{\bf x}^{k_j+1} \right \rangle \\
        & \qquad +\frac{1}{\lambda}D_h({\bf x}^{*} ,{\bf y}^{k_j}) + \frac{1}{\lambda} D_h({\bf y}^{k_j},{\bf x}^{*}) -\left \langle \nabla f_1({\bf x}^{k_j+1}),{\bf x}^* - {\bf x}^{k_j+1} \right \rangle -f_2({\bf x}^{k_j+1})\\
    & \leq \limsup_{j\rightarrow\infty} f_1({\bf x}^{*}) -f_2({\bf x}^{k_j+1}) +g({\bf x}^*) 
     \leq \Psi({\bf x}^*),
\end{align*}
where the last line follows from the continuity of $-f_2$.
From the lower semicontinuity of $\Psi$, we have 
\begin{align*}
    \Psi({\bf x}^*) \leq \liminf_{j\rightarrow\infty} \Psi({\bf x}^{k_j+1}) = \lim_{j\rightarrow \infty} \Psi({\bf x}^{k_j+1}) = \zeta.
\end{align*}
 Therefore, we can deduce that $\Psi({\bf x}^*)=\lim_{j\rightarrow \infty} \Psi ({\bf x}^{k_j +1})=\zeta$, leading to the conclusion that $\Psi \equiv \zeta$ on $\Omega$, since the selection of ${\bf x}^* \in \Omega$ is arbitrary. This completes the proof. 

\subsection{Proof of Theorem~\ref{thm:global_convergence} (Global convergence of iBPDCA)}\label{proof:global_convergence}

(i) From Theorem~\ref{thm:subsquential_convergence}(i), we see that $\{{\bf x}^k\}_{k=0}^\infty$ is bounded. With the definition of $\Omega$ from Proposition~\ref{prop:proporty_accumulation_point}, this implies that $\lim_{k\rightarrow \infty} {\rm dist}({\bf x}^k, \Omega) =0$. Recall from Theorem~\ref{thm:subsquential_convergence}(iv) that $\Omega \subseteq {\rm crit} \Psi$. Thus, for any $\mu>0$, there exits $K_0>0$ such that ${\rm dist}({\bf x}^k,\Omega)< \mu$ and ${\bf x}^k \in \mathcal{N}_0$ whenever $k\geq K_0$, where $\mathcal{N}_0$ is the open set as defined in Assumption~\ref{asm:f2_local_continuity}. Moreover, since $\Omega$ is compact due to the boundedness of $\{{\bf x}^k\}_{k=0}^\infty$, by shrinking $\mu$ is necessary, we may assume without the loss of generality that $\nabla f_2$ is globally Lipschitz continuous on the bounded set $\mathcal{N}:=\{{\bf x}\in \mathcal{N}_0\mid {\rm dist}({\bf x},\Omega) < \mu \}$.

Next, we consider the subdifferential of the auxiliary function $H_\delta$ in \eqref{eq:definition_Hk} at the point $({\bf x}^k,{\bf x}^{k-1})$ for $k \geq K_0$, we have
\begin{equation} \nonumber
    \begin{aligned}
    \partial H_\delta (\mathbf{x}^k, \mathbf{x}^{k-1}) &= \left ( \partial_x H_\delta (\mathbf{x}^k, \mathbf{x}^{k-1}) , \partial_y H_\delta (\mathbf{x}^k, \mathbf{x}^{k-1}) \right).
    \end{aligned}
\end{equation}
Since $f_2$ is continuously differentiable in $\mathcal{N}$ and that ${\bf x}^k \in \mathcal{N}$ for $k\geq K_0$, we have 
\begin{equation} \label{eq:partial_H_delta}
    \begin{aligned}
        \partial_x H_\delta (\mathbf{x}^k, \mathbf{x}^{k-1}) & = \nabla f_1(\mathbf{x}^k) - \nabla f_2(\mathbf{x}^k) + \partial g(\mathbf{x}^k) - \delta \left \langle \nabla^2 h(\mathbf{x}), \mathbf{x}^{k-1} - \mathbf{x}^k \right \rangle,  \\
        \partial_y H_\delta (\mathbf{x}^k, \mathbf{x}^{k-1}) & = \delta \left( \nabla h(\mathbf{x}^{k-1}) - \nabla h(\mathbf{x}^k) \right).
    \end{aligned}
\end{equation}

On the other hand, with the first-order optimality condition of Subproblem~\eqref{subproblem:main_subproblem_of_iBPDCA}, for $k\geq K_0 +1$ we have 
\begin{equation} \nonumber
    -\nabla f_1({\bf y}^{k-1}) + \nabla f_2({\bf x}^{k-1}) -\frac{1}{\lambda} \left(\nabla h({\bf x}^k) -\nabla h({\bf y}^{k-1})\right) \in \partial g({\bf x}^{k}),
\end{equation}
since $f_2$ is continuously differentiable in $\mathcal{N}$ and ${\bf x}^{k-1} \in \mathcal{N}$ whenever $k\geq K_0 +1$. Combining this with \eqref{eq:partial_H_delta}, we have
\begin{equation} \nonumber
    \begin{aligned}
    &\nabla f_1(\mathbf{x}^k) -\nabla f_1({\bf y}^{k-1})
    + \nabla f_2({\bf x}^{k-1}) -\nabla f_2(\mathbf{x}^k) -\frac{1}{\lambda} \left(\nabla h({\bf x}^k) -\nabla h({\bf y}^{k-1})\right) \\ 
    & \quad - \delta \left \langle \nabla^2 h(\mathbf{x}), \mathbf{x}^{k-1} - \mathbf{x}^k \right \rangle \in \partial_x H_\delta (\mathbf{x}^k, \mathbf{x}^{k-1}).
    \end{aligned}
\end{equation}
Using this, the definition of ${\bf y}^{k-1}$ and global Lipschitz continuity of $\nabla f_1, \nabla f_2$ and $\nabla h$ on $\mathcal{N}_0$, we see that there exists a $C_1>0$ such that 
\begin{equation}\label{distance_partialH_and_zero}
    \lim_{k\rightarrow\infty} {\rm dist}\left(({\bf 0},{\bf 0}),\partial H_\delta({\bf x}^k,{\bf x}^{k-1})\right) \leq C_1 \left( \Vert {\bf x}^k -{\bf x}^{k-1}\Vert + \Vert {\bf x}^{k-1} - {\bf x}^{k-2}\Vert  \right),
\end{equation}
whenever $k \geq K_0 +1$. According to Theorem~\ref{thm:subsquential_convergence}(iii), $\Vert {\bf x}^{k+1} - {\bf x}^k\Vert \rightarrow 0$, we conclude that 
\begin{equation} \nonumber
    \lim_{k\rightarrow 0} {\rm dist}\left({\bf (0,0)},\partial H_\delta({\bf x}^k,{\bf x}^{k-1})\right) =0.
\end{equation}

(ii) According to Theorem~\ref{thm:subsquential_convergence}(iii), it can be concluded that $\Vert {\bf x}^k - {\bf x}^{k-1} \Vert \rightarrow 0$. Consequently, both ${\bf x}^k$ and ${\bf x}^{k-1}$ converge to ${\bf x}^*$. Let the set of accumulation points for the sequence $\{({\bf x}^k,{\bf x}^{k-1})\}_{k=0}^\infty$ is denoted by $\Upsilon$.
Furthermore, by utilizing Proposition~\ref{prop:convergence_property_of_Dh} and Proposition~\ref{prop:proporty_accumulation_point} (i), we can establish
\begin{align*}
    \lim_{k\rightarrow \infty} H_\delta ({\bf x}^k,{\bf x}^{k-1}) = \lim_{k\rightarrow \infty} \Psi({\bf x}^k) +\delta \lim_{k\rightarrow \infty} D_h({\bf x}^{k-1},{\bf x}^k) = \zeta.
\end{align*}
From Proposition~\ref{prop:proporty_accumulation_point}, we have $\forall ({\bf x}^*,{\bf x}^*) \in \Upsilon \text{ s.t. } {\bf x}^* \in \Omega$, $H_\delta({\bf x}^*,{\bf x}^*)=\Psi({\bf x}^ *)=\zeta$. Since ${\bf x}^*$ is arbitrary, we can conclude that $H_\delta \equiv \zeta$ on $\Upsilon$.

(iii) From Theorem~\ref{thm:subsquential_convergence}(iv), it is known that any accumulation point of $\left\{{\bf x}^k\right\}_{k=0}^\infty$ is a limiting point of Problem \eqref{eq:DCA_model}. Therefore, it is sufficient to prove that $\left\{{\bf x}^k\right\}_{k=0}^\infty$ is convergent. 
First, we consider the case that there exists $k>0$ such that $H_\delta({\bf x}^k,{\bf x}^{k-1})=\zeta$. In accordance with Proposition~\ref{prop:convergence_property_of_Dh} and Proposition~\ref{prop:proporty_accumulation_point}(i), it is known that the sequence $\left\{H_\delta({\bf x}^k,{\bf x}^{k-1})\right\}^\infty_{k=0}$ is non-increasing, and converge to $\zeta$. Hence, we have $H_\delta({\bf x}^{k+\hat{k}},{\bf x}^{k+\hat{k}-1})=\zeta$ for any $\hat{k}\geq 0$. Using \eqref{inequality_for_auxiliary_function}, we know that there exists some $C_2>0$ such that for all $k \in \mathbb{N}$  
\begin{equation} \label{difference_between_H}
    \begin{aligned}
    H_\delta ({\bf x}^k,{\bf x}^{k-1}) &\geq H_\delta({\bf x}^{k+1},{\bf x}^k) + \left(\frac{1}{\lambda} -\frac{\eta}{\kappa}-\delta \right)D_h({\bf x}^k,{\bf x}^{k+1}) \\
    &\quad + \left(\frac{1}{\lambda} -L\right) D_h({\bf x}^{k+1},{\bf y}^k) + \epsilon D_h({\bf x}^{k-1},{\bf x}^k) \\
    &\geq \epsilon D_h({\bf x}^{k-1},{\bf x}^k)
    \geq \frac{\kappa \epsilon}{2} \Vert {\bf x}^k - {\bf x}^{k-1} \Vert^2 
    \geq C_2 \Vert {\bf x}^k - {\bf x}^{k-1} \Vert ^2,
    \end{aligned}
\end{equation}
where the second-to-last inequality is derived from the $\kappa$-strong convexity property of the function $h$. Based on the above, we can conclude that ${\bf x}^k = {\bf x}^{k+\hat{k}}$ holds for all $\hat{k}\geq 0$, which means $\left\{{\bf x}^k \right\}^\infty_{k=0}$ is finitely convergent.

Second, we consider the case that $H_\delta({\bf x}^k,{\bf x}^{k-1})>\zeta$ for all $k\geq 0$. Since $H_\delta({\bf x}^k,{\bf x}^{k-1})$ is a KL function, $\Upsilon$ is a compact subset of ${\rm dom} (\partial H_\delta)$ and $H_\delta \equiv \zeta$ on $\Upsilon$ from (ii), by Lemma~\ref{thm:uniformized_KL} (Uniformized KL property), there exist $\vartheta>0,~\tau>0$ and a concave function $\varrho\in \Phi_{\tau}$ such that
\begin{equation} \label{concave_function_dist}
    \varrho'\left(H_\delta({\bf x},{\bf y})-\zeta\right) \cdot {\rm dist}\left({\bf (0,0)},\partial H_\delta({\bf x},{\bf y})\right) \geq 1, \quad \forall ({\bf x},{\bf y}) \in U,
\end{equation}
where $U= \{({\bf x},{\bf y})\in \mathbb{R}^n \times \mathbb{R}^n \mid {\rm dist}(({\bf x},{\bf y}),\Upsilon)< \vartheta\} \cap \{({\bf x},{\bf y})\in \mathbb{R}^n \times \mathbb{R}^n \mid \zeta < H_\delta({\bf x},{\bf y}))<\zeta+\tau\}$.

Since $\Upsilon$ is the set of accumulation points of $\left\{({\bf x}^k, {\bf x}^{k-1})\right\}_{k=0}^\infty$ as mentioned in (ii), and $\{{\bf x}^k\}_{k=0}^\infty$ is bounded due to Theorem~\ref{thm:subsquential_convergence}(i), we have
\begin{equation} \nonumber
    \lim_{k\rightarrow 0} {\rm dist}(({\bf x}^k,{\bf x}^{k-1}),\Upsilon) =0.
\end{equation}
Hence, there exists $K_1>0$ such that ${\rm dist}(({\bf x}^k,{\bf x}^{k-1}),\Upsilon)<\vartheta, \forall k\geq K_1$. As mentioned earlier, the sequence $\left\{H_\delta({\bf x}^k,{\bf x}^{k-1})\right\}^\infty_{k=0}$ is non-increasing and converge to $\zeta$. Consequently, there exists $K_2>0$ such that $\zeta<H_\delta({\bf x}^k,{\bf x}^{k-1})<\zeta+\tau, \forall k\geq K_2$. By setting $\overline{K}=\max\{K_0+1,K_1,K_2\}$, where $K_0$ is defined in Proof of Theorem~\ref{thm:global_convergence}(i), it follows that the sequence $\{{\bf x}^k\}_{k\geq\overline{K}}\in U$. By referencing \eqref{concave_function_dist}, we obtain
\begin{equation} \label{diff_phi_greater_than_one}
    \varrho'\left(H_\delta({\bf x}^k,{\bf x}^{k-1})-\zeta\right) \cdot {\rm dist}\left({\bf (0,0)},\partial H_\delta({\bf x}^k,{\bf x}^{k-1})\right) \geq 1, \quad \forall k\geq\overline{K}.
\end{equation}
Due to the concavity of $\varrho$, we have that $\forall k \geq \overline{K}$, 
\begin{align*}
    &\left [\varrho\left(H_\delta({\bf x}^k,{\bf x}^{k-1}) -\zeta \right) -\varrho(H_\delta({\bf x}^{k+1},{\bf x}^{k}) -\zeta ) \right]\cdot {\rm dist} \left({\bf (0,0)},\partial H_\delta({\bf x}^k,{\bf x}^{k-1})\right)\\
    &\geq \varrho'\left(H_\delta({\bf x}^k,{\bf x}^{k-1}) -\zeta \right) \cdot {\rm dist}\left({\bf (0,0)},\partial H_\delta({\bf x}^k,{\bf x}^{k-1})\right) 
     \cdot \left(H_\delta({\bf x}^k,{\bf x}^{k-1})-H_\delta({\bf x}^{k+1},{\bf x}^k)\right)\\
    &\geq H_\delta({\bf x}^k,{\bf x}^{k-1})-H_\delta({\bf x}^{k+1},{\bf x}^k)
    \geq C_2 \Vert {\bf x}^k - {\bf x}^{k-1} \Vert^2,
\end{align*}
where last inequality comes from \eqref{difference_between_H}, and the second-to-last inequality holds due to \eqref{diff_phi_greater_than_one} and that the sequence $\{H_\delta({\bf x}^k,{\bf x}^{k-1})\}^\infty_{k=0}$ is non-increasing. 
Utilizing the above, together with \eqref{distance_partialH_and_zero}, we have $\forall k \geq \overline{K}$,
\begin{align*}
    \Vert {\bf x}^k - {\bf x}^{k-1} \Vert^2 &\leq \frac{C_1}{C_2} \left [\varrho\left(H_\delta({\bf x}^k,{\bf x}^{k-1}) -\zeta \right) -\varrho \left(H_\delta({\bf x}^{k+1},{\bf x}^{k}) -\zeta \right) \right]\\
    &\quad \cdot \left(\Vert {\bf x}^k - {\bf x}^{k-1} \Vert + \Vert {\bf x}^{k-1} -{\bf x}^{k-2}\Vert \right).
\end{align*}
By taking the square root of both sides and applying the inequality of arithmetic and geometric means, we obtain
\begin{align*}
    \Vert {\bf x}^k - {\bf x}^{k-1} \Vert &\leq \sqrt{\frac{2 C_1}{C_2} \left [\varrho\left(H_\delta({\bf x}^k,{\bf x}^{k-1}) -\zeta \right) -\varrho\left(H_\delta({\bf x}^{k+1},{\bf x}^{k}) -\zeta \right) \right]}\\
    &\quad \cdot \sqrt{\frac{\Vert {\bf x}^k - {\bf x}^{k-1} \Vert + \Vert {\bf x}^{k-1} -{\bf x}^{k-2}\Vert}{2}} \\
    &\leq \frac{C_1}{C_2} \left [\varrho\left(H_\delta({\bf x}^k,{\bf x}^{k-1}) -\zeta \right) -\varrho\left(H_\delta({\bf x}^{k+1},{\bf x}^{k}) -\zeta \right) \right]\\
    &\quad + \frac{\Vert {\bf x}^k - {\bf x}^{k-1} \Vert + \Vert {\bf x}^{k-1} -{\bf x}^{k-2}\Vert}{4}.
\end{align*}
Then we have
\begin{align*}
    \frac{1}{2}\Vert {\bf x}^k - {\bf x}^{k-1} \Vert &\leq \frac{C_1}{C_2} \left [\varrho\left(H_\delta({\bf x}^k,{\bf x}^{k-1}) -\zeta \right) -\varrho\left(H_\delta({\bf x}^{k+1},{\bf x}^{k}) -\zeta \right) \right]\\
    &\qquad + \frac{1}{4} ( \Vert {\bf x}^{k-1} -{\bf x}^{k-2}\Vert-\Vert {\bf x}^k - {\bf x}^{k-1}\Vert).
\end{align*}
Summing the above from $k=\overline{K}$ to $\infty$,
\begin{align*}
    \sum_{k=\overline{K}}^\infty \Vert {\bf x}^k - {\bf x}^{k-1} \Vert \leq \frac{2C_1}{C_2}\varrho\left(H_\delta({\bf x}^{\overline{K}},{\bf x}^{\overline{K}-1}) -\zeta\right) +\frac{1}{2} \Vert {\bf x}^{\overline{K}-1} -{\bf x}^{\overline{K}-2} \Vert <\infty,
\end{align*}
which implies the global convergence of $\{{\bf x}^k\}$ and summability of $\{\Vert{\bf x}^{k+1}-{\bf x}^k\Vert\}_{k\geq 0}$. This completes the proof.

\bibliography{references}

\begin{thebibliography}{95}
\providecommand{\natexlab}[1]{#1}
\providecommand{\url}[1]{\texttt{#1}}
\expandafter\ifx\csname urlstyle\endcsname\relax
  \providecommand{\doi}[1]{doi: #1}\else
  \providecommand{\doi}{doi: \begingroup \urlstyle{rm}\Url}\fi

\bibitem[An and Tao(2005)]{an2005dc}
L.~T.~H. An and P.~D. Tao.
\newblock The {DC} (difference of convex functions) programming and {DCA} revisited with {DC} models of real world nonconvex optimization problems.
\newblock \emph{Annals of Operations Research}, 133:\penalty0 23--46, 2005.

\bibitem[Arag{\'o}n~Artacho and Vuong(2020)]{aragon2020boosted}
F.~J. Arag{\'o}n~Artacho and P.~T. Vuong.
\newblock The boosted difference of convex functions algorithm for nonsmooth functions.
\newblock \emph{SIAM Journal on Optimization}, 30\penalty0 (1):\penalty0 980--1006, 2020.

\bibitem[Arag{\'o}n~Artacho et~al.(2018)Arag{\'o}n~Artacho, Fleming, and Vuong]{aragon2018accelerating}
F.~J. Arag{\'o}n~Artacho, R.~M. Fleming, and P.~T. Vuong.
\newblock Accelerating the {DC} algorithm for smooth functions.
\newblock \emph{Mathematical Programming}, 169:\penalty0 95--118, 2018.

\bibitem[Attouch et~al.(2010)Attouch, Bolte, Redont, and Soubeyran]{attouch2010proximal}
H.~Attouch, J.~Bolte, P.~Redont, and A.~Soubeyran.
\newblock Proximal alternating minimization and projection methods for nonconvex problems: An approach based on the {K}urdyka-{{\L}}ojasiewicz inequality.
\newblock \emph{Mathematics of Operations Research}, 35\penalty0 (2):\penalty0 438--457, 2010.

\bibitem[Attouch et~al.(2013)Attouch, Bolte, and Svaiter]{attouch2013convergence}
H.~Attouch, J.~Bolte, and B.~F. Svaiter.
\newblock Convergence of descent methods for semi-algebraic and tame problems: proximal algorithms, forward--backward splitting, and regularized {G}auss--{S}eidel methods.
\newblock \emph{Mathematical Programming}, 137\penalty0 (1):\penalty0 91--129, 2013.

\bibitem[Baricz and Neuman(2007)]{baricz2007inequalities}
{\'A}.~Baricz and E.~Neuman.
\newblock Inequalities involving modified {B}essel functions of the first kind {II}.
\newblock \emph{Journal of Mathematical Analysis and Applications}, 332\penalty0 (1):\penalty0 265--271, 2007.

\bibitem[Bauschke et~al.(2017)Bauschke, Bolte, and Teboulle]{bauschke2017descent}
H.~H. Bauschke, J.~Bolte, and M.~Teboulle.
\newblock A descent lemma beyond {L}ipschitz gradient continuity: first-order methods revisited and applications.
\newblock \emph{Mathematics of Operations Research}, 42\penalty0 (2):\penalty0 330--348, 2017.

\bibitem[Beck and Teboulle(2009)]{beck2009fast}
A.~Beck and M.~Teboulle.
\newblock A fast iterative shrinkage-thresholding algorithm for linear inverse problems.
\newblock \emph{SIAM Journal on Imaging Sciences}, 2\penalty0 (1):\penalty0 183--202, 2009.

\bibitem[Bolte et~al.(2007)Bolte, Daniilidis, and Lewis]{bolte2007lojasiewicz}
J.~Bolte, A.~Daniilidis, and A.~Lewis.
\newblock The {{\L}}ojasiewicz inequality for nonsmooth subanalytic functions with applications to subgradient dynamical systems.
\newblock \emph{SIAM Journal on Optimization}, 17\penalty0 (4):\penalty0 1205--1223, 2007.

\bibitem[Bolte et~al.(2014)Bolte, Sabach, and Teboulle]{bolte2014proximal}
J.~Bolte, S.~Sabach, and M.~Teboulle.
\newblock Proximal alternating linearized minimization for nonconvex and nonsmooth problems.
\newblock \emph{Mathematical Programming}, 146\penalty0 (1):\penalty0 459--494, 2014.

\bibitem[Bolte et~al.(2018)Bolte, Sabach, Teboulle, and Vaisbourd]{bolte2018first}
J.~Bolte, S.~Sabach, M.~Teboulle, and Y.~Vaisbourd.
\newblock First order methods beyond convexity and {L}ipschitz gradient continuity with applications to quadratic inverse problems.
\newblock \emph{SIAM Journal on Optimization}, 28\penalty0 (3):\penalty0 2131--2151, 2018.

\bibitem[Bradley and Mangasarian(1998)]{bradley1998feature}
P.~S. Bradley and O.~L. Mangasarian.
\newblock Feature selection via concave minimization and support vector machines.
\newblock In \emph{International Conference on Machine Learning}, volume~98, pages 82--90, 1998.

\bibitem[Bregman(1967)]{bregman1967relaxation}
L.~M. Bregman.
\newblock The relaxation method of finding the common point of convex sets and its application to the solution of problems in convex programming.
\newblock \emph{USSR Computational Mathematics and Mathematical Physics}, 7\penalty0 (3):\penalty0 200--217, 1967.

\bibitem[Buzzard et~al.(2018)Buzzard, Chan, Sreehari, and Bouman]{buzzard2018plug}
G.~T. Buzzard, S.~H. Chan, S.~Sreehari, and C.~A. Bouman.
\newblock Plug-and-play unplugged: Optimization-free reconstruction using consensus equilibrium.
\newblock \emph{SIAM Journal on Imaging Sciences}, 11\penalty0 (3):\penalty0 2001--2020, 2018.

\bibitem[Candes et~al.(2015)Candes, Li, and Soltanolkotabi]{candes2015phase}
E.~J. Candes, X.~Li, and M.~Soltanolkotabi.
\newblock Phase retrieval from coded diffraction patterns.
\newblock \emph{Applied and Computational Harmonic Analysis}, 39\penalty0 (2):\penalty0 277--299, 2015.

\bibitem[Censor and Zenios(1992)]{censor1992proximal}
Y.~Censor and S.~A. Zenios.
\newblock Proximal minimization algorithm with {D}-functions.
\newblock \emph{Journal of Optimization Theory and Applications}, 73\penalty0 (3):\penalty0 451--464, 1992.

\bibitem[Chan et~al.(2016)Chan, Wang, and Elgendy]{chan2016plug}
S.~H. Chan, X.~Wang, and O.~A. Elgendy.
\newblock Plug-and-play {ADMM} for image restoration: Fixed-point convergence and applications.
\newblock \emph{IEEE Transactions on Computational Imaging}, 3\penalty0 (1):\penalty0 84--98, 2016.

\bibitem[Chang et~al.(2016)Chang, Lou, Ng, and Zeng]{chang2016phase}
H.~Chang, Y.~Lou, M.~K. Ng, and T.~Zeng.
\newblock Phase retrieval from incomplete magnitude information via total variation regularization.
\newblock \emph{SIAM Journal on Scientific Computing}, 38\penalty0 (6):\penalty0 3672--3695, 2016.

\bibitem[Chen and Teboulle(1993)]{chen1993convergence}
G.~Chen and M.~Teboulle.
\newblock Convergence analysis of a proximal-like minimization algorithm using {B}regman functions.
\newblock \emph{SIAM Journal on Optimization}, 3\penalty0 (3):\penalty0 538--543, 1993.

\bibitem[Chen and Zeng(2015)]{chen2015convex}
L.~Chen and T.~Zeng.
\newblock A convex variational model for restoring blurred images with large {R}ician noise.
\newblock \emph{Journal of Mathematical Imaging and Vision}, 53:\penalty0 92--111, 2015.

\bibitem[Chen et~al.(2019)Chen, Li, and Zeng]{chen2019variational}
L.~Chen, Y.~Li, and T.~Zeng.
\newblock Variational image restoration and segmentation with {R}ician noise.
\newblock \emph{Journal of Scientific Computing}, 78:\penalty0 1329--1352, 2019.

\bibitem[Chen et~al.(2022)Chen, Chen, Chen, Heaton, Liu, Wang, and Yin]{chen2022learning}
T.~Chen, X.~Chen, W.~Chen, H.~Heaton, J.~Liu, Z.~Wang, and W.~Yin.
\newblock Learning to optimize: A primer and a benchmark.
\newblock \emph{Journal of Machine Learning Research}, 23\penalty0 (189):\penalty0 1--59, 2022.

\bibitem[De~Oliveira and Tcheou(2019)]{de2019inertial}
W.~De~Oliveira and M.~P. Tcheou.
\newblock An inertial algorithm for {DC} programming.
\newblock \emph{Set-Valued and Variational Analysis}, 27\penalty0 (4):\penalty0 895--919, 2019.

\bibitem[Ding et~al.(2023)Ding, Li, and Toh]{ding2023nonconvex}
K.~Ding, J.~Li, and K.-C. Toh.
\newblock Nonconvex stochastic {B}regman proximal gradient method with application to deep learning.
\newblock \emph{arXiv preprint arXiv:2306.14522}, 2023.

\bibitem[Fannjiang and Strohmer(2020)]{fannjiang2020numerics}
A.~Fannjiang and T.~Strohmer.
\newblock The numerics of phase retrieval.
\newblock \emph{Acta Numerica}, 29:\penalty0 125--228, 2020.

\bibitem[Fukushima and Mine(1981)]{fukushima1981generalized}
M.~Fukushima and H.~Mine.
\newblock A generalized proximal point algorithm for certain non-convex minimization problems.
\newblock \emph{International Journal of Systems Science}, 12\penalty0 (8):\penalty0 989--1000, 1981.

\bibitem[Gasso et~al.(2009)Gasso, Rakotomamonjy, and Canu]{gasso2009recovering}
G.~Gasso, A.~Rakotomamonjy, and S.~Canu.
\newblock Recovering sparse signals with a certain family of nonconvex penalties and {DC} programming.
\newblock \emph{IEEE Transactions on Signal Processing}, 57\penalty0 (12):\penalty0 4686--4698, 2009.

\bibitem[Gaur et~al.(2015)Gaur, Mohan, and Khare]{gaur2015sparsity}
C.~Gaur, B.~Mohan, and K.~Khare.
\newblock Sparsity-assisted solution to the twin image problem in phase retrieval.
\newblock \emph{Journal of the Optical Society of America A}, 32\penalty0 (11):\penalty0 1922--1927, 2015.

\bibitem[Gerchberg(1972)]{gerchberg1972practical}
R.~W. Gerchberg.
\newblock A practical algorithm for the determination of plane from image and diffraction pictures.
\newblock \emph{Optik}, 35\penalty0 (2):\penalty0 237--246, 1972.

\bibitem[Getreuer et~al.(2011)Getreuer, Tong, and Vese]{getreuer2011variational}
P.~Getreuer, M.~Tong, and L.~A. Vese.
\newblock A variational model for the restoration of {MR} images corrupted by blur and {R}ician noise.
\newblock In \emph{International Symposium on Visual Computing}, pages 686--698. Springer, 2011.

\bibitem[Godeme et~al.(2023)Godeme, Fadili, Buet, Zerrad, Lequime, and Amra]{godeme2023provable}
J.-J. Godeme, J.~Fadili, X.~Buet, M.~Zerrad, M.~Lequime, and C.~Amra.
\newblock Provable phase retrieval with mirror descent.
\newblock \emph{SIAM Journal on Imaging Sciences}, 16\penalty0 (3):\penalty0 1106--1141, 2023.

\bibitem[Gotoh et~al.(2018)Gotoh, Takeda, and Tono]{gotoh2018dc}
J.-y. Gotoh, A.~Takeda, and K.~Tono.
\newblock {DC} formulations and algorithms for sparse optimization problems.
\newblock \emph{Mathematical Programming}, 169:\penalty0 141--176, 2018.

\bibitem[Goujon et~al.(2024)Goujon, Neumayer, and Unser]{goujon2024learning}
A.~Goujon, S.~Neumayer, and M.~Unser.
\newblock Learning weakly convex regularizers for convergent image-reconstruction algorithms.
\newblock \emph{SIAM Journal on Imaging Sciences}, 17\penalty0 (1):\penalty0 91--115, 2024.

\bibitem[Gray and Mathews(1895)]{gray1895treatise}
A.~Gray and G.~B. Mathews.
\newblock \emph{A treatise on {B}essel functions and their applications to physics}.
\newblock Macmillan, 1895.

\bibitem[Gribonval and Nikolova(2020)]{gribonval2020characterization}
R.~Gribonval and M.~Nikolova.
\newblock A characterization of proximity operators.
\newblock \emph{Journal of Mathematical Imaging and Vision}, 62\penalty0 (6):\penalty0 773--789, 2020.

\bibitem[Harrison(1993)]{harrison1993phase}
R.~W. Harrison.
\newblock Phase problem in crystallography.
\newblock \emph{Journal of the Optical Society of America A}, 10\penalty0 (5):\penalty0 1046--1055, 1993.

\bibitem[Hurault et~al.(2022{\natexlab{a}})Hurault, Leclaire, and Papadakis]{hurault2022gradient}
S.~Hurault, A.~Leclaire, and N.~Papadakis.
\newblock Gradient step denoiser for convergent plug-and-play.
\newblock In \emph{International Conference on Learning Representations}, 2022{\natexlab{a}}.

\bibitem[Hurault et~al.(2022{\natexlab{b}})Hurault, Leclaire, and Papadakis]{hurault2022proximal}
S.~Hurault, A.~Leclaire, and N.~Papadakis.
\newblock Proximal denoiser for convergent plug-and-play optimization with nonconvex regularization.
\newblock In \emph{International Conference on Machine Learning}, pages 9483--9505. PMLR, 2022{\natexlab{b}}.

\bibitem[Hurault et~al.(2024)Hurault, Kamilov, Leclaire, and Papadakis]{hurault2024convergent}
S.~Hurault, U.~Kamilov, A.~Leclaire, and N.~Papadakis.
\newblock Convergent {B}regman plug-and-play image restoration for poisson inverse problems.
\newblock \emph{Advances in Neural Information Processing Systems}, 36, 2024.

\bibitem[Kang et~al.(2015)Kang, Kang, and Jung]{kang2015nonconvex}
M.~Kang, M.~Kang, and M.~Jung.
\newblock Nonconvex higher-order regularization based {R}ician noise removal with spatially adaptive parameters.
\newblock \emph{Journal of Visual Communication and Image Representation}, 32:\penalty0 180--193, 2015.

\bibitem[Katkovnik(2017)]{katkovnik2017phase}
V.~Katkovnik.
\newblock Phase retrieval from noisy data based on sparse approximation of object phase and amplitude.
\newblock \emph{arXiv preprint arXiv:1709.01071}, 2017.

\bibitem[Krantz and Parks(2002)]{krantz2002primer}
S.~G. Krantz and H.~R. Parks.
\newblock \emph{A primer of real analytic functions}.
\newblock Springer Science \& Business Media, 2002.

\bibitem[Le~Thi and Pham~Dinh(2018)]{le2018dc}
H.~A. Le~Thi and T.~Pham~Dinh.
\newblock {DC} programming and {DCA}: Thirty years of developments.
\newblock \emph{Mathematical Programming}, 169\penalty0 (1):\penalty0 5--68, 2018.

\bibitem[Le~Thi et~al.(2014)Le~Thi, Pham~Dinh, and Belghiti]{le2014dca}
H.~A. Le~Thi, T.~Pham~Dinh, and M.~Belghiti.
\newblock {DCA} based algorithms for multiple sequence alignment ({MSA}).
\newblock \emph{Central European Journal of Operations Research}, 22\penalty0 (3):\penalty0 501--524, 2014.

\bibitem[Le~Thi et~al.(2021)Le~Thi, Le, Phan, and Tran]{le2021novel}
H.~A. Le~Thi, H.~M. Le, D.~N. Phan, and B.~Tran.
\newblock Novel {DCA} based algorithms for a special class of nonconvex problems with application in machine learning.
\newblock \emph{Applied Mathematics and Computation}, 409:\penalty0 125904, 2021.

\bibitem[Liu et~al.(2023)Liu, Fan, Yang, and Zhang]{liu2023prista}
A.~Liu, X.~Fan, Y.~Yang, and J.~Zhang.
\newblock Prista-net: Deep iterative shrinkage thresholding network for coded diffraction patterns phase retrieval.
\newblock \emph{arXiv preprint arXiv:2309.04171}, 2023.

\bibitem[Lou et~al.(2015)Lou, Zeng, Osher, and Xin]{lou2015weighted}
Y.~Lou, T.~Zeng, S.~Osher, and J.~Xin.
\newblock A weighted difference of anisotropic and isotropic total variation model for image processing.
\newblock \emph{SIAM Journal on Imaging Sciences}, 8\penalty0 (3):\penalty0 1798--1823, 2015.

\bibitem[Lu and Zhou(2019)]{lu2019nonmonotone}
Z.~Lu and Z.~Zhou.
\newblock Nonmonotone enhanced proximal {DC} algorithms for a class of structured nonsmooth {DC} programming.
\newblock \emph{SIAM Journal on Optimization}, 29\penalty0 (4):\penalty0 2725--2752, 2019.

\bibitem[Lu et~al.(2019)Lu, Zhou, and Sun]{lu2019enhanced}
Z.~Lu, Z.~Zhou, and Z.~Sun.
\newblock Enhanced proximal {DC} algorithms with extrapolation for a class of structured nonsmooth {DC} minimization.
\newblock \emph{Mathematical Programming}, 176:\penalty0 369--401, 2019.

\bibitem[Metzler et~al.(2018)Metzler, Schniter, Veeraraghavan, and Baraniuk]{metzler2018prdeep}
C.~Metzler, P.~Schniter, A.~Veeraraghavan, and R.~Baraniuk.
\newblock prdeep: Robust phase retrieval with a flexible deep network.
\newblock In \emph{International Conference on Machine Learning}, pages 3501--3510. PMLR, 2018.

\bibitem[Miao et~al.(1999)Miao, Charalambous, Kirz, and Sayre]{miao1999extending}
J.~Miao, P.~Charalambous, J.~Kirz, and D.~Sayre.
\newblock Extending the methodology of {X}-ray crystallography to allow imaging of micrometre-sized non-crystalline specimens.
\newblock \emph{Nature}, 400\penalty0 (6742):\penalty0 342--344, 1999.

\bibitem[Millane(1990)]{millane1990phase}
R.~P. Millane.
\newblock Phase retrieval in crystallography and optics.
\newblock \emph{Journal of the Optical Society of America A}, 7\penalty0 (3):\penalty0 394--411, 1990.

\bibitem[Moreau(1965)]{moreau1965proximite}
J.-J. Moreau.
\newblock Proximit{\'e} et dualit{\'e} dans un espace hilbertien.
\newblock \emph{Bulletin de la Soci{\'e}t{\'e} math{\'e}matique de France}, 93:\penalty0 273--299, 1965.

\bibitem[Mukkamala et~al.(2020)Mukkamala, Ochs, Pock, and Sabach]{mukkamala2020convex}
M.~C. Mukkamala, P.~Ochs, T.~Pock, and S.~Sabach.
\newblock Convex-concave backtracking for inertial {B}regman proximal gradient algorithms in nonconvex optimization.
\newblock \emph{SIAM Journal on Mathematics of Data Science}, 2\penalty0 (3):\penalty0 658--682, 2020.

\bibitem[Nakayama et~al.(2024)Nakayama, Narushima, and Yabe]{nakayama2024inexact}
S.~Nakayama, Y.~Narushima, and H.~Yabe.
\newblock Inexact proximal {DC} {N}ewton-type method for nonconvex composite functions.
\newblock \emph{Computational Optimization and Applications}, 87\penalty0 (2):\penalty0 611--640, 2024.

\bibitem[Nesterov(1983)]{nesterov1983method}
Y.~Nesterov.
\newblock A method for solving the convex programming problem with convergence rate o (1/k2).
\newblock In \emph{Dokl Akad Nauk Sssr}, volume 269, page 543, 1983.

\bibitem[Nesterov(2007)]{nesterov2007dual}
Y.~Nesterov.
\newblock Dual extrapolation and its applications to solving variational inequalities and related problems.
\newblock \emph{Mathematical Programming}, 109\penalty0 (2):\penalty0 319--344, 2007.

\bibitem[Nesterov(2013{\natexlab{a}})]{nesterov2013gradient}
Y.~Nesterov.
\newblock Gradient strategies for minimizing composite functions.
\newblock \emph{Mathematical Programming}, 140\penalty0 (1):\penalty0 125--161, 2013{\natexlab{a}}.

\bibitem[Nesterov(2013{\natexlab{b}})]{nesterov2013introductory}
Y.~Nesterov.
\newblock \emph{Introductory lectures on convex optimization: A basic course}, volume~87.
\newblock Springer Science \& Business Media, 2013{\natexlab{b}}.

\bibitem[Nhat et~al.(2018)Nhat, Le, and Le~Thi]{nhat2018accelerated}
P.~D. Nhat, H.~M. Le, and H.~A. Le~Thi.
\newblock Accelerated difference of convex functions algorithm and its application to sparse binary logistic regression.
\newblock In \emph{International Joint Conferences on Artificial Intelligence}, pages 1369--1375, 2018.

\bibitem[Phan and Le~Thi(2024)]{phan2023difference}
D.~N. Phan and H.~A. Le~Thi.
\newblock Difference-of-convex algorithm with extrapolation for nonconvex, nonsmooth optimization problems.
\newblock \emph{Mathematics of Operations Research}, 49\penalty0 (3):\penalty0 1973--1985, 2024.

\bibitem[Polyak(1964)]{polyak1964some}
B.~T. Polyak.
\newblock Some methods of speeding up the convergence of iteration methods.
\newblock \emph{Ussr Computational Mathematics and Mathematical Physics}, 4\penalty0 (5):\penalty0 1--17, 1964.

\bibitem[Reehorst and Schniter(2018)]{reehorst2018regularization}
E.~T. Reehorst and P.~Schniter.
\newblock Regularization by denoising: Clarifications and new interpretations.
\newblock \emph{IEEE Transactions on Computational Imaging}, 5\penalty0 (1):\penalty0 52--67, 2018.

\bibitem[Rockafellar(1970)]{rockafellar2015convex}
R.~T. Rockafellar.
\newblock \emph{Convex Analysis}.
\newblock Princeton University Press, Princeton, NJ, 1970.

\bibitem[Rockafellar and Wets(2009)]{rockafellar2009variational}
R.~T. Rockafellar and R.~J.-B. Wets.
\newblock \emph{Variational Analysis}.
\newblock Springer Science \& Business Media, 2009.

\bibitem[Romano et~al.(2017)Romano, Elad, and Milanfar]{romano2017little}
Y.~Romano, M.~Elad, and P.~Milanfar.
\newblock The little engine that could: Regularization by denoising ({RED}).
\newblock \emph{SIAM Journal on Imaging Sciences}, 10\penalty0 (4):\penalty0 1804--1844, 2017.

\bibitem[Ryu et~al.(2019)Ryu, Liu, Wang, Chen, Wang, and Yin]{ryu2019plug}
E.~Ryu, J.~Liu, S.~Wang, X.~Chen, Z.~Wang, and W.~Yin.
\newblock Plug-and-play methods provably converge with properly trained denoisers.
\newblock In \emph{International Conference on Machine Learning}, pages 5546--5557. PMLR, 2019.

\bibitem[Shi et~al.(2015)Shi, Lian, and Chen]{shi2015sparse}
B.~Shi, Q.~Lian, and S.~Chen.
\newblock Sparse representation utilizing tight frame for phase retrieval.
\newblock \emph{EURASIP Journal on Advances in Signal Processing}, 2015:\penalty0 1--11, 2015.

\bibitem[Sreehari et~al.(2016)Sreehari, Venkatakrishnan, Wohlberg, Buzzard, Drummy, Simmons, and Bouman]{sreehari2016plug}
S.~Sreehari, S.~V. Venkatakrishnan, B.~Wohlberg, G.~T. Buzzard, L.~F. Drummy, J.~P. Simmons, and C.~A. Bouman.
\newblock Plug-and-play priors for bright field electron tomography and sparse interpolation.
\newblock \emph{IEEE Transactions on Computational Imaging}, 2\penalty0 (4):\penalty0 408--423, 2016.

\bibitem[Sun et~al.(2019)Sun, Wohlberg, and Kamilov]{sun2019online}
Y.~Sun, B.~Wohlberg, and U.~S. Kamilov.
\newblock An online plug-and-play algorithm for regularized image reconstruction.
\newblock \emph{IEEE Transactions on Computational Imaging}, 5\penalty0 (3):\penalty0 395--408, 2019.

\bibitem[Takahashi et~al.(2022)Takahashi, Fukuda, and Tanaka]{takahashi2022new}
S.~Takahashi, M.~Fukuda, and M.~Tanaka.
\newblock New {B}regman proximal type algorithms for solving {DC} optimization problems.
\newblock \emph{Computational Optimization and Applications}, 83\penalty0 (3):\penalty0 893--931, 2022.

\bibitem[Tao and An(1997)]{tao1997convex}
P.~D. Tao and L.~H. An.
\newblock Convex analysis approach to {DC} programming: theory, algorithms and applications.
\newblock \emph{Acta Mathematica Vietnamica}, 22\penalty0 (1):\penalty0 289--355, 1997.

\bibitem[Tao and An(1998)]{tao1998dc}
P.~D. Tao and L.~T.~H. An.
\newblock A {DC} optimization algorithm for solving the trust-region subproblem.
\newblock \emph{SIAM Journal on Optimization}, 8\penalty0 (2):\penalty0 476--505, 1998.

\bibitem[Teboulle(1992)]{teboulle1992entropic}
M.~Teboulle.
\newblock Entropic proximal mappings with applications to nonlinear programming.
\newblock \emph{Mathematics of Operations Research}, 17\penalty0 (3):\penalty0 670--690, 1992.

\bibitem[Terris et~al.(2020)Terris, Repetti, Pesquet, and Wiaux]{terris2020building}
M.~Terris, A.~Repetti, J.-C. Pesquet, and Y.~Wiaux.
\newblock Building firmly nonexpansive convolutional neural networks.
\newblock In \emph{ICASSP 2020-2020 IEEE International Conference on Acoustics, Speech and Signal Processing (ICASSP)}, pages 8658--8662. IEEE, 2020.

\bibitem[Tillmann et~al.(2016)Tillmann, Eldar, and Mairal]{tillmann2016dolphin}
A.~M. Tillmann, Y.~C. Eldar, and J.~Mairal.
\newblock Dolphin—dictionary learning for phase retrieval.
\newblock \emph{IEEE Transactions on Signal Processing}, 64\penalty0 (24):\penalty0 6485--6500, 2016.

\bibitem[Tirer and Giryes(2018)]{tirer2018image}
T.~Tirer and R.~Giryes.
\newblock Image restoration by iterative denoising and backward projections.
\newblock \emph{IEEE Transactions on Image Processing}, 28\penalty0 (3):\penalty0 1220--1234, 2018.

\bibitem[Venkatakrishnan et~al.(2013)Venkatakrishnan, Bouman, and Wohlberg]{venkatakrishnan2013plug}
S.~V. Venkatakrishnan, C.~A. Bouman, and B.~Wohlberg.
\newblock Plug-and-play priors for model based reconstruction.
\newblock In \emph{2013 IEEE Global Conference on Signal and Information Processing}, pages 945--948. IEEE, 2013.

\bibitem[Walther(1963)]{walther1963question}
A.~Walther.
\newblock The question of phase retrieval in optics.
\newblock \emph{Optica Acta: International Journal of Optics}, 10\penalty0 (1):\penalty0 41--49, 1963.

\bibitem[Wang et~al.(2017)Wang, Giannakis, and Eldar]{wang2017solving}
G.~Wang, G.~B. Giannakis, and Y.~C. Eldar.
\newblock Solving systems of random quadratic equations via truncated amplitude flow.
\newblock \emph{IEEE Transactions on Information Theory}, 64\penalty0 (2):\penalty0 773--794, 2017.

\bibitem[Wei et~al.(2023)Wei, Weng, and Li]{wei2023nonconvex}
D.~Wei, S.~Weng, and F.~Li.
\newblock Nonconvex {R}ician noise removal via convergent plug-and-play framework.
\newblock \emph{Applied Mathematical Modelling}, 123:\penalty0 197--212, 2023.

\bibitem[Wei et~al.(2022)Wei, Aviles-Rivero, Liang, Fu, Huang, and Sch{\"o}nlieb]{wei2022tfpnp}
K.~Wei, A.~Aviles-Rivero, J.~Liang, Y.~Fu, H.~Huang, and C.-B. Sch{\"o}nlieb.
\newblock Tfpnp: Tuning-free plug-and-play proximal algorithms with applications to inverse imaging problems.
\newblock \emph{Journal of Machine Learning Research}, 23\penalty0 (16):\penalty0 1--48, 2022.

\bibitem[Wen et~al.(2018)Wen, Chen, and Pong]{wen2018proximal}
B.~Wen, X.~Chen, and T.~K. Pong.
\newblock A proximal difference-of-convex algorithm with extrapolation.
\newblock \emph{Computational Optimization and applications}, 69:\penalty0 297--324, 2018.

\bibitem[Wu et~al.(2022)Wu, Gu, Li, Li, Niu, and Zeng]{wu2022efficient}
T.~Wu, X.~Gu, Z.~Li, Z.~Li, J.~Niu, and T.~Zeng.
\newblock Efficient boosted {DC} algorithm for nonconvex image restoration with {R}ician noise.
\newblock \emph{SIAM Journal on Imaging Sciences}, 15\penalty0 (2):\penalty0 424--454, 2022.

\bibitem[Wu et~al.(2024)Wu, Huang, and Zeng]{wu2024extrapolated}
Z.~Wu, C.~Huang, and T.~Zeng.
\newblock Extrapolated plug-and-play three-operator splitting methods for nonconvex optimization with applications to image restoration.
\newblock \emph{SIAM Journal on Imaging Sciences}, 17\penalty0 (2):\penalty0 1145--1181, 2024.

\bibitem[Xiao et~al.(2015)Xiao, Ng, and Yang]{xiao2015convergence}
J.~Xiao, M.~K.-P. Ng, and Y.-F. Yang.
\newblock On the convergence of nonconvex minimization methods for image recovery.
\newblock \emph{IEEE Transactions on Image Processing}, 24\penalty0 (5):\penalty0 1587--1598, 2015.

\bibitem[Xu and Adalsteinsson(2021)]{xu2021deformed2self}
J.~Xu and E.~Adalsteinsson.
\newblock Deformed2self: Self-supervised denoising for dynamic medical imaging.
\newblock In \emph{Medical Image Computing and Computer Assisted Intervention--MICCAI 2021: 24th International Conference, Strasbourg, France, September 27--October 1, 2021, Proceedings, Part II 24}, pages 25--35. Springer, 2021.

\bibitem[Yang and Toh(2021)]{yang2021inexact}
L.~Yang and K.-C. Toh.
\newblock Inexact {B}regman proximal gradient method and its inertial variant with absolute and relative stopping criteria.
\newblock \emph{arXiv preprint arXiv:2109.05690}, 2021.

\bibitem[Yang et~al.(2024)Yang, Hu, and Toh]{yang2024inexact}
L.~Yang, J.~Hu, and K.-C. Toh.
\newblock An inexact {B}regman proximal difference-of-convex algorithm with two types of relative stopping criteria.
\newblock \emph{arXiv preprint arXiv:2406.04646}, 2024.

\bibitem[Yin et~al.(2015)Yin, Lou, He, and Xin]{yin2015minimization}
P.~Yin, Y.~Lou, Q.~He, and J.~Xin.
\newblock Minimization of $\ell_{1-2}$ for compressed sensing.
\newblock \emph{SIAM Journal on Scientific Computing}, 37\penalty0 (1):\penalty0 536--563, 2015.

\bibitem[Ying et~al.(2009)Ying, Huang, and Campbell]{ying2009enhanced}
Y.~Ying, K.~Huang, and C.~Campbell.
\newblock Enhanced protein fold recognition through a novel data integration approach.
\newblock \emph{BMC Bioinformatics}, 10:\penalty0 1--18, 2009.

\bibitem[You et~al.(2019)You, Cao, Lu, Mao, and Wanga]{you2019denoising}
X.~You, N.~Cao, H.~Lu, M.~Mao, and W.~Wanga.
\newblock Denoising of {MR} images with {R}ician noise using a wider neural network and noise range division.
\newblock \emph{Magnetic Resonance Imaging}, 64:\penalty0 154--159, 2019.

\bibitem[Yuille and Rangarajan(2003)]{yuille2003concave}
A.~L. Yuille and A.~Rangarajan.
\newblock The concave-convex procedure.
\newblock \emph{Neural Computation}, 15\penalty0 (4):\penalty0 915--936, 2003.

\bibitem[Zhang et~al.(2017)Zhang, Zuo, Gu, and Zhang]{zhang2017learning}
K.~Zhang, W.~Zuo, S.~Gu, and L.~Zhang.
\newblock Learning deep cnn denoiser prior for image restoration.
\newblock In \emph{Proceedings of the IEEE Conference on Computer Vision and Pattern Recognition}, pages 3929--3938, 2017.

\bibitem[Zhang et~al.(2021)Zhang, Li, Zuo, Zhang, Van~Gool, and Timofte]{zhang2021plug}
K.~Zhang, Y.~Li, W.~Zuo, L.~Zhang, L.~Van~Gool, and R.~Timofte.
\newblock Plug-and-play image restoration with deep denoiser prior.
\newblock \emph{IEEE Transactions on Pattern Analysis and Machine Intelligence}, 44\penalty0 (10):\penalty0 6360--6376, 2021.

\end{thebibliography}
\end{document}